\newcommand{\Cv}{\mathrm{C_{v}}}
\newcommand{\Cp}{\mathrm{C_{p}}}
\newcommand{\pI}{\mathrm{p_I}}
\newcommand{\uI}{\mathrm{u_I}}
\newcommand{\vI}{\mathrm{v_I}}
\newcommand{\vecu}{\normalfont{\textbf{u}}}
\newcommand{\nc}{\normalfont{\textbf{d}}}
\newcommand{\D}{\normalfont{\textbf{D}}}
\newcommand{\V}{\normalfont{\textbf{v}}}
\newcommand{\n}{\normalfont{\textbf{n}}}
\newcommand{\BNM}{\scalebox{.8}{$\scriptscriptstyle\rm BNM$}}
\newcommand{\lb}{\llbracket}
\newcommand{\rb}{\rrbracket}
\newcommand{\press}{\mathrm{p}}
\newtheorem{remark}{\textbf{Remark}}[section]
\newtheorem{prop}{\textbf{Proposition}}[section]
\newtheorem{mythm}{\textbf{Theorem}}[section]
\newtheorem*{leib*}{\textit{Leibniz's rule}}
\newtheorem{definition}{Definition}[section]
\newcommand{\tD}{\Tilde{\D}}
\renewcommand{\norm}[1]{\left\lVert#1\right\rVert}
\newsavebox\myboxA
\newsavebox\myboxB
\newlength\mylenA
\newcommand*\xoverline[2][1.0]{%
    \sbox{\myboxA}{$\m@th#2$}%
    \setbox\myboxB\null
    \ht\myboxB=\ht\myboxA%
    \dp\myboxB=\dp\myboxA%
    \wd\myboxB=#1\wd\myboxA
    \sbox\myboxB{$\m@th\overline{\copy\myboxB}$}
    \setlength\mylenA{\the\wd\myboxA}
    \addtolength\mylenA{-\the\wd\myboxB}%
    \ifdim\wd\myboxB<\wd\myboxA%
       \rlap{\hskip 0.5\mylenA\usebox\myboxB}{\footnotesize{\usebox\myboxA}}%
    \else
        \hskip -0.5\mylenA\rlap{\usebox\myboxA}{\footnotesize{\hskip 0.5\mylenA\usebox\myboxB}}%
    \fi}
\newcommand{\equaltext}[1]{\ensuremath{\stackrel{\text{#1}}{=}}}
\crefname{mythm}{Theorem}{Theorems}
\crefname{remark}{Remark}{Remarks}
\crefname{lemma}{Lemma}{Lemmas}
\crefname{prop}{Proposition}{Propositions}
\crefname{table}{Table}{Tables}
\crefname{figure}{Figure}{Figures}
\definecolor{newcolor}{rgb}{.8,.349,.1}
\title{An entropy stable high-order discontinuous Galerkin spectral element method for the Baer-Nunziato two-phase flow model}
\author[1]{Fr\'ed\'eric Coquel}
\author[2]{Claude Marmignon}
\author[1,2,]{Pratik Rai,\corref{cor1}} 
\author[2]{Florent Renac}
\address[1]{CMAP, \'Ecole Polytechnique, Route de Saclay, 91128 Palaiseau Cedex, France}
\address[2]{DAAA, ONERA, Universit\'e Paris Saclay F-92322 Ch\^atillon, France}
\providecommand{\keywords}[1]
{
  \small	
  \textbf{Keywords. } #1
}
\providecommand{\AMS}[1]
{
  \small	
  \textbf{AMS subject classifications. } #1
}
\begin{document}

\maketitle
\vspace{-2cm}
\begin{abstract}
In this work we propose a high-order discretization of the Baer-Nunziato two-phase flow model (Baer and Nunziato, Int. J. Multiphase Flow, 12 (1986), pp. 861-889) with closures for interface velocity and pressure adapted to the treatment of discontinuous solutions, and stiffened gas equations of states. We use the discontinuous Galerkin spectral element method (DGSEM), based on collocation of quadrature and interpolation points (Kopriva and Gassner, J. Sci. Comput., 44 (2010), pp. 136-155). The DGSEM uses summation-by-parts (SBP) operators in the numerical quadrature for approximating the integrals over discretization elements (Carpenter et al., SIAM J. Sci. Comput., 36 (2014), pp. B835-B867; Gassner et al., J. Comput. Phys., 327 (2016), pp. 39-66). Here, we build upon the framework provided in (F. Renac, J. Comput. Phys., 382 (2019), pp. 1-36) for nonconservative hyperbolic systems to modify the integration over cell elements using the SBP operators and replace the physical fluxes with entropy conservative fluctuation fluxes from Castro et al. (SIAM J. Numer. Anal., 51 (2013), pp. 1371-1391), while we derive entropy stable fluxes applied at interfaces. This allows to establish a semi-discrete inequality for the cell-averaged physical entropy, while being high-order accurate. The design of the numerical fluxes also formally preserves the kinetic energy at the semi-discrete level. High-order integration in time is performed using strong stability-preserving Runge-Kutta schemes and we propose conditions on the numerical parameters for the positivity of the cell-averaged void fraction and partial densities. The positivity of the cell-averaged solution is extended to nodal values by the use of an a posteriori limiter. The high-order accuracy, nonlinear stability, and robustness of the present scheme are assessed through several numerical experiments in one and two space dimensions.
\end{abstract}

\keywords{Compressible two-phase flows, Baer-Nunziato model, entropy stable scheme, discontinuous Galerkin method, summation-by-parts}\\

\AMS{65M12, 65M70, 76T10}


%
%
\section{Introduction}
\label{Introduction}
Compressible two-phase flow models find extensive applications in engineering and physics. For instance, in the aerospace industry, they are used to model the flow of a mixture of liquid kerosene and air through the combustion chamber of jet engines, whereas, in the oil and gas industry they are used to model and simulate the extraction of oil through pipelines. Elsewhere, in the nuclear industry these models are used to study and simulate the flow inside a pressurized water reactor. One of the models commonly employed for the study of compressible two-phase flows is the  Baer-Nunziato model \cite{baer1986two}, which was originally proposed to describe the flow of a mixture of energetic granular material embedded in gaseous combustion product. This was later modified and adapted to the study of mixture of gas and liquid in \cite{saurel1999multiphase,coquel2002closure,drew2006theory,gallouet2004numerical}. In general, the model is a two-velocity, two-pressure, two-temperature system that describes two-phase flows in complete disequilibrium with respect to the chemical, mechanical, thermal, and thermodynamic processes. The interaction between the phases are governed by the presence of nonconservative products and zeroth order relaxation source terms. In this work we will neglect the source terms and limit ourselves to the convective part of the model. However, the homogeneous model under consideration is fairly general using closure laws for the interface velocity and pressure \cite{coquel2002closure,gallouet2004numerical} as well as stiffened gas equations of states (EOS) relevant for flows with both liquid and gas phases.

The homogeneous Baer-Nunziato model is a system of first order, nonlinear, nonconservative partial differential equations. The system is hyperbolic and may become weakly hyperbolic and even resonant. Hyperbolic systems may generate discontinuous solutions in finite time even for smooth initial data, however, in the case of nonconservative systems, the definition of the nonconservative product is not unique at discontinuities in the classical sense of distributions and leads to an ambiguity in the value of the product. Following the notion of the Rankine-Hugoniot conditions from conservation laws, the jump conditions for nonconservative systems may be generalized and may be either based on the choice of Lipschitz paths connecting separate states around discontinuities \cite{dal1995definition}, or based on the kinetic relations derived from the physical entropy \cite{berthon2012many}. Furthermore, uniqueness of the solution requires satisfying a nonlinear stability condition, for a given convex entropy function, called the entropy condition.

Numerical schemes that approximate hyperbolic systems should ideally recover admissible solutions by satisfying a discrete entropy condition \cite{lax1960systems,harten1983upstream}. This property of the numerical scheme is known as entropy stability. In the case of conservation laws, Tadmor \cite{tadmor1987numerical} provided the framework for entropy conservative and entropy stable numerical fluxes which allow for either conservation or dissipation  of entropy in space by three-point finite volume schemes. This was extended to nonconservative systems in \cite{pares2006numerical,castro2013entropy} by the use of fluctuation fluxes and the theory of connecting paths \cite{dal1995definition}. However, path-consistent schemes do not always converge to the right admissible solutions as the solutions are dependent on the choice of path which defines the jump relation and hence the viscous profile used to attain entropy stability \cite{abgrall2010comment,castro2008many,CHALONS2017592}. Entropy stable schemes using fluctuation fluxes to discretize nonconservative hyperbolic systems can be found in \cite{hiltebrand2018entropy,castro2017well,renac2019entropy} and we refer to \cite{lefloch2014numerical} for a review.

High-order accuracy of the numerical scheme is another exceedingly desirable quality that one seeks. The path-consistent framework can easily be adapted to high-order schemes by using reconstruction operators \cite{castro2006high}, central schemes \cite{castro2012central}, discontinuous Galerkin (DG) methods \cite{rhebergen2008discontinuous,fraysse2016upwind,chen2017entropy}, or ADER methods \cite{dumbser2009ader,dumbser2013high}. Among these the DG methods have gained substantial popularity over the years. The semi-discrete form of the DG method is proven to satisfy an entropy inequality for square entropy functions in scalar conservation laws \cite{jiang1994cell}, which was extended to symmetric systems in \cite{hou2007solutions}. 

In \cite{gassner2016split}, Gassner and coauthors have proposed an entropy stable high-order scheme for the compressible Euler equations using the discontinuous Galerkin spectral elements method (DGSEM), which was extended to general conservation laws in \cite{chen2017entropy}. They used the general framework for conservative elementwise flux differencing schemes \cite{fisher2013high} satisfying a semi-discrete entropy inequality for the cell-averaged entropy. The DGSEM is based on collocation of quadrature nodes with interpolation points using the Gauss-Lobatto quadrature rules \cite{kopriva2010quadrature}. The scheme was shown to satisfy the summation-by-parts (SBP) property \cite{gassner2013SBP-SAT} for the discrete operators which allows to take into account the numerical quadrature that approximates integrals compared to other techniques that require their exact evaluation \cite{jiang1994cell,hiltebrand2014entropy,hiltebrand2018entropy}. Such a form of the nodal DG method has found tremendous use in the development of entropy stable high-order schemes for the compressible Euler equations \cite{gassner2016split,chen2017entropy} and multicomponent Euler equations \cite{renac2020multicomp}, the shallow water equations \cite{wintermeyer2017entropy}, the magnetohydrodynamic (MHD) equations \cite{liu2018entropy,bohm2018entropy,winters2016affordable} and gradient flows \cite{sun2018entropy}. In the case of nonconservative systems, a semi-discrete framework was proposed in \cite{renac2019entropy} based on the DGSEM formulation that proves to be entropy stable and high-order accurate.

In the present work we utilize the framework from \cite{renac2019entropy} and focus on the design of a high-order entropy stable scheme for the Baer-Nunziato model. This framework is here extended to systems that contain both space derivatives in divergence form and nonconservative products, which is based on a direct generalization of the frameworks of entropy stable finite volume schemes for conservation laws \cite{tadmor1987numerical} and for nonconservative systems \cite{castro2013entropy}. Such generalization has already been proposed for balance laws in \cite{castro2013entropy}. This generalization allows the design of discretizations that reduce to conservative schemes using conservative numerical fluxes when the nonconservative products vanish as it is the case away from material fronts in the Baer-Nunziato model. Using this framework, we modify the integration over cell elements using the SBP operator and replace the physical fluxes with two-point entropy conservative fluxes in fluctuation form \cite{castro2013entropy}, while we use entropy stable fluxes at the cell interfaces \cite{castro2013entropy,renac2019entropy}. The entropy conservative fluxes are derived by using the entropy condition \cite{castro2013entropy}, and we add upwind-type dissipation as advocated in \cite{ismail2009affordable} to obtain the entropy stable numerical fluxes. The scheme is also kinetic energy preserving at the semi-discrete level. The present method is introduced in one space dimension for the sake of clarity and we provide details on its extension to multiple space dimensions on Cartesian meshes in the appendices. The extension of the DGSEM to quadrangles and hexahedra is direct and based on tensor products of one-dimensional basis functions and quadrature rules.

We then focus on high-order integration in time for which we rely on strong stability-preserving explicit Runge-Kutta schemes \cite{shu1988efficient,gottlieb2001strong} which are defined as convex combinations of first-order schemes and keep their properties under some condition on the time step. We analyze the properties of the fully discrete one-step scheme and derive explicit conditions on the time step and numerical parameters to maintain the positivity of the cell-averaged partial densities and a maximum principle for the cell-averaged void fraction. Positivity of the solution is then enforced at nodal values by the use of a posteriori limiters \cite{zhang2010positivity, zhang2010maximum}. Numerical tests in one and two space dimensions are finally performed to assess the properties of the present scheme. 

The plan of the paper is as follows. \Cref{Sec: Baer-Nunziato model} describes the Baer-Nunziato model and highlights its physical and mathematical properties. In \cref{sec: DGSEM framework}, we introduce the DGSEM framework and the semi-discrete scheme. The derivation of entropy conservative and entropy stable numerical fluxes are outlined in \cref{sec: fluctuation fluxes}. The properties of the scheme and the limiters are described in \cref{sec: properties of the scheme}. The results of the numerical experiments in one space dimension are presented in \cref{sec: Numerical tests}, while those in two space dimensions are presented in \cref{sec: extension to multi-space dim}. Finally, concluding remarks on the present work are provided in \cref{sec: Conclusion}.

%
%
\section{Baer-Nunziato model}\label{Sec: Baer-Nunziato model}

We consider the Cauchy problem for the homogeneous Baer-Nunziato two-phase flow model in one space dimension \cite{andrianov2004riemann,dumbser2011simple,tokareva2010hllc,ambroso2012godunov}:
\begin{linenomath*}
\begin{subequations} \label{Eqn: Cauchy prob}
\begin{alignat}{2}
    \partial_t\vecu + \partial_x\textbf{f}(\vecu) +\textbf{c}(\vecu)\partial_x\vecu &= 0, \quad && x\in\mathbb{R},t>0,\label{Eqn: BNM} \\
    \vecu(x,0) &= \vecu_0(x), \label{Eqn: ic_bnm}\quad && x \in \mathbb{R},
\end{alignat}
\end{subequations}
\end{linenomath*}
where
\begin{linenomath*}
\begin{equation} \label{Eqn: BNM vectors}
    \vecu := 
    \begin{pmatrix}
    \alpha_i\\
    \alpha_i\rho_i\\
    \alpha_i\rho_iu_i\\
    \alpha_i\rho_iE_i
    \end{pmatrix}, \quad
    \textbf{f}(\vecu) := 
    \begin{pmatrix}
    0\\
    \alpha_i\rho_iu_i\\
    \alpha_i(\rho_iu^2_i+\mathrm{p}_i)\\
    \alpha_iu_i(\rho_iE_i+\mathrm{p}_i)
    \end{pmatrix},\quad
    \textbf{c}(\vecu)\partial_x\vecu :=
    \begin{pmatrix}
    \uI\\
    0\\
    -\pI\\
    -\pI\uI
    \end{pmatrix}\partial_x\alpha_i, \quad i=1,2.
\end{equation}
\end{linenomath*}

The phase densities are $\rho_i$, the velocities are $u_i$, and the specific total energies are $E_i=e_i + u^2_i/2$ where $e_i$ is the specific internal energy and $i=1,2$ refers to the $i$th phase. The void fraction of each individual phase is denoted as $\alpha_i$ and we assume that both satisfy the saturation condition 
\begin{linenomath*}
\begin{equation}\label{Eqn: void fraction assumption}
    \alpha_1+\alpha_2=1.
\end{equation}
\end{linenomath*}

In one space dimension, the model is a system of seven equations including the evolution equations for the mass, momentum and energy of each phase, along with a transport  equation for the void fraction. Note that the vector of unknowns, $\vecu$, has seven components even though the abstract notation in (\cref{Eqn: BNM vectors}) seems to indicate eight equations. This is due to the saturation condition (\cref{Eqn: void fraction assumption}) which allows the void fraction equation to be expressed in terms of either phases. The solution $\vecu$ belongs to the phase space
\begin{linenomath*}
\begin{equation}\label{eqn: phase_space}
     \Omega_{\BNM} = \big\{\vecu\in\mathbb{R}^7:\, 0<\alpha_i<1,\, \rho_i>0,\, u_i\in\mathbb{R},\, e_i>0,\, i=1,2 \big\}.
\end{equation}
\end{linenomath*}

Space variations of the physical quantities are governed by the flux function $\textbf{f}(\vecu):\Omega_{\BNM}\rightarrow\mathbb{R}^7$ and the nonconservative product $\textbf{c}(\vecu)\partial_x\vecu$ which couples the phases and hinders the system (\cref{Eqn: BNM}) to be written in divergence form. Note that if $\alpha_i$ is uniform in space, the phases decouple into separate systems of compressible Euler equations. 

The pressure of each phase $\press_i$ is related to the density and internal energy through a stiffened gas EOS:
\begin{linenomath*}
\begin{equation}\label{Eqn: EOS}
    \mathrm{p}_i(\rho_i,e_i)=(\gamma_i-1)\rho_ie_i-\gamma_i\mathrm{p}_{\infty,i},
\end{equation}
\end{linenomath*}
where $\gamma_i=\Cp_i/\Cv_i>1$ is the ratio of specific heats of phase $i$ and $\mathrm{p}_{\infty,i} \geqslant 0$ are some constants. System (\cref{Eqn: BNM}) is supplemented with closure laws for the interfacial velocity and pressure, $\uI$ and $\pI$, respectively, that govern the exchange of information at the interface of the two phases. In this work, we use definitions of the interfacial velocity and pressure based on convex combinations of the velocities and pressures of the two phases \cite{coquel2002closure,gallouet2004numerical} and adapted to the treatment of discontinuous solutions:
\begin{linenomath*}
\begin{subequations}\label{eqn: intref_var}
\begin{align}
    \uI &:= \beta u_1 + (1-\beta) u_2, \label{eqn: interf. vel}\\
    \pI &:= \mu \press_1 + (1-\mu)\press_2, \label{eqn: interf. press}
\end{align}
\end{subequations}
\end{linenomath*}
where the convex weights are
\begin{linenomath*}
\begin{equation}\label{eqn: intref_var convex weights}
    \beta = \frac{\chi \alpha_1\rho_1}{\chi \alpha_1\rho_1 + (1-\chi)\alpha_2\rho_2},\quad
    \mu = \frac{(1-\beta)T_2}{\beta T_1 + (1-\beta)T_2}, \quad \chi \in \{0,\tfrac{1}{2}, 1\},
\end{equation}
\end{linenomath*}
and $T_i$ denotes the temperature of the $i$th phase.

Under this particular choice for the closures (\cref{eqn: intref_var}) and (\cref{eqn: intref_var convex weights}), the interfacial velocity $\uI$ corresponds to an eigenvalue for (\cref{Eqn: Cauchy prob}) associated to a linearly degenerate (LD) characteristic field which allows to close the jump relation across material interfaces that are associated to the nonconservative terms in (\cref{Eqn: BNM}). The possible choices for $\chi$ in (\cref{eqn: intref_var convex weights}) are the ones that allow to obtain a conservative equation for the physical entropy for smooth solutions. Physical systems such as the Baer-Nunziato model are indeed naturally equipped with a physical entropy function. The phasic entropies read 
\begin{linenomath*}
\begin{equation}\label{eqn: physical entropy}
    s_i(\rho_i,\theta_i) =-\Cv_i\big(\ln\theta_i+(\gamma_i-1)\ln \rho_i\big), \quad i=1,2,
\end{equation}
\end{linenomath*}
with $\theta_i=\tfrac{1}{T_i}$ the inverse of temperature, and obey the second law of thermodynamics. Smooth solutions of (\cref{Eqn: Cauchy prob}) satisfy
\begin{linenomath*}
\begin{equation}\label{eqn: interf press entropy condition}
    \partial_t \sum^2_{i=1} \alpha_i\rho_is_i + \partial_x \sum^2_{i=1} \alpha_i\rho_is_iu_i = \sum^2_{i=1}(\pI-\press_i)(\uI-u_i)\theta_i \partial_x \alpha_i,
\end{equation}
\end{linenomath*}
which indeed vanishes for the closure of interfacial quantities (\cref{eqn: intref_var}) and (\cref{eqn: intref_var convex weights}):
\begin{linenomath*}
\begin{equation}\label{eqn: partialx alphai in entropy equality}
    \sum^2_{i=1}(\pI-\press_i)(\uI-u_i)\theta_i \partial_x \alpha_i = 0.
\end{equation}
\end{linenomath*}
In the case of non-smooth solutions, such as shocks, admissible weak solutions must satisfy a nonlinear stability condition for the convex entropy function $\eta(\vecu):=-\sum^2_{i=1}\alpha_i\rho_is_i$ and entropy flux $q(\vecu):=-\sum^2_{i=1}\alpha_iu_i\rho_is_i$:
\begin{linenomath*}
\begin{equation}\label{Eqn: entropy ineq}
    \partial_t\eta(\vecu)+\partial_xq(\vecu) \leqslant 0.
\end{equation}
\end{linenomath*}

System (\cref{Eqn: BNM}) can also be written in quasi-linear form as
\begin{linenomath*}
\begin{equation}\label{Eqn: compact BNM}
    \partial_t\vecu+\textbf{A}(\vecu)\partial_x\vecu = 0, \quad x\in\mathbb{R},t>0,
\end{equation}
\end{linenomath*}
where $\textbf{A}:\Omega_{\BNM}\ni\vecu\mapsto\textbf{A}(\vecu)=\textbf{f}'(\vecu)+\textbf{c}(\vecu)\in\mathbb{R}^{7\times7}$ is a matrix-valued function for smooth solutions of (\cref{Eqn: Cauchy prob}). The system (\cref{Eqn: compact BNM}) is hyperbolic over the phase space (\cref{eqn: phase_space}) and $\textbf{A(\vecu)}$ admits real eigenvalues 
\begin{linenomath*}
\begin{equation}\label{Eqn: eigenvalues}
        \begin{aligned}
            \lambda_1(\vecu) = u_1-c_1,\; \lambda_2(\vecu) = u_2-c_2,\; \lambda_3(\vecu) = u_1,\; \lambda_4(\vecu) = \uI,\; \lambda_5(\vecu) = u_2,\; \lambda_6(\vecu) = u_1+c_1,\; \lambda_7(\vecu) = u_2+c_2,
        \end{aligned}
    \end{equation}
\end{linenomath*}
associated to linearly independent eigenvectors. Here $c_i(\rho_i,e_i)^2=\gamma_i(\gamma_i-1)(\rho_ie_i-p_{\infty,i})/\rho_i$ is the speed of sound for the EOS (\cref{Eqn: EOS}). Observe, in (\cref{Eqn: eigenvalues}), that $\lambda_3,\lambda_4$ and $\lambda_5$ are associated to LD fields, whereas the others ones, $\lambda_1,\lambda_2,\lambda_6$ and $\lambda_7$, are associated to genuinely nonlinear (GNL) fields. Note that (\cref{Eqn: compact BNM}) is only weakly hyperbolic when $\uI$ is equal to one transport velocity, $u_1$ or $u_2$, for $\chi = 1$ or $0$ in (\cref{eqn: intref_var convex weights}). In this work we assume that (\cref{Eqn: compact BNM}) is hyperbolic and well-posed and exclude resonance phenomena \cite{coquel2017positive}:
\begin{linenomath*}
\begin{equation}\label{eqn: resonance}
    \alpha_i \neq 0, \quad \uI \neq u_i\pm c_i, \quad i = 1,2.
\end{equation}
\end{linenomath*}
When resonance occurs, the system turns degenerate as the right eigenvectors no longer span the whole phase space (\cref{eqn: phase_space}).

During the remaining course of this work we will be interested in discretizing the initial value problem (\cref{Eqn: Cauchy prob}). We will discretize the system in space using the DGSEM framework from \cite{renac2019entropy} and propose numerical fluxes that maintain the nonlinear stability condition (\cref{Eqn: entropy ineq}) at the semi-discrete level in addition to several other properties.

%
%
\section{Space discretization with the DGSEM} \label{sec: DGSEM framework}
We discretize the physical domain using a grid $\Omega_h:=\cup_{j\in\mathbb{Z}}\kappa_j$ containing cells $\kappa_j=[x_{j-\frac{1}{2}},x_{j+\frac{1}{2}}]$, $x_{j+\frac{1}{2}}=jh$ with cell size $h>0$, see \cref{mesh}. Here the mesh is assumed to be uniform without loss of generality.

\begin{figure}
    \centering
    \includegraphics[scale=0.35]{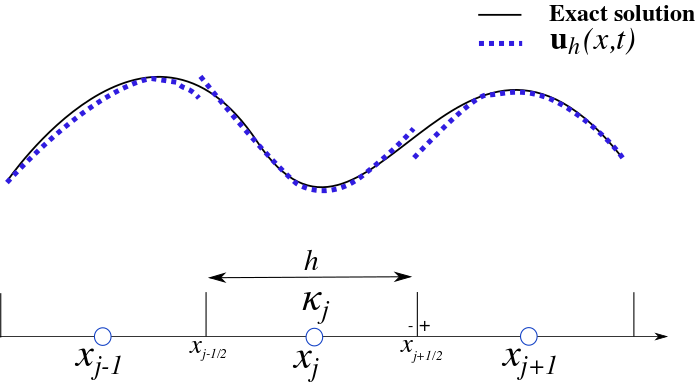}
    \caption{A one-dimensional representation of the mesh with cells $\kappa_j$ of size $h$. The left and right interfaces of cell $\kappa_j$ are at $x_{j\mp\frac{1}{2}}$, and representation of the left and right traces at $x_{j+\frac{1}{2}}$.}
    \label{mesh}
\end{figure}

\subsection{Numerical solution}\label{Numerical soln}
We look for approximate solutions in the function space of piecewise polynomials
\begin{linenomath*}
\begin{equation}\label{eqn: function space}
\mathcal{V}^p_h = \big\{v_h\in L^2(\Omega_h): v_h|_{\kappa_j} \in \mathcal{P}_p(\kappa_j), \kappa_j \in \Omega_h\big\},    
\end{equation}
\end{linenomath*}
where $\mathcal{P}_p(\kappa_j)$ denotes the space of polynomials of degree at most $p$ in the element $\kappa_j$. The approximate solution to (\cref{Eqn: Cauchy prob}) is sought as
\begin{linenomath*}
\begin{equation}
    \vecu_h(x,t) = \sum^p_{k=0} \phi^k_j(x) \textbf{U}^k_j(t) \quad \forall x\in \kappa_j, \kappa_j\in \Omega_h, t > 0,
\end{equation}
\end{linenomath*}
where the subset $(\phi^0_j,...,\phi^p_j)$ constitutes a basis of $\mathcal{V}^p_h$ restricted onto $\kappa_j$ and $\textbf{U}^{0\leqslant k\leqslant p}_{j}$ are the associated degrees of freedom (DOFs). Here we use the Lagrange interpolation polynomials $\ell_{0\leqslant k\leqslant p}$ associated to the Gauss-Lobatto nodes over the reference element $I=[-1,1]$: $-1=s_0<s_1<\cdots<s_p=1$. The basis functions thus satisfy the relation
\begin{linenomath*}
\begin{equation}\label{lagrange poly}
    \ell_k(s_l)=\delta_{kl}, \quad 0\leqslant k,l \leqslant p,
\end{equation}
\end{linenomath*}
where $\delta_{kl}$ is the Kronecker symbol. The basis functions with support in a given element $\kappa_j$ are written as $\phi^k_j(x)=\ell_k(\sigma_j(x))$, where $\sigma_j(x) = 2(x-x_j)/h$ and $x_j=(x_{j+\frac{1}{2}}+x_{j-\frac{1}{2}})/2$ denotes the center of the element.

The DOFs thus correspond to the point values of the solution: given $0\leqslant k\leqslant p$, $j \in \mathbb{Z}$, and $t \geqslant 0$, we have $\vecu_h(x^k_j,t)=\textbf{U}^k_j(t)$ for $x^k_j=x_j+s_kh/2$. Likewise, the left and right traces of the solution at the element interfaces are $\vecu_h(x^-_{j+1/2},t)=\textbf{U}^p_{j}(t)$ and $\vecu_h(x^+_{j-1/2},t)=\textbf{U}^0_{j}(t)$, respectively. The integrals over the elements are approximated using the Gauss-Lobatto quadrature rule, where the points are collocated with the interpolation points of the numerical solution:
\begin{linenomath*}
\begin{equation}\label{quadrature}
    \int_{\kappa_j}f(x)dx \approx \frac{h}{2}\sum^p_{l=0} \omega_lf(x^l_j),
\end{equation}
\end{linenomath*}
where $\omega_l > 0$, with $\sum^{p}_{l=0}\omega_l = 2$, are the quadrature weights and $x^l_j$ the quadrature points. This allows to define the discrete inner product in the element $\kappa_j$ as
\begin{linenomath*}
\begin{equation}\label{eq:1d_inner_product}
    \langle f,g \rangle^p_j := \frac{h}{2}\sum^p_{l=0} \omega_lf(x^l_j)g(x^l_j).
\end{equation}
\end{linenomath*}

We also introduce the discrete difference matrix 
\begin{linenomath*}
\begin{equation}\label{diff matrix}
    D_{kl}=\ell'_l(s_k)=\frac{h}{2}d_x\phi^l_j(x^k_j), \quad 0\leqslant k,l \leqslant p.
\end{equation}
\end{linenomath*}
This operator satisfies the summation-by-parts property, as noticed in \cite{kopriva2010quadrature},
\begin{linenomath*}
\begin{equation}\label{eqn: SBP}
    \omega_k D_{kl} + \omega_l D_{lk} =\delta_{kp}\delta_{lp}-\delta_{k0}\delta_{l0} \quad \forall 0 \leqslant k,l \leqslant p,
\end{equation}
\end{linenomath*}
which is the discrete analogue of the following integration-by-parts
\begin{linenomath*}
\begin{equation}
    \int_{\kappa_j} \phi^k_j(x)d_x\phi^l_j(x)dx + \int_{\kappa_j} d_x\phi^k_j(x)\phi^l_j(x)dx = \big[\phi^k_j(x)\phi^l_j(x)\big]^{x^-_{j+1/2}}_{x^+_{j-1/2}},
\end{equation}
\end{linenomath*}
since the Gauss-Lobatto quadrature rule is exact for polynomial integrands up to degree $2p-1$. Furthermore the property $\sum^p_{l=0} \ell_l \equiv 1$ implies
\begin{linenomath*}
\begin{equation}\label{eq:sum_Dkl_vanishes}
    \sum^p_{l=0} D_{kl}= 0 \quad \forall 0 \leqslant k \leqslant p.
\end{equation}
\end{linenomath*}

\subsection{Semi-discrete form}\label{Semi-discrete form}
The semi-discrete DGSEM formulation of (\cref{Eqn: BNM}), see \cite{rhebergen2008discontinuous,franquet2012runge,renac2019entropy}, reads: find $\vecu_h$ in $(\mathcal{V}^p_h)^7$ such that
\begin{linenomath*}
\begin{equation}\label{semi-discrete eqn}
\begin{aligned}
    \int_{\Omega_h}v_h\partial_t\vecu_h dx + \int_{\Omega_h}v_h\big(\partial_x{\bf f}(\vecu_h)+\textbf{c}(\vecu_h)\partial_x\vecu_h\big)dx &+ \sum_{j\in\mathbb{Z}} v_h\big(x^-_{j+\frac{1}{2}}\big) \D^-\big(\textbf{U}^p_j(t),\textbf{U}^0_{j+1}(t)\big)\\
    &+ \sum_{j\in\mathbb{Z}} v_h\big(x^+_{j-\frac{1}{2}}\big) \D^+\big(\textbf{U}^p_{j-1}(t),\textbf{U}^0_{j}(t)\big) =0 \quad \forall v_h \in \mathcal{V}^p_h,t > 0,
\end{aligned}
\end{equation}
\end{linenomath*}
where $\D^{\pm}(\cdot,\cdot)$ are the numerical fluxes at the interfaces in fluctuation form which will be defined below.

Upon substituting $v_h$ for the Lagrange interpolation polynomials $\phi^k_j(x)=\ell_k(\sigma_j(x))$, defined by (\cref{lagrange poly}), and using the quadrature rule (\cref{quadrature}) to approximate the volume integrals, (\cref{semi-discrete eqn}) becomes
\begin{linenomath*}
\begin{equation}\label{DG semi-discrete}
    \frac{\omega_kh}{2}\frac{d\textbf{U}^k_j}{dt}+\omega_k\sum^{p}_{l=0}D_{kl} \big(\textbf{f}(\textbf{U}^l_j)+\textbf{c}(\textbf{U}^k_j)\textbf{U}^l_j\big)+\delta_{kp}\D^-(\textbf{U}^p_j,\textbf{U}^0_{j+1}) +\delta_{k0}\D^+(\textbf{U}^p_{j-1},\textbf{U}^0_{j}) = 0\quad \forall j\in\mathbb{Z}, 0\leqslant k\leqslant p,
\end{equation}
\end{linenomath*}
along with the projection of the initial condition (\cref{Eqn: ic_bnm}) on the function space:
\begin{linenomath*}
\begin{equation}
    \textbf{U}^k_j(0) = \vecu_0(x^k_j) \quad \forall j\in\mathbb{Z},0\leqslant k\leqslant p.
\end{equation}
\end{linenomath*}

\subsection{Numerical fluxes} \label{Numerical fluxes}
We rely on numerical fluxes in fluctuation form \cite{pares2006numerical} that satisfy the properties of entropy conservation and entropy stability for the semi-discrete form (\cref{DG semi-discrete}). Here we recall their definition from \cite{castro2013entropy}.
\begin{definition}\label{Defn: entropy cons, stable}
Let $\normalfont{\D}^{\pm}_{ec}$ be Lipschitz continuous and consistent numerical fluxes in fluctuation form, $\D^\pm_{ec}(\vecu,\vecu)=0$ for all $\vecu$ in $\Omega_{\BNM}$, and $(\eta,q)$ be an entropy-entropy flux pair for (\cref{Eqn: BNM}), then $\normalfont{\D}^{\pm}_{ec}$ are said to be entropy conservative if they satisfy the following relation: 
\begin{linenomath*}
\begin{equation}\label{Eqn: entropy conserv defn nc}
    \V(\vecu^-)^\top\D^-_{ec}(\vecu^-,\vecu^+)+\V(\vecu^+)^\top\D^+_{ec}(\vecu^-,\vecu^+)=q(\vecu^+)-q(\vecu^-) \quad\forall \vecu^\pm \in \Omega_{\BNM},
\end{equation}
\end{linenomath*}
where $\V(\vecu^\pm) := \eta'(\vecu^\pm)$ denote the entropy variables. 
\end{definition}

In this work we look for entropy conservative fluxes with the following form
\begin{linenomath*}
\begin{subequations}\label{Eqn: fluctuation flux}
\begin{align}
    \D^-_{ec}(\vecu^-,\vecu^+) &= \textbf{h}(\vecu^-,\vecu^+)-\textbf{f}(\vecu^-)+\normalfont{\textbf{d}}^-(\vecu^-,\vecu^+),\label{Dec1}\\
    \D^+_{ec}(\vecu^-,\vecu^+) &= \textbf{f}(\vecu^+)-\textbf{h}(\vecu^-,\vecu^+)+\normalfont{\textbf{d}}^+(\vecu^-,\vecu^+),\label{Dec2}
\end{align}
\end{subequations}
\end{linenomath*}
where $\textbf{h}(\vecu^-,\vecu^+)$ is a numerical flux that approximates the traces of the physical fluxes, $\textbf{f}(\vecu^\pm)$, and $\textbf{d}^\pm(\vecu^-,\vecu^+)$ are fluctuation fluxes for the discretization of the nonconservative term in (\cref{Eqn: BNM}). The numerical fluxes satisfy the consistency conditions:
\begin{linenomath*}
\begin{equation}\label{eq:consistency_h_d}
     \normalfont{\textbf{h}}(\vecu,\vecu) = \textbf{f}(\vecu),\quad \textbf{d}^\pm(\vecu,\vecu)=0 \quad \forall \vecu \in \Omega_{\BNM}.
\end{equation}
\end{linenomath*}

The condition for entropy conservation now becomes
\begin{linenomath*}
\begin{equation}\label{Eqn: entropy conserv defn}
    \V(\vecu^-)^\top{\bf d}^-(\vecu^-,\vecu^+)+\V(\vecu^+)^\top{\bf d}^+(\vecu^-,\vecu^+) + \lb\V^\top{\bf f} -q \rb = \textbf{h}(\vecu^-,\vecu^+)\lb \V\rb \quad\forall \vecu^\pm \in \Omega_{\BNM},
\end{equation}
\end{linenomath*}
where $\lb a\rb = a^+ - a^-$ denotes the jump operator. This relation is a direct generalization of entropy conditions in \cite{tadmor1987numerical,castro2013entropy} for systems with conservative and nonconservative terms.

Furthermore, we seek entropy stable fluxes by adding dissipation to the entropy conservative fluxes as advocated in \cite{ismail2009affordable} for conservation laws:
\begin{linenomath*}
\begin{equation}\label{Eqn: entropy stable defn}
    \D^\pm(\vecu^-,\vecu^+):= \D^\pm_{ec}(\vecu^-,\vecu^+) \pm \D_\nu(\vecu^-,\vecu^+),
\end{equation}
\end{linenomath*}
where $\D_\nu(\vecu^-,\vecu^+)$ is a numerical dissipation that satisfies consistency and entropy dissipation:
\begin{linenomath*}
\begin{equation} \label{Eqn: entropy constraint}
   \normalfont{\D}_\nu(\vecu,\vecu)=0, \quad \lb\V(\vecu)\rb^\top \textbf{D}_\nu(\vecu^-,\vecu^+) \geqslant 0 \quad \forall \vecu,\vecu^\pm\in\Omega_{\BNM}.
\end{equation}
\end{linenomath*}

Observe, in the semi-discrete form (\cref{DG semi-discrete}), that the discrete volume integral does not bear proper constraints towards entropy conservation or dissipation. In other words we cannot control the sign of its scalar product with the entropy variables. Therefore, we modify the volume integral and replace it with entropy conservative fluctuation fluxes, as in \cite{renac2019entropy}. The semi-discrete scheme now reads
\begin{linenomath*}
\begin{equation}\label{eqn: modified DG semi-discrete}
    \frac{\omega_kh}{2}\frac{d\textbf{U}^k_j}{dt}+\textbf{R}^k_j(\vecu_h) = 0, 
\end{equation}
\end{linenomath*}
where
\begin{linenomath*}
\begin{equation}\label{eqn: residual}
    \textbf{R}^k_j(\vecu_h) = \omega_k \sum^p_{l=0}D_{kl}\tD(\textbf{U}^k_j,\textbf{U}^l_j) + \delta_{kp}\D^-(\textbf{U}^p_j,\textbf{U}^0_{j+1}) +\delta_{k0}\D^+(\textbf{U}^p_{j-1},\textbf{U}^0_{j}),
\end{equation}
\end{linenomath*}
and
\begin{linenomath*}
\begin{subequations}\label{tilde D}
 \begin{align}
    \tD(\vecu^-,\vecu^+) &:= \D^-_{ec}(\vecu^-,\vecu^+)-\D^+_{ec}(\vecu^+,\vecu^-), \label{tilde Da} \\
		&\overset{(\cref{Eqn: fluctuation flux})}{=} \textbf{h}(\vecu^-,\vecu^+) + \textbf{h}(\vecu^+,\vecu^-) + \nc^-(\vecu^-,\vecu^+) - \nc^+(\vecu^+,\vecu^-). \label{tilde Db}
 \end{align}
\end{subequations}
\end{linenomath*}

Note that in the above relation we do not require $\textbf{h}$ to be symmetric as in \cite{fisher2013high,chen2017entropy}, but rather use the symmetrizer $\tfrac{1}{2}\big(\textbf{h}(\vecu^-,\vecu^+) + \textbf{h}(\vecu^+,\vecu^-)\big)$.

\subsection{Properties of the semi-discrete scheme} \label{prop semi-discrete}
The modification to the integrals over cell elements in (\cref{eqn: residual}) allows for an entropy stable numerical scheme that preserves the high-order accuracy of the scheme. Below we generalize the results from \cite{renac2019entropy} to systems that contain both conservative and nonconservative terms. 

\begin{mythm}\label{theorem entropy ineq}
Let $\normalfont{\D}^\pm$ be consistent and entropy stable fluctuation fluxes (\cref{Eqn: entropy stable defn}) and (\cref{Eqn: entropy constraint}) in (\cref{eqn: residual}) and $\tD$ defined by (\cref{tilde Db}) with consistent and entropy conservative fluctuation fluxes (\cref{Eqn: entropy conserv defn}) and (\cref{eq:consistency_h_d}). Then, the semi-discrete numerical scheme (\cref{eqn: modified DG semi-discrete}) satisfies an entropy inequality for the entropy-entropy flux pair $(\eta,q)$ in (\cref{Eqn: entropy ineq}):
\begin{linenomath*}
\begin{equation}\label{semi-discrete entropy ineq}
    h\frac{d\langle\eta(\vecu_h)\rangle_j}{dt}+Q(\normalfont{\textbf{U}}^p_j,\normalfont{\textbf{U}}^0_{j+1})-Q(\normalfont{\textbf{U}}^p_{j-1},\normalfont{\textbf{U}}^0_{j})\leqslant 0,
\end{equation}
\end{linenomath*}
where $\displaystyle\langle\eta(\vecu_h)\rangle_j(t)=\sum^p_{k=0}\frac{\omega_k}{2}\eta\big(\textbf{U}^k_j(t)\big)$ is the cell averaged entropy and the conservative numerical entropy flux is defined by
\begin{linenomath*}
\begin{equation}
    Q(\vecu^-,\vecu^+) = \frac{q(\vecu^-)+q(\vecu^+)}{2}+\frac{1}{2}\V(\vecu^-)^\top\D^-(\vecu^-,\vecu^+)-\frac{1}{2}\V(\vecu^+)^\top\D^+(\vecu^-,\vecu^+).
\end{equation}
\end{linenomath*}

Further assuming that ${\bf d}^\pm$ in (\cref{tilde Db}) have the form
\begin{linenomath*}
\begin{subequations}\label{eqn: high-order approximation proof}
\begin{align}
\normalfont{\bf d}^{\pm}(\vecu^-,\vecu^+) &= \mathcal{C}^\pm(\vecu^-,\vecu^+)\lb\vecu\rb, \label{eqn: high-order approximation proof a}\\
\mathcal{C}(\vecu^-,\vecu^+) &:= \mathcal{C}^+(\vecu^-,\vecu^+) + \mathcal{C}^-(\vecu^-,\vecu^+), \label{eqn: high-order approximation proof b}\\
\mathcal{C}(\vecu^-,\vecu^+) + \mathcal{C}(\vecu^+,\vecu^-) &= \normalfont{\textbf{c}}(\vecu^-)+\textbf{c}(\vecu^+), \label{eqn: high-order approximation proof c}\\
\mathcal{C}(\vecu,\vecu) &=\normalfont{\textbf{c}}(\vecu), \label{eqn: high-order approximation proof d}
\end{align}
\end{subequations}
\end{linenomath*}
where $\lb \vecu\rb = \vecu^+-\vecu^-$, then semi-discrete DGSEM (\cref{eqn: modified DG semi-discrete}) is a high-order approximation in space of smooth solutions for the nonconservative system (\cref{Eqn: BNM}) that satisfies
\begin{linenomath*}
\begin{equation}\label{Eqn: cell avg conserv}
    h\frac{d\langle\vecu_h\rangle_j}{dt} + \langle\textbf{c}(\vecu_h),d_x\vecu_h\rangle^p_j + \D^-(\textbf{U}^p_j,\textbf{U}^0_{j+1})+{\bf f}(\textbf{U}^p_j)+\D^+(\textbf{U}^p_{j-1},\textbf{U}^0_{j})-{\bf f}(\textbf{U}^0_j) = 0,
\end{equation}
\end{linenomath*}
for the cell averaged solution
\begin{linenomath*}
\begin{equation}\label{cell avg defn}
    \langle\vecu_h\rangle_j(t):=\frac{1}{h}\int_{\kappa_j}\vecu_h(x,t)dx=\frac{1}{2}\sum^p_{k=0}\omega_k\textbf{U}^k_j(t).
\end{equation}
\end{linenomath*}
\end{mythm}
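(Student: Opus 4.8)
The plan is to test the semi-discrete scheme (\cref{eqn: modified DG semi-discrete}) with the entropy variables and sum over the collocation nodes with the quadrature weights. Concretely, I would take the scalar product of (\cref{eqn: modified DG semi-discrete}) with $\V(\textbf{U}^k_j)$ and sum over $0\leqslant k\leqslant p$. Since $\V=\eta'$, the time-derivative term collapses by the chain rule into $h\,d\langle\eta(\vecu_h)\rangle_j/dt$, which is the first term of (\cref{semi-discrete entropy ineq}). It then remains to control $\sum_k\V(\textbf{U}^k_j)^\top\textbf{R}^k_j$, which I would split into the volume double sum $\sum_{k,l}\omega_kD_{kl}\,\V(\textbf{U}^k_j)^\top\tD(\textbf{U}^k_j,\textbf{U}^l_j)$ and the two surface terms carried by the $\delta_{kp}$ and $\delta_{k0}$ factors.

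The core of the proof is the reduction of the volume term to a boundary entropy flux. I would duplicate the double sum, relabel $k\leftrightarrow l$ in one copy, and apply the SBP property (\cref{eqn: SBP}) to trade $\omega_lD_{lk}$ for $-\omega_kD_{kl}$ up to the boundary symbol $\delta_{kp}\delta_{lp}-\delta_{k0}\delta_{l0}$. This recasts the volume term as $\tfrac12\sum_{k,l}\omega_kD_{kl}(\V(\textbf{U}^k_j)^\top\tD(\textbf{U}^k_j,\textbf{U}^l_j)-\V(\textbf{U}^l_j)^\top\tD(\textbf{U}^l_j,\textbf{U}^k_j))$ plus a diagonal remainder that vanishes by consistency, $\tD(\vecu,\vecu)=0$. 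Expanding $\tD$ through (\cref{tilde Da}) and applying the two-argument entropy conservation identity (\cref{Eqn: entropy conserv defn nc}) to both ordered pairs $(\textbf{U}^k_j,\textbf{U}^l_j)$ and $(\textbf{U}^l_j,\textbf{U}^k_j)$ collapses the parenthesis to $2(q(\textbf{U}^l_j)-q(\textbf{U}^k_j))$. Discarding the $q(\textbf{U}^k_j)$ part by the row-sum identity (\cref{eq:sum_Dkl_vanishes}) and telescoping the remainder with $\sum_k\omega_kD_{kl}=\delta_{lp}-\delta_{l0}$ (a direct consequence of (\cref{eqn: SBP}) summed over $k$ together with (\cref{eq:sum_Dkl_vanishes})) turns the entire volume term into the clean expression $q(\textbf{U}^p_j)-q(\textbf{U}^0_j)$.

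Next I would add back the two surface terms $\V(\textbf{U}^p_j)^\top\D^-(\textbf{U}^p_j,\textbf{U}^0_{j+1})$ and $\V(\textbf{U}^0_j)^\top\D^+(\textbf{U}^p_{j-1},\textbf{U}^0_{j})$ and regroup the outcome interface by interface. At each interface I would substitute the definition of $Q$ and the splitting $\D^\pm=\D^\pm_{ec}\pm\D_\nu$ from (\cref{Eqn: entropy stable defn}); the entropy conservation identity (\cref{Eqn: entropy conserv defn nc}) then cancels the $\D^\pm_{ec}$ and $q$ contributions, leaving exactly $-\tfrac12\lb\V\rb^\top\D_\nu$ per interface. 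Hence the left-hand side of (\cref{semi-discrete entropy ineq}) equals $-\tfrac12\lb\V\rb^\top\D_\nu$ summed over the two interfaces bordering $\kappa_j$, and the dissipation constraint (\cref{Eqn: entropy constraint}) yields the claimed inequality. I expect this volume reduction to be the main obstacle, because $\tD$ is deliberately non-symmetric (the scheme uses a symmetriser rather than a symmetric two-point flux), so the $k\leftrightarrow l$ relabelling must be paired with the correct ordered form of (\cref{Eqn: entropy conserv defn nc}) for the $2(q(\textbf{U}^l_j)-q(\textbf{U}^k_j))$ collapse and the diagonal cancellation to occur.

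For the cell-averaged identity (\cref{Eqn: cell avg conserv}) I would instead sum (\cref{eqn: modified DG semi-discrete}) over $k$ with unit weights: the time-derivative term gives $h\,d\langle\vecu_h\rangle_j/dt$ and the $\delta$-terms reproduce $\D^-$ and $\D^+$ directly. In the volume sum I split $\tD$ through (\cref{tilde Db}) into the symmetric flux part $\textbf{h}(\textbf{U}^k_j,\textbf{U}^l_j)+\textbf{h}(\textbf{U}^l_j,\textbf{U}^k_j)$ and the nonconservative part $\nc^-(\textbf{U}^k_j,\textbf{U}^l_j)-\nc^+(\textbf{U}^l_j,\textbf{U}^k_j)$; the SBP-symmetrisation applied to the symmetric flux part, together with $\textbf{h}(\vecu,\vecu)=\textbf{f}(\vecu)$, yields $\textbf{f}(\textbf{U}^p_j)-\textbf{f}(\textbf{U}^0_j)$. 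For the nonconservative part I would substitute (\cref{eqn: high-order approximation proof a}), symmetrise in $k,l$ once more, and use (\cref{eqn: high-order approximation proof b}) and (\cref{eqn: high-order approximation proof c}) to reduce the antisymmetric combination to $\tfrac12(\textbf{c}(\textbf{U}^k_j)+\textbf{c}(\textbf{U}^l_j))(\textbf{U}^l_j-\textbf{U}^k_j)$; a final symmetrisation, valid because this summand is symmetric in $k,l$ and vanishes on the diagonal by (\cref{eqn: high-order approximation proof d}), identifies it with $\langle\textbf{c}(\vecu_h),d_x\vecu_h\rangle^p_j$, establishing (\cref{Eqn: cell avg conserv}). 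High-order accuracy then follows from a truncation-error analysis of the flux-differencing residual (\cref{eqn: residual}): the diagonal consistency $\textbf{h}(\vecu,\vecu)=\textbf{f}(\vecu)$ and $\mathcal{C}(\vecu,\vecu)=\textbf{c}(\vecu)$ ensure the two-point operators reduce to the physical fluxes, so the accuracy argument for conservation laws in \cite{fisher2013high,gassner2016split} and its nonconservative extension in \cite{renac2019entropy} carry over to show the nodal residual approximates $\partial_x\textbf{f}(\vecu)+\textbf{c}(\vecu)\partial_x\vecu$ to the design order.
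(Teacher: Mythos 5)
Your proposal is correct and follows essentially the same route as the paper: the paper's own proof delegates the entropy inequality and the accuracy claim to the cited results of Renac and Chen--Shu (whose SBP-relabelling and entropy-condition arguments you reconstruct faithfully, including the interface regrouping that isolates $-\tfrac12\lb\V\rb^\top\D_\nu$) and only details the cell-averaged identity, which you obtain by the same flux-differencing manipulation, merely organised as a double symmetrisation instead of the paper's sequential use of (\cref{eqn: high-order approximation proof a})--(\cref{eqn: high-order approximation proof d}). One cosmetic slip: the summand $\big(\textbf{c}(\textbf{U}^k_j)+\textbf{c}(\textbf{U}^l_j)\big)(\textbf{U}^l_j-\textbf{U}^k_j)$ is antisymmetric, not symmetric, in $k,l$, but your reduction of it to $\langle\textbf{c}(\vecu_h),d_x\vecu_h\rangle^p_j$ is nonetheless valid.
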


\begin{proof}
 These results are consequences of, e.g., \cite[Theorem 3.3]{chen2017entropy} for the conservative terms and \cite[Theorems 3.1 and 3.2]{renac2019entropy} for the nonconserative ones. First, the entropy inequality has been proved in \cite[Theorem 3.1]{renac2019entropy} by using the definition (\cref{tilde Da}) of the volume terms together with the entropy condition (\cref{Eqn: entropy conserv defn nc}). High-order accuracy of the discretization in the volume integral in (\cref{eqn: residual}) has been proved in \cite[Theorem 3.3]{chen2017entropy} for the conservative terms by using the symmetric flux $\tfrac{1}{2}\big(\textbf{h}(\vecu^-,\vecu^+) + \textbf{h}(\vecu^+,\vecu^-)\big)$ in (\cref{tilde Db}) and the SBP property (\cref{eqn: SBP}), and in \cite[Theorem 3.2]{renac2019entropy} by using (\cref{eqn: high-order approximation proof}) and the SBP property. Finally, by summing (\cref{eqn: modified DG semi-discrete}) over $0\leqslant k\leqslant p$ and using (\cref{eqn: residual}) and (\cref{cell avg defn}) we obtain
\begin{linenomath*}
\begin{equation*}
  h\frac{d\langle\vecu_h\rangle_j}{dt} + \sum_{k=0}^p\sum^p_{l=0}\omega_kD_{kl}\tD(\textbf{U}^k_j,\textbf{U}^l_j) + \D^-(\textbf{U}^p_j,\textbf{U}^0_{j+1}) + \D^+(\textbf{U}^p_{j-1},\textbf{U}^0_{j}) = 0,
\end{equation*}
\end{linenomath*}
where
\begin{linenomath*}
\begin{align*}
 \sum_{k,l=0}^p\omega_kD_{kl}\tD(\textbf{U}^k_j,\textbf{U}^l_j) &\overset{(\cref{tilde Db})}{=} \sum_{k,l=0}^p\omega_kD_{kl} \big({\bf h}(\textbf{U}^k_j,\textbf{U}^l_j)+{\bf d}^-(\textbf{U}^k_j,\textbf{U}^l_j)\big) + \sum_{k,l=0}^p\omega_kD_{kl} \big({\bf h}(\textbf{U}^l_j,\textbf{U}^k_j)-{\bf d}^+(\textbf{U}^l_j,\textbf{U}^k_j)\big) \\
&\overset{(\cref{eqn: SBP})}{\underset{(\cref{eq:consistency_h_d})}{=}} \sum_{k,l=0}^p\omega_kD_{kl} \big({\bf h}(\textbf{U}^k_j,\textbf{U}^l_j)+{\bf d}^-(\textbf{U}^k_j,\textbf{U}^l_j)\big) - \sum_{k,l=0}^p\omega_lD_{lk} \big({\bf h}(\textbf{U}^l_j,\textbf{U}^k_j)-{\bf d}^+(\textbf{U}^l_j,\textbf{U}^k_j)\big) + {\bf f}({\bf U}_j^p) - {\bf f}({\bf U}_j^0) \\
&\overset{(\cref{eqn: high-order approximation proof a})}{\underset{(\cref{eqn: high-order approximation proof b})}{=}} \sum_{k,l=0}^p\omega_kD_{kl} {\cal C}(\textbf{U}^k_j,\textbf{U}^l_j)(\textbf{U}^l_j-\textbf{U}^k_j) + {\bf f}({\bf U}_j^p) - {\bf f}({\bf U}_j^0) \\
&\overset{(\cref{eqn: SBP})}{\underset{(\cref{eqn: high-order approximation proof d})}{=}} \sum_{k,l=0}^p\omega_kD_{kl} {\cal C}(\textbf{U}^k_j,\textbf{U}^l_j)\textbf{U}^l_j + \sum_{k,l=0}^p\omega_lD_{lk} {\cal C}(\textbf{U}^k_j,\textbf{U}^l_j)\textbf{U}^k_j  - {\bf c}({\bf U}_j^p){\bf U}_j^p + {\bf c}({\bf U}_j^0){\bf U}_j^0 + {\bf f}({\bf U}_j^p) - {\bf f}({\bf U}_j^0) \\
&\overset{(\cref{eqn: high-order approximation proof c})}{=}  \sum_{k,l=0}^p\omega_kD_{kl} \big( {\bf c}(\textbf{U}^k_j)+{\bf c}(\textbf{U}^l_j) \big)\textbf{U}^l_j - {\bf c}({\bf U}_j^p){\bf U}_j^p + {\bf c}({\bf U}_j^0){\bf U}_j^0 + {\bf f}({\bf U}_j^p) - {\bf f}({\bf U}_j^0) \\
&\overset{(\cref{eqn: SBP})}{=}  \sum_{k,l=0}^p\omega_kD_{kl} {\bf c}(\textbf{U}^k_j)\textbf{U}^l_j - \sum_{k,l=0}^p\omega_kD_{kl} {\bf c}(\textbf{U}^k_j)\textbf{U}^k_j + {\bf f}({\bf U}_j^p) - {\bf f}({\bf U}_j^0) \\
&\overset{(\cref{eq:sum_Dkl_vanishes})}{\underset{(\cref{eq:1d_inner_product})}{=}} \langle\textbf{c}(\vecu_h),d_x\vecu_h\rangle^p_j + {\bf f}({\bf U}_j^p) - {\bf f}({\bf U}_j^0)
\end{align*}
\end{linenomath*}
\end{proof}

Note that (\cref{Eqn: cell avg conserv}) proves that the discretization of the fluxes ${\bf f}$ in (\cref{eqn: residual}) is in conservative form. In the following section we propose numerical fluxes for (\cref{Eqn: compact BNM}) that satisfy the assumptions in \cref{theorem entropy ineq}.

%
%
\section{Numerical fluxes for the Baer-Nunziato model} \label{sec: fluctuation fluxes}
Here we derive the numerical fluxes for the model (\cref{Eqn: BNM}) that satisfy the entropy conservation (\cref{Eqn: entropy conserv defn}) and dissipation (\cref{Eqn: entropy stable defn}) properties together with the assumptions in \cref{theorem entropy ineq}. An essential tool which would help in the algebraic manipulations are the Leibniz identities, which we recall here. Let $a^+,a^-,b^+,b^-,c^+,c^-$ in $\mathbb{R}$ have finite values, then we have
\begin{linenomath*}
\begin{equation}\label{Defn: Leibniz rule}
    \displaystyle \lb ab \rb = \xoverline{a}\lb b\rb + \xoverline{b}\lb a\rb, \quad \lb abc \rb = \xoverline{a}(\xoverline{b}\lb c\rb + \xoverline{c} \lb b\rb) + \xoverline{bc}\lb a\rb,
\end{equation}
\end{linenomath*}
where $\xoverline{a}= \displaystyle \frac{a^++a^-}{2}$ is the arithmetic mean and $\lb a\rb = a^+-a^-$ is the jump operator.

\subsection{Entropy conservative fluxes}\label{ssec: entropy conserv}
We begin by proposing entropy conservative numerical fluxes.

\begin{prop} \label{thm EC FF}
The numerical fluxes (\cref{Eqn: fluctuation flux}) with the following definitions are consistent and entropy conservative fluxes that satisfy the assumptions (\cref{eqn: high-order approximation proof}) of \cref{theorem entropy ineq} for the Baer-Nunziato model (\cref{Eqn: BNM}) with the EOS (\cref{Eqn: EOS}) and the interface variables (\cref{eqn: intref_var}).
\begin{linenomath*}
\begin{equation}\label{eqn: EC fluxes}
    \normalfont{\textbf{h}}(\vecu^-,\vecu^+) := 
    \begin{pmatrix}
    0\\
    h_{\rho_i}\\
    h_{\rho u_i}\\
    h_{\rho E_i}
    \end{pmatrix}
    -\beta_s\frac{\lb \alpha_i\rb}{2}
    \begin{pmatrix}
    1\\ 
    \tilde{h}_{\rho_i}\\ 
    \tilde{h}_{\rho u_i} \\ 
    \tilde{h}_{\rho E_i}
    \end{pmatrix}, \quad \normalfont{\textbf{d}}^\pm(\vecu^-,\vecu^+):=\frac{\lb\alpha_i\rb}{2}
    \begin{pmatrix}
    \uI^{\pm}\\
    0\\ 
    -\pI^{\pm}\\
    -\pI^\pm\uI^\pm
    \end{pmatrix},
\end{equation}
\end{linenomath*}
where
\begin{linenomath*}
\begin{equation}\label{eqn: EC fluxes values}
    \begin{aligned}
        (h_{\rho_i}, h_{\rho u_i}, h_{\rho E_i}) &= \left(\xoverline{\alpha}_i\xoverline{u}_i\hat{\rho}_i,\; \xoverline{\alpha}_i\left(\xoverline{u}^2_i\hat{\rho}_i+\tfrac{\xoverline{\mathrm{p}_i\theta_i}}{\xoverline{\theta}_i}\right),\; \xoverline{\alpha}_i\xoverline{u}_i\left(\hat{\rho}_i\left(\tfrac{\Cv_i}{\hat{\theta}_i}+\tfrac{u^-_iu^+_i}{2}\right)+\tfrac{\xoverline{\press_i\theta_i}}{\xoverline{\theta_i}}+\press_{\infty,i}\right)\right),\\
        (\tilde{h}_{\rho_i}, \tilde{h}_{\rho u_i}, \tilde{h}_{\rho E_i}) &= \left(\hat{\rho}_i,\; \hat{\rho}_i\xoverline{u}_i,\; \hat{\rho}_i\left(\tfrac{\Cv_i}{\hat{\theta}_i}+\tfrac{u^-_iu^+_i}{2}\right) + \press_{\infty,i}\right),
    \end{aligned}
\end{equation}
\end{linenomath*}
$\beta_s \geqslant 0$ is a user-defined coefficient (see \cref{thm: positivity}) and $\displaystyle\hat{a} = \frac{\lb a\rb}{\lb \ln a\rb}$ is the logarithmic mean \cite{ismail2009affordable}.
\end{prop}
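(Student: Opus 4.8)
The plan is to verify the three asserted properties in turn: consistency of $\h$ and $\nc^\pm$, the structural conditions \cref{eqn: high-order approximation proof} required by \cref{theorem entropy ineq}, and the entropy-conservation identity \cref{Eqn: entropy conserv defn}. Consistency is immediate: on the diagonal $\vecu^-=\vecu^+=\vecu$ every jump $\lb\cdot\rb$ vanishes, so the $\beta_s$-term in $\h$ and both $\nc^\pm$ drop out, while each arithmetic mean and each logarithmic mean $\hat a$ in \cref{eqn: EC fluxes values} collapses to its pointwise value; comparison with \cref{Eqn: BNM vectors} then gives $\h(\vecu,\vecu)=\f(\vecu)$ and $\nc^\pm(\vecu,\vecu)=\mathbf 0$, that is \cref{eq:consistency_h_d}.

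For the structural conditions I would note that both $\nc^\pm$ in \cref{eqn: EC fluxes} are proportional to $\lb\alpha_i\rb$, itself a component of $\lb\vecu\rb$. I can therefore factor $\nc^\pm(\vecu^-,\vecu^+)=\mathcal C^\pm(\vecu^-,\vecu^+)\lb\vecu\rb$ with $\mathcal C^\pm$ the matrix whose only nonzero column, the one acting on the $\alpha_i$ slot, is $\tfrac12(\uI^\pm,0,-\pI^\pm,-\pI^\pm\uI^\pm)^\top$; this is \cref{eqn: high-order approximation proof a,eqn: high-order approximation proof b}. Adding $\mathcal C(\vecu^-,\vecu^+)$ and $\mathcal C(\vecu^+,\vecu^-)$ symmetrizes the one-sided traces into $\uI^-+\uI^+$, $\pI^-+\pI^+$ and $\pI^-\uI^-+\pI^+\uI^+$, which is precisely $\textbf c(\vecu^-)+\textbf c(\vecu^+)$ read off \cref{Eqn: BNM vectors}, giving \cref{eqn: high-order approximation proof c}, while the diagonal evaluation returns $\textbf c(\vecu)$, giving \cref{eqn: high-order approximation proof d}. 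The decisive choice making this work is the use of the one-sided interface states $\uI^\pm,\pI^\pm$ rather than a single averaged value.

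The heart of the proof is \cref{Eqn: entropy conserv defn}. First I would make the entropy variables $\V=\eta'(\vecu)$ explicit for $\eta=-\sum_i\alpha_i\rho_is_i$, using \cref{eqn: physical entropy} and the stiffened-gas identity $\Cv_i/\theta_i=e_i-\press_{\infty,i}/\rho_i$: the phasic momentum and energy components reduce to $\theta_iu_i$ and $-\theta_i$, while the single void-fraction component, evaluated at fixed phasic conserved variables, is $\theta_2\press_2-\theta_1\press_1$ (up to $\press_{\infty,i}$ corrections). I would then insert $\V$, the fluxes \cref{eqn: EC fluxes,eqn: EC fluxes values} and the entropy flux $q$ into \cref{Eqn: entropy conserv defn} and expand every jump of a product by the Leibniz identities \cref{Defn: Leibniz rule}. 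The identity then separates into two pieces. The piece built from the $\xoverline{\alpha}_i$-weighted block $(0,h_{\rho_i},h_{\rho u_i},h_{\rho E_i})^\top$ and the $\xoverline{\alpha}_i\lb\cdot\rb$ part of $\lb\V^\top\f-q\rb$ reduces, phase by phase, to the Tadmor-type entropy-conservation relation of a Chandrashekar-type kinetic-energy-preserving flux for the phasic Euler subsystem; here the logarithmic means $\hat\rho_i,\hat\theta_i$ are exactly what let $\hat a\lb\ln a\rb=\lb a\rb$ turn the entropy jumps $\lb s_i\rb$ into the algebraic jumps of $q$, and the pressure average $\xoverline{\press_i\theta_i}/\xoverline{\theta}_i$ is forced by the same matching. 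The complementary piece collects $\V(\vecu^-)^\top\nc^-+\V(\vecu^+)^\top\nc^+$ and the $\lb\alpha_i\rb$-proportional remainders; with the one-sided fluctuations and the closures \cref{eqn: intref_var} it is the discrete analogue of the exact cancellation \cref{eqn: partialx alphai in entropy equality}, so the nonconservative entropy production vanishes.

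It remains to check that the $\beta_s$-term does not break entropy conservation. As $\beta_s\geqslant 0$ is free, its contribution to $\h^\top\lb\V\rb$ must vanish identically. Summing the two phases and using the saturation $\lb\alpha_1\rb=-\lb\alpha_2\rb$ together with the single void equation, this contribution collapses to $-\beta_s\tfrac{\lb\alpha_1\rb}{2}\big(\lb V_\alpha\rb+B_1-B_2\big)$, where $V_\alpha$ is the void-fraction component of $\V$ and $B_i$ is the pairing of $(\tilde h_{\rho_i},\tilde h_{\rho u_i},\tilde h_{\rho E_i})$ with the corresponding phasic jumps of $\V$. The tilded averages in \cref{eqn: EC fluxes values} are built precisely so that $\hat a\lb\ln a\rb=\lb a\rb$ gives $B_i=\lb\theta_i\press_i\rb$, and since $\lb V_\alpha\rb=\lb\theta_2\press_2\rb-\lb\theta_1\press_1\rb$ the bracket cancels. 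I expect the main obstacle to be exactly this entropy-conservation block: producing the explicit entropy variables — in particular the void-fraction component with its $\press_{\infty,i}$ contributions — and then organizing the Leibniz-rule bookkeeping so that the logarithmic and arithmetic means in \cref{eqn: EC fluxes values} reproduce $\lb q\rb$ with no residual term, all while correctly tracking the two-phase saturation.
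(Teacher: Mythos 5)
Your proposal is correct and follows essentially the same route as the paper's proof: consistency via collapse of the arithmetic and logarithmic means, the structural conditions \cref{eqn: high-order approximation proof} by factoring $\nc^\pm$ through the $\lb\alpha_i\rb$ column with one-sided traces $\uI^\pm,\pI^\pm$, and entropy conservation by an explicit Leibniz-rule expansion of \cref{Eqn: entropy conserv defn} with the entropy variables \cref{Eqn: entropy variables}, the discrete closure cancellation \cref{Eqn:pIpuIu} for the nonconservative block, and the separate per-phase cancellation $B_i=\lb\press_i\theta_i\rb$ against the void-fraction component for the $\beta_s$ block. The only imprecision is minor: the void-fraction entropy variable is exactly $(-1)^i(\press_1\theta_1-\press_2\theta_2)$ with the stiffened-gas pressure, with no residual $\press_{\infty,i}$ correction.
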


\begin{proof}
Consistency of the numerical flux ${\bf h}$ follows from consistency of the arithmetic and logarithmic means and the fact that $\rho_i e_i=\rho_i\Cv_iT_i+\press_{\infty,i}$ for the EOS (\cref{Eqn: EOS}). It can be easily checked that ${\bf d}^\pm$ satisfy (\cref{eqn: high-order approximation proof}) and consistency ${\bf d}^\pm({\bf u},{\bf u})=0$.

Now let us recall the entropy variables associated to the entropy in (\cref{Eqn: entropy ineq}):
\begin{linenomath*}
\begin{equation}\label{Eqn: entropy variables}
    \V(\vecu)=
    \begin{pmatrix}
    (-1)^i\left(\mathrm{p}_1\theta_1-\mathrm{p}_2\theta_2\right)\\
    -s_i+ \left(\displaystyle h_i-\frac{u^2_i}{2}\right)\theta_i\\
    u_i\theta_i\\ 
    -\theta_i
    \end{pmatrix},
\end{equation}
\end{linenomath*}
where $h_i(\rho_i,e_i)=e_i+\tfrac{\mathrm{p}_i(\rho_i,e_i)}{\rho_i}=\Cp_iT_i$ is the specific enthalpy for phase $i=1,2$. Then, the discrete counterpart of (\cref{eqn: partialx alphai in entropy equality}) holds for  interface closure laws (\cref{eqn: intref_var}) and reads
\begin{linenomath*}
\begin{equation}\label{Eqn:pIpuIu}
 \sum^2_{i=1}\xoverline{(\pI-\press_i)(\uI-u_i)\theta_i}\lb\alpha_i\rb = 0.
\end{equation}
\end{linenomath*}

Entropy conservation requires the fluxes (\cref{Eqn: fluctuation flux}) to satisfy (\cref{Eqn: entropy conserv defn}) so we have to check that
\begin{linenomath*}
\begin{equation}\label{eqn: delta eta EC flux}
\Delta Q(\vecu^-,\vecu^+) :=  -\textbf{h}(\vecu^-,\vecu^+)\cdot \lb\V(\vecu)\rb+\V(\vecu^-)\cdot\textbf{d}^-(\vecu^-,\vecu^+) + \V(\vecu^+)\cdot\textbf{d}^+(\vecu^-,\vecu^+) + \lb\textbf{f}(\vecu)\cdot\V(\vecu)-q(\vecu)\rb = 0.
\end{equation}
\end{linenomath*}
Below we detail each term in the above relation by using the Liebniz identities (\cref{Defn: Leibniz rule}) for the numerical fluxes (\cref{eqn: EC fluxes}). Note that direct manipulations give
\begin{linenomath*}
\begin{equation}\label{Eqn:thermo_jump}
 \lb\mathrm{p}_i\theta_i\rb \overset{(\cref{Eqn: EOS})}{=} (\gamma_i-1)\Cv_i\lb\rho_i\rb-\press_{\infty,i}\lb\theta_i\rb, \quad \lb h_i\theta_i\rb = 0, \quad \lb s_i\rb \overset{(\cref{eqn: physical entropy})}{=} - \Cv_i\lb\ln \theta_i\rb-(\gamma_i-1)\Cv_i\lb\ln \rho_i\rb, \quad \xoverline{u}_i^2-\xoverline{\frac{u_i^2}{2}}=\frac{u_i^-u_i^+}{2}.
\end{equation}
\end{linenomath*}

Then, by (\cref{eqn: EC fluxes}) and (\cref{Eqn: entropy variables}), we have
\begin{linenomath*}
\begin{equation}\label{eqn: vh}
\begin{aligned}
    \lb \V(\vecu) \rb \cdot \textbf{h}(\vecu^-,\vecu^+) = &\sum^2_{i=1} \xoverline{\alpha}_i\hat{\rho}_i\xoverline{u}_i\lb (h_i-u^2_i/2)\theta_i - s_i \rb + \xoverline{\alpha}_i\left(\hat{\rho}_i\xoverline{u}^2_i+\tfrac{\xoverline{\mathrm{p}_i\theta_i}}{\xoverline{\theta}_i}\right) \lb u_i\theta_i \rb - \xoverline{\alpha}_i\xoverline{u}_i\left(\hat{\rho}_i\left(\tfrac{\Cv_i}{\hat{\theta}_i}+\tfrac{u^-_iu^+_i}{2}\right)+\tfrac{\xoverline{\press_i\theta_i}}{\xoverline{\theta_i}}+\press_{\infty,i}\right) \lb \theta_i\rb\\
    &-\beta_s\tfrac{\lb \alpha_i\rb}{2}\left( -\lb\press_i\theta_i\rb + \hat{\rho}_i\lb (h_i-u^2_i/2)\theta_i-s_i\rb + \hat{\rho}_i\xoverline{u}_i\lb u_i\theta_i\rb - \left(\hat{\rho}_i\left(\tfrac{\Cv_i}{\hat{\theta}_i}+\tfrac{u^-_iu^+_i}{2}\right)+\press_{\infty,i}\right)\lb\theta_i\rb \right)\\
    \overset{(\cref{Eqn:thermo_jump})}{=} &\sum^2_{i=1} - \xoverline{\alpha}_i\hat{\rho}_i\xoverline{u}_i \Big( \xoverline{u}_i\xoverline{\theta}_i\lb u_i\rb + \xoverline{u^2_i/2}\lb\theta_i\rb - \Cv_i\lb\ln \theta_i\rb-(\gamma_i-1)\Cv_i\lb\ln \rho_i\rb\Big)\\
    &+\xoverline{\alpha}_i\left(\hat{\rho}_i\xoverline{u}^2_i+\tfrac{\xoverline{\mathrm{p}_i\theta_i}}{\xoverline{\theta}_i}\right) \lb u_i\theta_i \rb - \xoverline{\alpha}_i\xoverline{u}_i\left(\hat{\rho}_i\left(\tfrac{\Cv_i}{\hat{\theta}_i}+\tfrac{u^-_iu^+_i}{2}\right)+\tfrac{\xoverline{\press_i\theta_i}}{\xoverline{\theta_i}}+\press_{\infty,i}\right) \lb \theta_i\rb\\
    &-\beta_s\tfrac{\lb\alpha_i\rb}{2}\bigg(-(\gamma_i-1)\Cv_i\lb\rho_i\rb+\press_{\infty,i}\lb\theta_i\rb - \hat{\rho}_i\left( \xoverline{u}_i\xoverline{\theta}_i\lb u_i\rb + \xoverline{u^2_i/2}\lb\theta_i\rb - \Cv_i\lb\ln \theta_i\rb-(\gamma_i-1)\Cv_i\lb\ln \rho_i\rb\right) \\
    &+\hat{\rho}_i\xoverline{u}_i\left(\xoverline{u}_i\lb\theta_i\rb+\xoverline{\theta}_i\lb u_i\rb\right) - \left(\hat{\rho}_i\left(\tfrac{\Cv_i}{\hat{\theta}_i}+\tfrac{u^-_iu^+_i}{2}\right)+\press_{\infty,i}\right)\lb\theta_i\rb\bigg)\\
		\overset{(\cref{Defn: Leibniz rule})}{\underset{(\cref{Eqn:thermo_jump})}{=}}& \sum^2_{i=1} - \xoverline{\alpha}_i\xoverline{u}_i\hat{\rho}_i \Big( \xoverline{u}_i\xoverline{\theta}_i\lb u_i\rb + \xoverline{u^2_i/2}\lb\theta_i\rb - \Cv_i\lb\ln \theta_i\rb-(\gamma_i-1)\Cv_i\lb\ln \rho_i\rb\Big)\\
    &+\xoverline{\alpha}_i\left(\xoverline{u}^2_i\hat{\rho}_i+\tfrac{\xoverline{\mathrm{p}_i\theta_i}}{\xoverline{\theta}_i}\right) \big(\xoverline{u_i}\lb\theta_i\rb+\xoverline{\theta_i}\lb u_i\rb\big)-\xoverline{\alpha}_i\xoverline{u}_i\left(\hat{\rho}_i\left(\tfrac{\Cv_i}{\hat{\theta}_i}+\tfrac{u^-_iu^+_i}{2}\right)+\tfrac{\xoverline{\press_i\theta_i}}{\xoverline{\theta_i}}+\press_{\infty,i}\right) \lb \theta_i\rb.
\end{aligned}
\end{equation}
\end{linenomath*}

Furthermore, using (\cref{eqn: EC fluxes}) we easily obtain
\begin{linenomath*}
\begin{equation}\label{eqn vd- + vd+}
    \begin{aligned}
        \V(\vecu^-)\cdot\textbf{d}^-(\vecu^-,\vecu^+) + \V(\vecu^+)\cdot\textbf{d}^+(\vecu^-,\vecu^+) = \sum^2_{i=1} \xoverline{(\pI\uI-\pI u_i-\press_i\uI)\theta_i}\lb\alpha_i\rb \equaltext{(\cref{Eqn:pIpuIu})} -\sum^2_{i=1} \xoverline{\press_iu_i\theta_i}\lb\alpha_i\rb,
    \end{aligned}
\end{equation}
\end{linenomath*}
and
\begin{linenomath*}
\begin{equation}\label{eqn: fv-q}
    \begin{aligned}
        \lb\textbf{f}(\vecu)\cdot\V(\vecu)-q(\vecu)\rb = &\sum^2_{i=1} \lb-\alpha_i\rho_iu_i\big(s_i-(h_i-u^2_i/2)\theta_i\big) + \alpha_i(\rho_iu^2_i+\press_i)u_i\theta_i - \alpha_i(\rho_iE_i+\press_i)u_i\theta_i+\alpha_i\rho_is_iu_i\rb\\
        = &\sum^2_{i=1} \lb \alpha_i\press_iu_i\theta_i\rb
        \equaltext{(\cref{Defn: Leibniz rule})} \sum^2_{i=1} \xoverline{\press_iu_i\theta_i}\lb\alpha_i\rb + \xoverline{\alpha}_i \xoverline{\press_i\theta_i}\lb u_i\rb +\xoverline{\alpha}_i\xoverline{u}_i\lb\press_i\theta_i\rb \\
				\equaltext{(\cref{Eqn:thermo_jump})}& \sum^2_{i=1} \xoverline{\press_iu_i\theta_i}\lb\alpha_i\rb + \xoverline{\alpha}_i \xoverline{\press_i\theta_i}\lb u_i\rb +\xoverline{\alpha}_i\xoverline{u}_i\big((\gamma_i-1)\Cv_i\lb\rho_i\rb-\press_{\infty,i}\lb\theta_i\rb\big).
    \end{aligned}
\end{equation}
\end{linenomath*}

Substituting (\cref{eqn: vh}), (\cref{eqn vd- + vd+}) and (\cref{eqn: fv-q}) into (\cref{eqn: delta eta EC flux}) and collecting terms proportional to $\lb\rho_i\rb$, $\lb u_i\rb$, and $\lb\theta_i\rb$, we get
\begin{linenomath*}
\begin{equation*}
    \begin{aligned}
        \Delta Q(\vecu^-,\vecu^+) \overset{(\cref{Eqn:thermo_jump})}{=} &\sum^2_{i=1} \xoverline{\alpha}_i\left(\hat{\rho}_i\xoverline{u}_i^2\xoverline{\theta}_i-\Big(\hat{\rho}_i\xoverline{u}^2_i+\tfrac{\xoverline{\mathrm{p}_i\theta_i}}{\xoverline{\theta}_i}\Big)\xoverline{\theta}_i+\xoverline{\mathrm{p}_i\theta_i}\right)\lb u_i\rb \\
				&+\xoverline{\alpha}_i\xoverline{u}_i\left(\hat{\rho}_i\Big(\xoverline{\tfrac{u^2_i}{2}}-\Cv_i\tfrac{\lb\ln \theta_i\rb}{\lb\theta_i\rb}\Big) - \hat{\rho}_i\xoverline{u}_i^2-\tfrac{\xoverline{\mathrm{p}_i\theta_i}}{\xoverline{\theta_i}}+\hat{\rho}_i\Big(\tfrac{\Cv_i}{\hat{\theta}_i}+\tfrac{u^-_iu^+_i}{2}\Big) + \tfrac{\xoverline{\mathrm{p}_i\theta_i}}{\xoverline{\theta_i}} + \press_{\infty,i} - \press_{\infty,i} \right)\lb \theta_i\rb \\
				&-(\gamma_i-1)\Cv_i\xoverline{\alpha}_i\xoverline{u}_i\Big(\hat{\rho}_i\lb\ln\rho_i\rb - \lb\rho_i\rb \Big) = 0,
    \end{aligned}
\end{equation*}
\end{linenomath*}
which concludes the proof.
\end{proof}

\begin{remark}\label{rk:LD_dissip}
The contributions to the volume integral in (\cref{eqn: residual}) of the terms associated to $\beta_s$ in (\cref{eqn: EC fluxes}) vanish due to the symmetrizer $\normalfont{\textbf{h}}(\vecu^-,\vecu^+) + \textbf{h}(\vecu^+,\vecu^-)$ in (\cref{tilde Db}). They will however play an important role in the design of the entropy stable fluxes at interfaces (see \cref{thm: positivity}). They may be compared to the upwinding term in the Lax-Friedrichs flux derived in \cite{saurel2003multiphase} for (\cref{Eqn: BNM}). The main motivation for including this term was to introduce stabilizing mechanisms in the transport equation for the void fraction, as is evident from the first component of $\normalfont{\bf h}$ in (\cref{eqn: EC fluxes}). However, $\uI$ is associated to a LD field, so the remaining terms $\tilde{h}_{\rho_i}$, $\tilde{h}_{\rho u_i}$, and $\tilde{h}_{\rho E_i}$ are further included so that this dissipation does not affect the entropy balance as shown in the proof above.
\end{remark}

\begin{remark}
Assuming perfect gas EOS in (\cref{Eqn: EOS}), $\mathrm{p}_{\infty,i}=0$, and uniform void fractions, $\lb\alpha_i\rb=0$, then the numerical flux ${\bf h}(\vecu^-,\vecu^+)$ in (\cref{eqn: EC fluxes}) for both phases reduce to the entropy conservative Chandraskhar flux \cite{Chandrashekar2013kep_es} for the compressible Euler equations. This numerical flux has been here extended to stiffened gas EOS (\cref{Eqn: EOS}).
\end{remark}

\subsection{Entropy stable fluxes}
We here follow the procedure in \cite{ismail2009affordable} and build entropy stable fluxes (\cref{Eqn: entropy stable defn}) by adding upwind-type dissipation to the entropy conservative numerical fluxes (\cref{Eqn: fluctuation flux}). We introduce numerical dissipation to the equations of mass, momentum and energy for each phase. The rationales for this particular choice of the numerical dissipation are as follows. First, we do not add numerical dissipation to the void fraction equation as it is associated to a LD field. We stress that the conservative flux in (\cref{eqn: EC fluxes}) already adds dissipation through an upwinding term without altering the entropy balance (see \cref{rk:LD_dissip}). Second, since we exclude resonance effects according to the assumption (\cref{eqn: resonance}), the void fractions remain uniform across shocks leading to uncoupled phases. It is, thus, appropriate to include dissipation phase by phase. 

\begin{prop}\label{thm: ES FF}
A class of entropy stable fluxes (\cref{Eqn: entropy stable defn}) that satisfy (\cref{Eqn: entropy constraint}) can be obtained for the Baer-Nunziato model (\cref{Eqn: BNM}) where the numerical dissipation takes the form
\begin{linenomath*}
\begin{equation*}
    \normalfont{\textbf{D}}_\nu (\vecu^-,\vecu^+)= \begin{pmatrix} 
    0 & 0      & 0      & 0      \\
    0 & k_{22} & 0      & 0      \\
    0 & k_{32} & k_{33} & 0      \\
    0 & k_{42} & k_{43} & k_{44} \\
    \end{pmatrix}\begin{pmatrix}0 \\ \lb\rho_i\rb \\ \lb u_i\rb \\ \lb T_i\rb\end{pmatrix},
\end{equation*}
\end{linenomath*}
where the matrix entries satisfy the following conditions
\begin{linenomath*}
\begin{equation}\label{Eqn: ES unknowns}
 k_{22} \geqslant 0, \quad k_{33} \geqslant 0, \quad k_{44} \geqslant 0,\quad k_{32} = \xoverline{u_i} k_{22}, \quad k_{43} = \xoverline{u_i}k_{33},\quad k_{42} = \left(\frac{\Cv_i}{\hat{\theta_i}} + \frac{u^-_iu^+_i}{2} \right)k_{22}.
\end{equation}
\end{linenomath*}
\end{prop}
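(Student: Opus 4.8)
The plan is to verify the two requirements imposed on the dissipation operator in (\cref{Eqn: entropy constraint}): the consistency $\D_\nu(\vecu,\vecu)=0$ and the dissipation inequality $\lb\V(\vecu)\rb^\top\D_\nu(\vecu^-,\vecu^+)\geqslant 0$. Consistency is immediate, since every entry of $\D_\nu$ is a linear combination of the jumps $\lb\rho_i\rb$, $\lb u_i\rb$, $\lb T_i\rb$, all of which vanish when $\vecu^-=\vecu^+$. The substance of the proof is therefore the inequality, and the idea is to expand the quadratic form $\lb\V\rb^\top\D_\nu$ explicitly, use the prescribed relations (\cref{Eqn: ES unknowns}) between the matrix entries to cancel all indefinite cross terms, and exhibit the remainder as a sum of manifestly nonnegative contributions.

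First I would write out the jumps of the components of the entropy variables (\cref{Eqn: entropy variables}) in terms of $\lb\rho_i\rb$, $\lb u_i\rb$, $\lb\theta_i\rb$, $\lb\ln\rho_i\rb$ and $\lb\ln\theta_i\rb$, using the thermodynamic identities (\cref{Eqn:thermo_jump}) and the Leibniz rule (\cref{Defn: Leibniz rule}). In particular the second component gives $\lb-s_i+(h_i-u_i^2/2)\theta_i\rb = \Cv_i\lb\ln\theta_i\rb + (\gamma_i-1)\Cv_i\lb\ln\rho_i\rb - \xoverline{u_i^2/2}\lb\theta_i\rb - \xoverline{\theta}_i\xoverline{u}_i\lb u_i\rb$ (using $\lb h_i\theta_i\rb=0$), the third gives $\lb u_i\theta_i\rb=\xoverline{u}_i\lb\theta_i\rb+\xoverline{\theta}_i\lb u_i\rb$, and the fourth is simply $-\lb\theta_i\rb$. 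Because the first row of the matrix and the first entry of the vector in $\D_\nu$ both vanish, the void-fraction entropy variable never enters the product; moreover the remaining components decouple phase by phase, so it suffices to treat one phase and sum over $i$.

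Then I would carry out the matrix--vector multiplication to obtain $\D_\nu = (0,\ k_{22}\lb\rho_i\rb,\ k_{32}\lb\rho_i\rb+k_{33}\lb u_i\rb,\ k_{42}\lb\rho_i\rb+k_{43}\lb u_i\rb+k_{44}\lb T_i\rb)^\top$, substitute the closures $k_{32}=\xoverline{u}_ik_{22}$, $k_{43}=\xoverline{u}_ik_{33}$, $k_{42}=(\Cv_i/\hat{\theta}_i+u_i^-u_i^+/2)k_{22}$ from (\cref{Eqn: ES unknowns}), and collect terms according to the products of jumps. The cross terms in $\lb\rho_i\rb\lb u_i\rb$ and $\lb u_i\rb\lb\theta_i\rb$ cancel thanks to the choices of $k_{32}$ and $k_{43}$; the coefficient of $k_{22}\lb\rho_i\rb\lb\theta_i\rb$ collapses to $-\Cv_i/\hat{\theta}_i$ after using $\xoverline{u}_i^2-\xoverline{u_i^2/2}=u_i^-u_i^+/2$, and since $\lb\theta_i\rb/\hat{\theta}_i=\lb\ln\theta_i\rb$ this exactly cancels the $\Cv_i k_{22}\lb\rho_i\rb\lb\ln\theta_i\rb$ term arising from $\lb s_i\rb$. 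What should survive is
\[
\lb\V(\vecu)\rb^\top\D_\nu = \sum_{i=1}^2\Big[(\gamma_i-1)\Cv_i k_{22}\lb\rho_i\rb\lb\ln\rho_i\rb + \xoverline{\theta}_i k_{33}\lb u_i\rb^2 - k_{44}\lb\theta_i\rb\lb T_i\rb\Big],
\]
and each bracket is nonnegative: $\lb\rho_i\rb\lb\ln\rho_i\rb\geqslant 0$ because $\ln$ is increasing, $\lb u_i\rb^2\geqslant 0$ with $\xoverline{\theta}_i>0$, and $\lb\theta_i\rb\lb T_i\rb\leqslant 0$ because $\theta_i=1/T_i$ is decreasing; together with $\gamma_i>1$, $\Cv_i>0$ and $k_{22},k_{33},k_{44}\geqslant 0$ this yields the claim.

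I expect the main obstacle to be organizational rather than conceptual: keeping track of the many jump products during the expansion and recognizing exactly which combinations the off-diagonal entries are designed to annihilate. The one genuinely delicate point is the treatment of the diagonal $\lb\theta_i\rb$ contribution, where one must combine $\xoverline{u}_i^2-\xoverline{u_i^2/2}=u_i^-u_i^+/2$ with the logarithmic-mean identity $\lb\theta_i\rb/\hat{\theta}_i=\lb\ln\theta_i\rb$ to convert a $\lb\theta_i\rb$ term into a $\lb\ln\theta_i\rb$ term so that it cancels the entropy-of-logarithm contribution; this is precisely the role played by the specific form of $k_{42}$ in (\cref{Eqn: ES unknowns}).
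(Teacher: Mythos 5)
Your proposal is correct and follows essentially the same route as the paper's proof: expand the quadratic form $\lb\V\rb^\top\D_\nu$ using the jumps of the entropy variables and the identities (\cref{Eqn:thermo_jump}), let the closures (\cref{Eqn: ES unknowns}) annihilate the cross terms (including the conversion of $\Cv_i\lb\ln\theta_i\rb$ into $\Cv_i\lb\theta_i\rb/\hat{\theta}_i$ that fixes $k_{42}$), and conclude from the sign-definiteness of $\lb\rho_i\rb\lb\ln\rho_i\rb$, $\lb u_i\rb^2$, and $-\lb T_i\rb\lb\theta_i\rb$. The surviving expression you obtain matches the paper's exactly, so no further comment is needed.
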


\begin{proof}
By construction we have $\normalfont{\textbf{D}}_\nu (\vecu,\vecu)=0$. Then, using (\cref{Eqn: entropy variables}) and (\cref{eqn: physical entropy}), we get
\begin{linenomath*}
\begin{equation*}
    \begin{aligned}
\lb\V(\vecu)\rb\cdot\textbf{D}_\nu(\vecu^-,\vecu^+) = \sum^2_{i=1} & k_{22}(\gamma_i-1)\Cv_i\lb\rho_i\rb\lb\ln\rho_i\rb +k_{33}\xoverline{\theta_i}\lb u_i\rb^2-k_{44}\lb T_i\rb\lb\theta_i\rb\\
    &+\xoverline{\theta_i}(k_{32}-k_{22}\xoverline{u_i})\lb\rho_i\rb \lb u_i\rb - \left(k_{42}-\xoverline{u_i}k_{32}-k_{22}\left(\frac{\Cv_i}{\hat{\theta_i}} - \frac{\xoverline{u^2_i}}{2} \right)\right)\lb\rho_i\rb \lb \theta_i\rb\\
    &- (k_{43}-\xoverline{u_i}k_{33})\lb u_i\rb\lb\theta_i\rb \\
		\equaltext{(\cref{Eqn: ES unknowns})}  \sum^2_{i=1} & k_{22}(\gamma_i-1)\Cv_i\lb\rho_i\rb\lb\ln\rho_i\rb +k_{33}\xoverline{\theta_i}\lb u_i\rb^2-k_{44}\lb T_i\rb\lb\theta_i\rb \geqslant 0.
    \end{aligned}
\end{equation*}
\end{linenomath*}

\end{proof}

Using dimensional arguments, we define $k_{33} = \xoverline{\rho}_i k_{22}$ and $k_{44} = \xoverline{\rho}_i\Cv_i k_{22}$, and $k_{22}=\tfrac{\epsilon_\nu}{2}\max\big(\rho_\textbf{A}(\vecu^-),\rho_\textbf{A}(\vecu^+)\big)$, with $\epsilon_\nu\geqslant0$ and $\rho_\textbf{A}(\vecu)$ the spectral radius of $\textbf{A}(\vecu)$ in (\cref{Eqn: compact BNM}), to get the following numerical dissipation
\begin{linenomath*}
\begin{equation}\label{eqn final dissipation}
    \normalfont{\textbf{D}}_\nu (\vecu^-,\vecu^+)= \frac{\epsilon_\nu}{2}\max\big(\rho_\textbf{A}(\vecu^-),\rho_\textbf{A}(\vecu^+)\big)\begin{pmatrix} 
    0 \\ \lb\rho_i\rb \\ \lb\rho_iu_i\rb \\ \Big(\frac{\Cv_i}{\hat{\theta_i}} + \frac{u^-_iu^+_i}{2}\Big)\lb\rho_i\rb + \xoverline{\rho}_i\lb E_i\rb\end{pmatrix}.
\end{equation}
\end{linenomath*}

\begin{remark}
Nonconservative systems may admit shocks which depend on small scale mechanisms such as viscosity and that numerical methods may fail to capture because the leading viscosity terms in the equivalent equation do not match these mechanisms \cite{lefloch2014numerical}. The jump conditions indeed depend on the family of paths prescribed in the jump relations which should be consistent with the viscous profile. Using (\cref{eqn final dissipation}) the decay rate for the cell-averaged entropy (\cref{semi-discrete entropy ineq}) reads
\begin{linenomath*}
\begin{equation*}
        h\frac{d\langle\eta(\vecu_h)\rangle_j}{dt}+Q(\normalfont{\textbf{U}}^p_j,\normalfont{\textbf{U}}^0_{j+1})-Q(\normalfont{\textbf{U}}^p_{j-1},\normalfont{\textbf{U}}^0_{j}) = - \frac{\epsilon_\nu}{2} \sum^2_{i=1} \frac{(\gamma_i-1)\Cv_i\lb\rho_i\rb^2}{\hat{\rho}_i} +\xoverline{\rho_i}\xoverline{\theta_i}\lb u_i\rb^2-\xoverline{\rho_i}\Cv_i\lb T_i\rb\lb\theta_i\rb \leqslant 0,
\end{equation*}
\end{linenomath*}
where the two last terms in the RHS are analogous to the ones in the physical model \cite{gallouet2004numerical} for a Prandtl number $Pr_i = 3\gamma_i /4$:
\begin{linenomath*}
\begin{equation*}
    \partial_t\eta(\vecu) + \partial_xq(\vecu) = -\sum_i\frac{4\mu_i}{3}\left(\theta_i(\partial_x u_i)^2 - \frac{3\Cp_i}{4 Pr_i}\partial_x T_i\partial_x\theta_i \right),
\end{equation*}
\end{linenomath*}
and $\mu_i > 0$ is the dynamic viscosity coefficient and are therefore consistent with the small scale mechanisms. The first term in the RHS was seen to improve stability and robustness of the computations despite its lack of physical relevance.
\end{remark}

%
%
\section{Properties of the high-order DGSEM scheme for the Baer-Nunziato model}\label{sec: properties of the scheme}
\subsection{Kinetic energy preservation}
The equation for the kinetic energy of the model (\cref{Eqn: BNM}) can be derived from the mass and momentum equations:
\begin{linenomath*}
\begin{equation*}
    \partial_t K_i +\partial_x K_iu_i + u_i\partial_x\alpha_i\press_i - \pI u_i\partial_x\alpha_i = 0, \quad i=1,2,
\end{equation*}
\end{linenomath*}
where $K_i = \frac{1}{2}\alpha_i\rho_iu^2_i$ is the partial kinetic energy of the $i$th phase. These equations contain nonconservative terms of pressure work and energy transfer between the phases. The property of kinetic energy preservation by numerical schemes was introduced in \cite{jameson2008formulation} for the compressible Euler equations, where a general condition was provided to impose kinetic energy preservation for finite volume schemes, and was seen to be useful in turbulent flow simulations. Kinetic energy preservation was later extended to high-order nodal DG schemes in \cite{gassner2014kinetic,gassner2016split} and we refer to \cite{kuya2018kinetic} for split forms of the convective terms in the compressible Euler equations that lead to kinetic energy preserving schemes. According to \cite[Theorem~2]{gassner2016split} it is sufficient to show that the volume terms of the advective part of the cell-averaged kinetic energy can be written in conservation form.

\begin{mythm}\label{thm: proof of KE preservation}
 The discretization of the volume integral in (\cref{eqn: residual}) with the numerical fluxes (\cref{eqn: EC fluxes}) is kinetic energy preserving.
\end{mythm}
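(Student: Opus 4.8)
The plan is to verify the sufficient condition recalled above from \cite[Theorem~2]{gassner2016split}: it suffices to show, for each phase $i$, that the advective part of the volume contribution to the cell-averaged kinetic energy $\langle K_i\rangle_j$, with $K_i=\tfrac12\alpha_i\rho_iu_i^2$, can be written as a difference of fluxes evaluated at the two element boundaries. I would first reduce the volume term to its transport content. Thanks to the symmetrizer $\tfrac12\big(\h(\vecu^-,\vecu^+)+\h(\vecu^+,\vecu^-)\big)$ entering $\tD$ in \cref{tilde Db}, the $\beta_s$ contributions of \cref{eqn: EC fluxes} are antisymmetric in their arguments and cancel (\cref{rk:LD_dissip}), while the nonconservative fluctuations $\nc^\pm$ and the pressure part of the momentum flux only reproduce the physical pressure-work and interfacial-transfer terms and thus do not belong to the advective part. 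Consequently the mass and momentum components of $\tD$ that enter the transport balance are the symmetric quantities $2h_{\rho_i}$ and $2h_{\rho u_i}$, respectively.

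The mechanism I would exploit is the Jameson-type identity following from \cref{eqn: EC fluxes values}, namely $h_{\rho u_i}=\xoverline{u}_i\,h_{\rho_i}+\xoverline{\alpha}_i\,\xoverline{\press_i\theta_i}/\xoverline{\theta}_i$: the momentum flux equals the arithmetic-mean velocity times the mass flux plus a pressure-like term. Observing that $K_i=(\alpha_i\rho_iu_i)^2/\big(2\alpha_i\rho_i\big)$ depends only on the partial mass and momentum, the exact nodal chain rule reads $dK_i=u_i\,d(\alpha_i\rho_iu_i)-\tfrac12u_i^2\,d(\alpha_i\rho_i)$. Contracting the momentum component of the semi-discrete system \cref{eqn: modified DG semi-discrete}--\cref{eqn: residual} with $u_i^k$ and the mass component with $-\tfrac12(u_i^k)^2$, and retaining only the flux-differencing (volume) terms, the advective contribution to $h\,d\langle K_i\rangle_j/dt$ takes the form $\sum_{k,l}\omega_kD_{kl}\,h_{\rho_i}(\textbf{U}^k_j,\textbf{U}^l_j)\big(-2u_i^k\xoverline{u}_i^{kl}+(u_i^k)^2\big)$, where $\xoverline{u}_i^{kl}=\tfrac12(u_i^k+u_i^l)$ and the pressure term only feeds the (non-advective) pressure work.

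Next I would carry out the collapse $-2u_i^k\xoverline{u}_i^{kl}+(u_i^k)^2=-u_i^ku_i^l$, which leaves the advective volume term equal to $-\sum_{k,l}\omega_kD_{kl}\,G_{kl}$ with the symmetric kernel $G_{kl}:=h_{\rho_i}(\textbf{U}^k_j,\textbf{U}^l_j)\,u_i^ku_i^l$. Since $G_{kl}=G_{lk}$, the SBP property \cref{eqn: SBP} yields the telescoping identity $\sum_{k,l}\omega_kD_{kl}G_{kl}=\tfrac12\big(G_{pp}-G_{00}\big)$, and the consistency $\h(\vecu,\vecu)=\f(\vecu)$ from \cref{eq:consistency_h_d} gives $G_{pp}=(\alpha_i\rho_iu_i)(\textbf{U}^p_j)(u_i^p)^2=2(K_iu_i)(\textbf{U}^p_j)$ and likewise at node $0$. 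Hence the advective volume contribution reduces to $-\big((K_iu_i)(\textbf{U}^p_j)-(K_iu_i)(\textbf{U}^0_j)\big)$, a pure boundary flux difference, i.e. conservation form, which is exactly the condition of \cite[Theorem~2]{gassner2016split} and establishes kinetic energy preservation.

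The main obstacle I anticipate is the bookkeeping required to justify the very first reduction: one must confirm that the $\beta_s$ dissipation, the fluctuation fluxes $\nc^\pm$, and the pressure part of $h_{\rho u_i}$ genuinely decouple from the advective telescoping and only generate the physical pressure-work and transfer contributions, leaving no residual velocity-cubic terms. Once that separation is made explicit, the remaining steps --- the chain-rule contraction, the cancellation to $-u_i^ku_i^l$, and the symmetric-kernel SBP telescoping --- are routine algebra.
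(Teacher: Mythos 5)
Your proposal is correct and follows essentially the same route as the paper's proof: isolate the advective mass and momentum volume terms (the $\beta_s$ and nonconservative contributions dropping out by antisymmetry/non-advectiveness), use that the momentum flux is $\xoverline{u}_i$ times the symmetric mass flux $h_{\rho_i}=\xoverline{\alpha}_i\xoverline{u}_i\hat{\rho}_i$ plus a pressure term, contract with $u_i^k$ and $-\tfrac12(u_i^k)^2$, collapse to the symmetric kernel $u_i^ku_i^l\,h_{\rho_i}$, and telescope via the SBP property (\cref{eqn: SBP}) to the boundary flux difference $u_{i,j}^pK_{i,j}^p-u_{i,j}^0K_{i,j}^0$, which is the sufficient condition of Gassner et al. The separation you flag as a potential obstacle is handled in the paper exactly as you describe and poses no difficulty.
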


\begin{proof}
 Let us consider the time derivative and volume term of the advective parts of the mass and momentum equations of phase $i=1,2$ in (\cref{eqn: residual}). Using (\cref{eqn: EC fluxes}) they read
\begin{linenomath*}
\begin{equation*}
 \Delta K_{i,j}^{\alpha\rho,k}=\tfrac{\omega_kh}{2}d_t(\alpha_{i,j}^k\rho_{i,j}^k)+\sum_{l=0}^p2\omega_kD_{kl}h_i^{\alpha\rho}({\bf U}_j^k,{\bf U}_j^l), \quad \Delta K_{i,j}^{\alpha\rho u,k}=\tfrac{\omega_kh}{2}d_t(\alpha_{i,j}^k\rho_{i,j}^ku_{i,j}^k)+\sum_{l=0}^p2\omega_kD_{kl}\tfrac{u_{i,j}^k+u_{i,j}^l}{2}h_i^{\alpha\rho}({\bf U}_j^k,{\bf U}_j^l),
\end{equation*}
\end{linenomath*}
with $h_i^{\alpha\rho}({\bf u}^-,{\bf u}^+)=\tfrac{1}{2}\big(h_{\rho_i}({\bf u}^-,{\bf u}^+)+h_{\rho_i}({\bf u}^+,{\bf u}^-)\big)\overset{(\cref{eqn: EC fluxes})}{=}\xoverline{\alpha}_i\xoverline{u}_i\hat\rho_i$. Introducing $K_{i,j}^k=\tfrac{1}{2}\alpha_{i,j}^k\rho_{i,j}^k(u_{i,j}^k)^2$, we have
\begin{linenomath*}
\begin{align*}
 \sum_{k=0}^p u_{i,j}^k\Delta K_{i,j}^{\alpha\rho u,k}-\tfrac{(u_{i,j}^k)^2}{2}\Delta K_{i,j}^{\alpha\rho,k} &= \sum_{k=0}^p \tfrac{\omega_kh}{2}d_t(K_{i,j}^k) + \sum_{k,l=0}^p2\omega_kD_{kl}\big(u_{i,j}^k\tfrac{u_{i,j}^k+u_{i,j}^l}{2}-\tfrac{(u_{i,j}^k)^2}{2}\big)h_i^{\alpha\rho}({\bf U}_j^k,{\bf U}_j^l) \\
=& d_t\langle K_i(\vecu_h)\rangle_j + \sum_{k,l=0}^p2\omega_kD_{kl}\tfrac{u_{i,j}^ku_{i,j}^l}{2}h_i^{\alpha\rho}({\bf U}_j^k,{\bf U}_j^l) \\
\overset{(\cref{eqn: SBP})}{=}& d_t\langle K_i(\vecu_h)\rangle_j + \sum_{k,l=0}^p\omega_kD_{kl}\tfrac{u_{i,j}^ku_{i,j}^l}{2}h_i^{\alpha\rho}({\bf U}_j^k,{\bf U}_j^l) - \sum_{k,l=0}^p\omega_lD_{lk}\tfrac{u_{i,j}^ku_{i,j}^l}{2}h_i^{\alpha\rho}({\bf U}_j^k,{\bf U}_j^l) + u_{i,j}^pK_{i,j}^p - u_{i,j}^0K_{i,j}^0 \\
=&  d_t\langle K_i(\vecu_h)\rangle_j + u_{i,j}^pK_{i,j}^p - u_{i,j}^0K_{i,j}^0,
\end{align*}
\end{linenomath*}
by symmetry of $h_i^{\alpha\rho}({\bf u}^-,{\bf u}^+)$, which concludes the proof.
\end{proof}

\subsection{Positivity of the numerical solution}
High-order time integration is made through the use of strong stability-preserving explicit Runge-Kutta schemes \cite{shu1988efficient} that are convex combinations of explicit first-order schemes in time. Therefore, we focus on the fully discrete scheme by using a one-step first-order explicit time discretization.

We use the notation $t^{(n)}=n\Delta t$ with $\Delta t>0$ the time step, and set $\lambda=\frac{\Delta t}{h}$, $\vecu^{(n)}_h(\cdot)= \vecu_h(\cdot, t^{(n)})$ and $\textbf{U}^{k,n}_j=\textbf{U}^k_j(t^{(n)})$. The fully discrete scheme reads
\begin{linenomath*}
\begin{equation} \label{Eqn: fully discrete residual}
    \frac{\omega_k}{2}(\textbf{U}^{k,n+1}_j-\textbf{U}^{k,n}_j) + \lambda\textbf{R}^k_j (\vecu^{(n)}_j) =0,
\end{equation}
\end{linenomath*}
where $\textbf{R}^k_j (\cdot)$ is defined in (\cref{eqn: residual}). Our analysis of the discrete scheme provides conditions on the numerical parameters that guarantee the positivity of the cell-averaged partial densities and a maximum principle on the cell-averaged void fraction. Unfortunately, we were not able to derive conditions for positivity of the partial internal energies, i.e., $\rho_ie_i>\mathrm{p}_{i,\infty}$, and we refer to \cite{coquel2017positive} for a first-order scheme that guaranties such condition. 

\begin{mythm}\label{thm: positivity}
Assume that $\rho^{0\leqslant k\leqslant p,n}_{i,j\in \mathbb{Z}} >0$, $\alpha^{0\leqslant k\leqslant p,n}_{i,j\in \mathbb{Z}} >0$ for $i=1,2$ and let $\beta_{s}$, in (\cref{eqn: EC fluxes}), be locally defined at element interfaces, then under the CFL condition
\begin{linenomath*}
\begin{equation}\label{CFL condition}
\begin{aligned}
    \lambda \max_{j\in\mathbb{Z}} \max_{i=1,2} \Bigg( &\max_{0\leqslant k\leqslant p} \frac{1}{\omega_k} \Bigg(\langle \uI_h^{(n)},d_x\phi^k_j\rangle^p_j + \delta_{kp}\frac{\beta_{s_{j+1/2}}-\uI^{p,n}_{j}}{2} + \delta_{k0}\frac{\beta_{s_{j-1/2}}+\uI^{0,n}_{j}}{2}\Bigg),\\
    &\frac{1}{\omega_0} \Bigg(\frac{(\beta_{s_{j-1/2}}-\xoverline{u}_{i,j-1/2})\hat{\rho}_{i,j-1/2}}{2\rho^{0,n}_{i,j}} + \frac{\epsilon_{\nu_{j-1/2}}}{\alpha^{0,n}_{i,j}}\Bigg), \frac{1}{\omega_p} \Bigg(\frac{(\beta_{s_{j+1/2}}+\xoverline{u}_{i,j+1/2})\hat{\rho}_{i,j+1/2}}{2\rho^{p,n}_{i,j}} + \frac{\epsilon_{\nu_{j+1/2}}}{\alpha^{p,n}_{i,j}}\Bigg) \Bigg) < \frac{1}{2},
\end{aligned}    
\end{equation}
\end{linenomath*}
where $\xoverline{u}_{i,j+1/2} = \tfrac{u^{p,n}_{i,j} + u^{0,n}_{i,j+1}}{2}$, $\hat{\rho}_{i,j+1/2}=\tfrac{\rho^{0,n}_{i,j+1}-\rho^{p,n}_{i,j}}{\ln{\rho^{0,n}_{i,j+1}}-\ln{\rho^{p,n}_{i,j}}}$, and
\begin{linenomath*}
\begin{equation}\label{eqn: beta_s}
\beta_{s_{j+1/2}} := \max_{i=1,2}(|u^{p,n}_{i,j}|,|u^{0,n}_{i,j+1}|),
\end{equation}
\end{linenomath*}
we have for the cell averaged solution at time $t^{(n+1)}$
\begin{linenomath*}
\begin{equation*}
    \langle\alpha_{i,h}\rho_{i,h}\rangle^{(n+1)}_j > 0, \quad \langle\alpha_{i,h}\rangle^{(n+1)}_j >0, \quad i=1,2, \quad j\in \mathbb{Z}.
\end{equation*}
\end{linenomath*}

Furthermore,
\begin{linenomath*}
\begin{equation}\label{discrete alpha1}
\begin{aligned}
    \langle\alpha_{i,h}\rangle^{(n+1)}_j = &\sum^p_{k=0} \left(\frac{\omega_k}{2}-\lambda\left(\langle \uI^{(n)}_{h},d_x\phi^k_j\rangle^p_j + \delta_{kp}\frac{\beta_{s_{j+1/2}}-\uI^{p,n}_{j}}{2} + \delta_{k0}\frac{\beta_{s_{j-1/2}}+\uI^{0,n}_{j}}{2} \right)\right)^{(n)}\alpha^{k,n}_{i,j} \\
    &+ \lambda\frac{\beta_{s_{j+1/2}}-\uI^{p,n}_{j}}{2}\alpha^{0,n}_{i,j+1} + \lambda\frac{\beta_{s_{j-1/2}}+\uI^{0,n}_{j}}{2}\alpha^{p,n}_{i,j-1}
\end{aligned}
\end{equation}
\end{linenomath*}
is a convex combination of DOFs at time $t^{(n)}$.
\end{mythm}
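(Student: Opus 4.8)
\emph{Proof strategy.} The plan is to sum the one-step scheme \cref{Eqn: fully discrete residual} over the nodes $0\leqslant k\leqslant p$ of $\kappa_j$ and invoke the conservative cell-averaged form \cref{Eqn: cell avg conserv} from \cref{theorem entropy ineq}. Since $\sum_k\textbf{R}^k_j=\langle\textbf{c}(\vecu_h),d_x\vecu_h\rangle^p_j+\textbf{f}(\textbf{U}^p_j)-\textbf{f}(\textbf{U}^0_j)+\D^-(\textbf{U}^p_j,\textbf{U}^0_{j+1})+\D^+(\textbf{U}^p_{j-1},\textbf{U}^0_j)$, summation of \cref{Eqn: fully discrete residual} yields
\[
\langle\vecu_h\rangle^{(n+1)}_j = \langle\vecu_h\rangle^{(n)}_j - \lambda\Big[\langle\textbf{c}(\vecu_h),d_x\vecu_h\rangle^p_j + \textbf{f}(\textbf{U}^p_j)-\textbf{f}(\textbf{U}^0_j) + \D^-(\textbf{U}^p_j,\textbf{U}^0_{j+1}) + \D^+(\textbf{U}^p_{j-1},\textbf{U}^0_j)\Big]^{(n)},
\]
and I would read off the first (void fraction) and second (partial density) components separately.

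For the void fraction the flux component vanishes, so only the first components of the interface fluctuation fluxes and of the volume nonconservative term survive. From \cref{eqn: EC fluxes}, \cref{Eqn: entropy stable defn}, and the vanishing of the first dissipation component, I would obtain $\D^-_{\alpha_i}(\vecu^-,\vecu^+)=\tfrac{\lb\alpha_i\rb}{2}(\uI^--\beta_s)$ and $\D^+_{\alpha_i}(\vecu^-,\vecu^+)=\tfrac{\lb\alpha_i\rb}{2}(\uI^++\beta_s)$, and rewrite the volume term by linearity of the discrete inner product as $\langle\uI_h,d_x\alpha_{i,h}\rangle^p_j=\sum_k\alpha^k_{i,j}\langle\uI_h,d_x\phi^k_j\rangle^p_j$. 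Collecting the coefficients of each degree of freedom then reproduces \cref{discrete alpha1} verbatim. To close, I would check the convex-combination property: the coefficients sum to one because $\sum_k\omega_k=2$ and $\sum_k d_x\phi^k_j\equiv0$ (from $\sum_k\phi^k_j\equiv1$, equivalently \cref{eq:sum_Dkl_vanishes}); the two neighbour coefficients are nonnegative because $\uI$ is a convex combination of $u_1,u_2$ in \cref{eqn: intref_var}, so $|\uI|\leqslant\beta_{s_{j\pm1/2}}$ by the choice \cref{eqn: beta_s}; and each in-cell coefficient is positive precisely under the first bracketed quantity of \cref{CFL condition}. Positivity of $\langle\alpha_{i,h}\rangle^{(n+1)}_j$ follows since $\alpha^{k,n}_i>0$, and the convex structure moreover yields a discrete maximum principle through the saturation constraint \cref{Eqn: void fraction assumption}.

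For the partial density the nonconservative component is zero and the symmetrised $\beta_s$ contributions drop out of the volume term (see \cref{rk:LD_dissip}), so the update involves only the conservative flux $\textbf{f}_{\alpha_i\rho_i}=\alpha_i\rho_iu_i$ and the second components of $\D^\pm$. Using $\D^\pm_{ec}\pm\D_\nu$ together with \cref{eqn: EC fluxes} and \cref{eqn final dissipation}, I would note $\D^-_{\alpha_i\rho_i}+\textbf{f}_{\alpha_i\rho_i}(\vecu^-)=h_{\alpha_i\rho_i}-(\D_\nu)_{\alpha_i\rho_i}$ and $\D^+_{\alpha_i\rho_i}-\textbf{f}_{\alpha_i\rho_i}(\vecu^+)=-h_{\alpha_i\rho_i}+(\D_\nu)_{\alpha_i\rho_i}$, with $h_{\alpha_i\rho_i}=\hat\rho_i\big(\xoverline{\alpha}_i\xoverline{u}_i-\beta_s\tfrac{\lb\alpha_i\rb}{2}\big)$. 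The key structural fact is that the boundary node $k=p$ couples only to the interface $j+\tfrac12$, the node $k=0$ only to $j-\tfrac12$, and the interior nodes only through the cell average \cref{cell avg defn}; the interior contributions $\tfrac{\omega_k}{2}\alpha^k_{i,j}\rho^k_{i,j}$ and the two neighbour contributions are manifestly nonnegative because $\beta_s\geqslant|\xoverline{u}_i|$ and $\epsilon_\nu\geqslant0$. For the two boundary nodes I would factor out the positive density and divide by the positive void fraction, turning nonnegativity of their coefficients into the second and third bracketed bounds of \cref{CFL condition} (with $\epsilon_{\nu_{j\pm1/2}}$ standing for $\tfrac{\epsilon_\nu}{2}\max(\rho_\textbf{A}(\vecu^-),\rho_\textbf{A}(\vecu^+))$). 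As all contributions are nonnegative and the cell-$j$ ones are strictly positive, $\langle\alpha_{i,h}\rho_{i,h}\rangle^{(n+1)}_j>0$ follows.

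The main obstacle is the logarithmic mean $\hat\rho_i$ in $h_{\alpha_i\rho_i}$: unlike the void fraction, the density update is genuinely nonlinear in the degrees of freedom and cannot be cast as a clean convex combination. The resolution is to use only positivity of $\hat\rho_i$ (a mean of positive densities) and to keep the ratios $\hat\rho_{i,j\pm1/2}/\rho^{0,p}_{i,j}$ and $\epsilon_\nu/\alpha_{i,j}$ intact rather than bounding them, since these are exactly the quantities in terms of which \cref{CFL condition} is written. Beyond this, the remaining work is careful bookkeeping: tracking which node feeds which interface and handling the sign conventions of $\D^\pm=\D^\pm_{ec}\pm\D_\nu$ and of the jumps $\lb\cdot\rb$ consistently.
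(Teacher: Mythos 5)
Your proposal is correct and follows essentially the same route as the paper's proof: summing the one-step scheme over the Gauss--Lobatto nodes, exploiting the telescoping of the symmetrized volume term for the mass component (and the non-telescoped nonconservative term $\langle \uI_h, d_x\phi^k_j\rangle^p_j$ for the void fraction), and then rewriting the cell-average update as a combination of DOFs whose coefficients are nonnegative exactly under the stated CFL condition, with $\beta_{s_{j\pm1/2}}\geqslant|\uI|,|\xoverline{u}_i|$ and positivity of the logarithmic mean handling the neighbour contributions. The only cosmetic difference is that you invoke the already-established identity (\cref{Eqn: cell avg conserv}) rather than recomputing the nodal sums componentwise, which changes nothing of substance.
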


\begin{proof}
Summing over $0\leqslant k\leqslant p$ the first component of (\cref{Eqn: fully discrete residual}) for the void fraction we obtain
\begin{linenomath*}
\begin{equation*}\label{eqn: euler alpha}
\begin{aligned}
    \langle \alpha_{i,h}\rangle^{(n+1)}_j :=& \sum^p_{k=0} \frac{\omega_k}{2} \alpha^{k,n+1}_{i,j}\\
    =& \sum^p_{k=0} \frac{\omega_k}{2} \alpha^{k,n}_{i,j}- \lambda \Big( \sum_{l=0}^p\omega_kD_{kl}\uI^{k,n}_{j}\alpha^{l,n}_{i,j}  + \delta_{kp} \frac{\uI^{p,n}_{j}-\beta_{s_{j+1/2}}}{2}(\alpha^{0,n}_{i,j+1}-\alpha^{p,n}_{i,j}) + \delta_{k0}\frac{\uI^{0,n}_{j}+\beta_{s_{j-1/2}}}{2}(\alpha^{0,n}_{i,j}-\alpha^{p,n}_{i,j-1}) \Big)\\
    \overset{(\cref{eq:1d_inner_product})}{=}& \sum^{p-1}_{k=1}\left(\frac{\omega_k}{2}-\lambda\langle \uI^{(n)}_{h},d_x\phi^k_j\rangle^p_j\right)\alpha^{k,n}_{i,j} + \left(\frac{\omega_0}{2}-\lambda\left(\langle \uI^{(n)}_{h},d_x\phi^0_j\rangle^p_j+\frac{\beta_{s_{j-1/2}}+\uI^{0,n}_{j}}{2}\right)\right)\alpha^{0,n}_{i,j} \\
    &+\left(\frac{\omega_p}{2} -\lambda\left(\langle \uI^{(n)}_{h},d_x\phi^p_j\rangle^p_j+\frac{\beta_{s_{j+1/2}}-\uI^{p,n}_{j}}{2} \right)\right)\alpha^{p,n}_{i,j} + \lambda\frac{\beta_{s_{j-1/2}}+\uI^{0,n}_{j}}{2}\alpha^{p,n}_{i,j-1} + \lambda\frac{\beta_{s_{j+1/2}}-\uI^{p,n}_{j}}{2}\alpha^{0,n}_{i,j+1},
\end{aligned}
\end{equation*}
\end{linenomath*}
which is a convex combination of DOFs at time $n$ with (\cref{eqn: beta_s}) and the following restriction on the time-step:
\begin{linenomath*}
\begin{equation*}\label{eqn: CFL condn alpha}
   \lambda \left(\langle \uI^{(n)}_{h},d_x\phi^k_j\rangle^p_j + \delta_{kp}\frac{\beta_{s_{j+1/2}} - \uI^{p,n}_{j}}{2} + \delta_{k0}\frac{\beta_{s_{j-1/2}} + \uI^{0,n}_{j}}{2} \right) < \frac{\omega_k}{2}, \quad 0\leqslant k\leqslant p,
\end{equation*}
\end{linenomath*}
since from (\cref{eqn: intref_var}) we have $\beta_{s_{j+1/2}} \geqslant \max(|\uI^{p,n}_{j}|,|\uI^{0,n}_{j+1}|)$.

For the cell-averaged partial densities, we use a similar technique to \cite{zhang2010positivity,perthame1996positivity} and sum over $0\leqslant k\leqslant p$ the second component in (\cref{Eqn: fully discrete residual}) for the partial densities to get
\begin{linenomath*}
\begin{equation*}
    \displaystyle
    \begin{aligned}
    \langle\alpha_{i,h}\rho_{i,h}\rangle^{(n+1)}_j =& \sum^{p}_{k=0} \frac{\omega_k}{2} \alpha^{k,n}_{i,j}\rho^{k,n}_{i,j} - \lambda\Bigg( \bigg(\xoverline{u}_{i,j+1/2} \frac{\alpha^{p,n}_{i,j} + \alpha^{0,n}_{i,j+1}}{2} -\frac{\beta_{s_{j+1/2}}}{2}\big(\alpha^{0,n}_{i,j+1}-\alpha^{p,n}_{i,j}\big)\bigg)\hat{\rho}_{i,j+1/2} - \epsilon_{\nu_{i,j+1/2}}\big(\rho^{0,n}_{i,j+1} - \rho^{p,n}_{i,j}\big)\Bigg)\\
		&+ \lambda\Bigg(\bigg(\xoverline{u}_{i,j-1/2}\frac{\alpha^{p,n}_{i,j-1}+\alpha^{0,n}_{i,j}}{2}-\frac{\beta_{s_{j-1/2}}}{2}\big(\alpha^{0,n}_{i,j} - \alpha^{p,n}_{i,j-1}\big)\bigg)\hat{\rho}_{i,j-1/2} - \epsilon_{\nu_{i,j-1/2}}\big(\rho^{0,n}_{i,j}-\rho^{p,n}_{i,j-1}\big)\Bigg) \\
    =& \sum^{p-1}_{k=1} \frac{\omega_k}{2} \alpha^{k,n}_{i,j}\rho^{k,n}_{i,j} \\
		&+ \Bigg(\frac{\omega_p}{2} - \lambda \bigg(\frac{\beta_{s_{j+1/2}}+\xoverline{u}_{i,j+1/2}}{2}\frac{\hat{\rho}_{i,j+1/2}}{\rho^{p,n}_{i,j}} + \frac{\epsilon_{\nu_{i,j+1/2}}}{\alpha^{p,n}_{i,j}}\bigg) \Bigg)\alpha^{p,n}_{i,j}\rho^{p,n}_{i,j} + \lambda\bigg(\frac{\beta_{s_{j+1/2}}-\xoverline{u}_{i,j+1/2}}{2}\frac{\hat{\rho}_{i,j+1/2}}{\rho^{0,n}_{i,j+1}} + \frac{\epsilon_{\nu_{i,j+1/2}}}{\alpha^{0,n}_{i,j+1}}\bigg)\alpha^{0,n}_{i,j+1}\rho^{0,n}_{i,j+1} \\
		&+ \Bigg(\frac{\omega_0}{2} - \lambda \bigg(\frac{\beta_{s_{j-1/2}}-\xoverline{u}_{i,j-1/2}}{2}\frac{\hat{\rho}_{i,j-1/2}}{\rho^{0,n}_{i,j}} + \frac{\epsilon_{\nu_{i,j-1/2}}}{\alpha^{0,n}_{i,j}}\bigg) \Bigg)\alpha^{0,n}_{i,j}\rho^{0,n}_{i,j} + \lambda\bigg(\frac{\beta_{s_{j-1/2}}+\xoverline{u}_{i,j-1/2}}{2}\frac{\hat{\rho}_{i,j-1/2}}{\rho^{p,n}_{i,j-1}} + \frac{\epsilon_{\nu_{i,j-1/2}}}{\alpha^{p,n}_{i,j-1}}\bigg)\alpha^{p,n}_{i,j-1}\rho^{p,n}_{i,j-1} \\
    \end{aligned}
\end{equation*}
\end{linenomath*}

and is positive if 
\begin{linenomath*}
\begin{equation*}\label{eqn: CFL condn partial density}
 \lambda \bigg(\frac{\beta_{s_{j-1/2}}-\xoverline{u}_{i,j-1/2}}{2}\frac{\hat{\rho}_{i,j-1/2}}{\rho^{0,n}_{i,j}} + \frac{\epsilon_{\nu_{i,j-1/2}}}{\alpha^{0,n}_{i,j}}\bigg) \leqslant\frac{\omega_0}{2}, \quad \lambda \bigg(\frac{\beta_{s_{j+1/2}}+\xoverline{u}_{i,j+1/2}}{2}\frac{\hat{\rho}_{i,j+1/2}}{\rho^{p,n}_{i,j}} + \frac{\epsilon_{\nu_{i,j+1/2}}}{\alpha^{p,n}_{i,j}}\bigg) \leqslant\frac{\omega_p}{2},
\end{equation*}
\end{linenomath*}

provided $\epsilon_{\nu_{i,j\pm 1/2}} \geqslant 0$ and (\cref{eqn: beta_s}).
\end{proof}

\subsection{A posteriori limiters}
The properties of \cref{thm: positivity} hold only for the cell averaged value of the numerical solution at time $t^{(n+1)}$, which can be extended to nodal values by using a posteriori limiters \cite{zhang2010maximum, zhang2010positivity}. We here limit the void fraction with the bounds of its initial value over the whole domain, while we enforce positivity of the partial densities, similar to \cite{renac2019entropy}. The limiter reads
\begin{linenomath*}
\begin{equation}\label{eq:limiter}
    \Tilde{\textbf{U}}^{k,n+1}_j = \theta_j \big(\textbf{U}^{k,n+1}_j - \langle \vecu_h\rangle^{(n+1)}_j\big) + \langle \vecu_h\rangle^{(n+1)}_j, \quad 0\leqslant k\leqslant p, \quad j\in\mathbb{Z},
\end{equation}
\end{linenomath*}
with $0\leqslant \theta_j \leqslant 1$ defined by $\theta_j:=\min (\theta^{\;\rho_i}_j, \theta^{\;\alpha_i}_j : i = 1,2)$ where
\begin{linenomath*}
\begin{equation}\label{eqn: limiter}
    \begin{aligned}
        \theta^{\;\rho_i}_j &= \min \left(\frac{\langle \alpha_{i,h}\rho_{i,h}\rangle^{(n+1)}_j-\epsilon}{\langle \alpha_{i,h}\rho_{i,h}\rangle^{(n+1)}_j -  (\alpha_i\rho_i)^{min}_{j}}, 1 \right), \quad (\alpha_i\rho_i)^{min}_{j} = \min_{0\leqslant k\leqslant p} (\alpha_{i}\rho_{i})^{k,n+1}_j,\\
        \theta^{\;\alpha_i}_j &= \min \left(\frac{\langle \alpha_{i,h}\rangle^{(n+1)}_j-m^{\alpha}_{i,j}}{\langle \alpha_{i,h}\rangle^{(n+1)}_j -  \alpha^{min}_{i,j}}, \frac{M^{\alpha}_{i,j}-\langle\alpha_{i,h}\rangle^{(n+1)}_j}{\alpha^{max}_{{i,j}}-\langle \alpha_{i,h}\rangle^{(n+1)}_j}, 1 \right), \quad \alpha^{min}_{i,j} = \min_{0\leqslant k\leqslant p} \alpha^{k,n+1}_{i,j}, \quad \alpha^{max}_{i,j} = \max_{0\leqslant k\leqslant p} \alpha^{k,n+1}_{i,j},
    \end{aligned}
\end{equation}
\end{linenomath*}
$0< \epsilon \ll 1$ is a parameter (we set $\epsilon = 10^{-8}$ in our numerical tests), and 
\begin{linenomath*}
\begin{equation*}
    m^{\alpha}_{i,j} = \min_{j\in\mathbb{Z}}\min_{0\leqslant k\leqslant p} \alpha^{k,0}_{i,j}, \quad M^{\alpha}_{i,j} = \max_{j\in\mathbb{Z}}\max_{0\leqslant k\leqslant p}\alpha^{k,0}_{i,j}.
\end{equation*}
\end{linenomath*}
The limiter (\cref{eqn: limiter}) guarantees that $\Tilde{\rho}^{0\leqslant k\leqslant p,n+1}_j >0$ together with the following bounds on the void fractions $m^\alpha_{i,j} \leqslant \Tilde{\alpha}^{0\leqslant k \leqslant p, n+1}_{i,j} \leqslant M^\alpha_{i,j}$. 

%
%
\section{Numerical tests in one space dimension} \label{sec: Numerical tests}
In this section we assess the high-order accuracy, robustness, and nonlinear stability of the numerical scheme for the Baer-Nunziato model by considering numerical tests for the initial value problem (\cref{Eqn: Cauchy prob}). We use $\uI = u_2$ and $\pI = \press_1$ as interfacial variables in (\cref{eqn: intref_var}). Unless stated otherwise, all numerical tests are performed with fourth order accuracy in space, $p=3$, on a unit domain $\Omega = [-0.5,0.5]$ discretized with a uniform mesh of 100 cells. The values of the numerical dissipation parameter $\epsilon_\nu$ in (\cref{eqn final dissipation}) lie in the range $[0.1,0.5]$. The time integration is performed by using the three-stage third-order strong stability-preserving Runge-Kutta scheme by Shu and Osher \cite{shu1988efficient}. The limiter (\cref{eq:limiter}) is applied at the end of each stage. The time step is computed through (\cref{CFL condition}). The numerical experiments of \cref{sec: Numerical tests,sec: extension to multi-space dim} have been obtained with the CFD code {\it Aghora} developed at ONERA \cite{renac2015aghora}.

\subsection{Advection of density and void fraction waves}\label{ssec: density wave}
We first test the high-order accuracy of the scheme (\cref{eqn: modified DG semi-discrete}). Let us consider a unit domain with periodic conditions and the following initial condition $\vecu_0(x)$
\begin{linenomath*}
\begin{equation*}
 \alpha_{1,0}(x) = \frac{1}{2} + \frac{1}{4} \sin (4\pi x),\quad
 \rho_{i,0}(x)   = 1 + \frac{1}{2} \sin (2\pi x),\quad
 u_{i,0}(x)      = 1,\quad
 \press_{i,0}(x) = 1, \quad i=1,2,
\end{equation*}
\end{linenomath*}
which results in a density wave and a void fraction wave with different frequencies and amplitudes that are purely advected in a uniform flow. The EOS parameters in (\cref{Eqn: EOS}) are $\gamma_i = 1.4$ and $\press_{\infty_i} = 10$ for $i=1,2$. 

\Cref{tab: high-order accuracy} indicates the values of the norms of the error on $\tfrac{1}{2}(\rho_{1}+\rho_2)$ obtained at final time $T_{max} = 5$ with different polynomial degrees and grid refinements, as well as the associated orders of convergence. We observe, as the mesh is refined, that the expected $p+1$ order of convergence is recovered with the present scheme. 
\begin{table}[ht]
    \centering
    \begin{tabulary}{1.0\textwidth}{ c|l|*{6}{c} }\hline
        $p$ & $h$ & $\norm{e_h}_{L^1(\Omega_h)}$ & $\mathcal{O}_1$ & $\norm{e_h}_{L^2(\Omega_h)}$ & $\mathcal{O}_2$ & $\norm{e_h}_{L^\infty(\Omega_h)}$ & $\mathcal{O}_\infty$\\ \hline
         \multirow{4}{*}{1}
         & 1/8  & 0.46E+00 & -    & 0.52E+00 & -    & 0.74E+00 & -\\
         & 1/16 & 0.18E+00 & 1.36 & 0.20E+00 & 1.39 & 0.30E+00 & 1.31\\
         & 1/32 & 0.40E-01 & 2.16 & 0.48E-01 & 2.04 & 0.98E-01 & 1.62\\
         & 1/64 & 0.92E-02 & 2.11 & 0.12E-01 & 2.02 & 0.32E-01 & 1.63\\ \hline
        \multirow{4}{*}{2}
        & 1/8   & 0.54E-01 & -    & 0.65E-01 & -    & 0.13E+00 & -\\
        & 1/16  & 0.88E-02 & 2.61 & 0.12E-01 & 2.45 & 0.27E-01 & 2.30\\
        & 1/32  & 0.16E-02 & 2.42 & 0.22E-02 & 2.46 & 0.48E-02 & 2.48\\
        & 1/64  & 0.21E-03 & 3.00 & 0.28E-03 & 2.99 & 0.63E-03 & 2.94\\ \hline
        \multirow{4}{*}{3}
        & 1/8  & 0.89E-02 & -    & 0.11E-01 & -    & 0.25E-01 & -\\
        & 1/16 & 0.34E-03 & 4.69 & 0.41E-03 & 4.77 & 0.76E-03 & 5.03\\
        & 1/32 & 0.18E-04 & 4.22 & 0.24E-04 & 4.12 & 0.52E-04 & 3.85\\
        & 1/64 & 0.11E-05 & 4.03 & 0.15E-05 & 4.01 & 0.32E-05 & 3.99\\ \hline
    \end{tabulary}
    \caption{Test for high-order accuracy: different norms of the error on densities under $p$- and $h$-refinements and associated orders of convergence.}
    \label{tab: high-order accuracy}
\end{table}

\subsection{Riemann Problems}
We now consider a series of Riemann problems from \cite{bohm2018entropy,tokareva2010hllc,coquel2017positive} to assess the entropy conservation, robustness, and stability properties of the present scheme. The initial condition reads
\begin{linenomath*}
\begin{equation*}
    \vecu_0(x)=
    \begin{cases}
    \vecu_L, \quad x<x_0,\\
    \vecu_R, \quad x>x_0.
    \end{cases}
\end{equation*}
\end{linenomath*}
\cref{table: RP_IC} contains the initial conditions for the different Riemann problems, while the physical parameters are given in \cref{table: EOS_param}.

\begin{table}[ht]
\centering
    \begin{tabulary}{1.0\textwidth}{ c|*{8}{c} }\hline
    Test case & &$\alpha_1$ & $\rho_1$ & $u_1$ & $\press_1$ & $\rho_2$ & $u_2$ & $\press_2$\\
    \hline
    \multirow{2}{*}{EC}
     &$\vecu_L$ & 0.5 & 1.0 & 0.0 & 1.0 & 1.0 & 0.0 & 1.0\\ 
     &$\vecu_R$ & 0.5 & 1.125 & 0.0 & 1.1 & 1.125 & 0.0 & 1.1\\ \hline
    \multirow{2}{*}{RP1}
     &$\vecu_L$ & 0.1 & 1.0 & 1.0 & 1.0 & 1.5 & 1.0 & 1.0\\ 
     &$\vecu_R$ & 0.9 & 2.0 & 1.0 & 1.0 & 1.0 & 1.0 & 1.0\\ \hline
     \multirow{2}{*}{RP2}
     &$\vecu_L$ & 0.8 &2.0 & 0.0 & 3.0 & 1900.0 & 0.0 & 10.0 \\  
     &$\vecu_R$ & 0.1 &1.0 & 0.0 & 1.0 & 1950.0 & 0.0 & 1000.0 \\ \hline
     \multirow{2}{*}{RP3}
     &$\vecu_L$ & 0.2 & 0.99988 & -1.99931 & 0.4 & 0.99988 & -1.99931 & 0.4\\ 
     &$\vecu_R$ & 0.5 & 0.99988 & 1.99931 & 0.4 & 0.99988 & 1.99931 & 0.4\\
     \hline
     \multirow{2}{*}{RP4}
     &$\vecu_L$ & 0.3 & 1.0 & -19.59741 & 1000.0 & 1.0 & -19.59716 & 1000.0\\ 
     &$\vecu_R$ & 0.8 & 1.0 & -19.59741 & 0.01 & 1.0 & -19.59741 & 0.01\\
     \hline
     \multirow{2}{*}{RP5}
     &$\vecu_L$ & 0.999 & 1.6 & 1.79057 &  5.0 & 2.0     & 1.0     & 10.0\\ 
     &$\vecu_R$ & 0.001 & 2.0 & 1.0     & 10.0 & 2.67183 & 1.78888 & 15.0\\
     \hline
    \end{tabulary}
    \caption{Initial conditions for the Riemann problems.}
    \label{table: RP_IC}
\end{table}

\begin{table}[ht]
\centering
    \begin{tabulary}{1\textwidth}{ c|*{7}{c} }\hline
                   & EC   & RP1  & RP2  & RP3   & RP4 & RP5\\ \hline
    $x_0$          & 0.0  & 0.0  & 0.0  & 0.0   & 0.3 & 0.0\\ 
    $T_{max}$      & 0.15 & 0.25 & 0.15   & 0.15 & 0.007 & 0.05\\
    $\gamma_1$     & 1.4  & 3.0  & 1.35    & 1.4  & 1.4   & 3.0\\
    $\gamma_2$     & 1.4  & 1.4  & 3.0     & 1.4  & 3.0   & 1.4\\
    $\press_{\infty_1}$ & 0.1  & 0.1  & 0.0    & 0.0  & 0.0   & 0.0\\
    $\press_{\infty_2}$ & 0.0  & 0.0  & 3400.0 & 0.0  & 100.0 & 0.0\\
    \hline
    \end{tabulary}
    \caption{Location of discontinuity on $\Omega_h$, final time, EOS parameters from (\cref{Eqn: EOS}).}
    \label{table: EOS_param}
\end{table}

\subsubsection{Test for entropy conservation}\label{sssec: entropy conservation}
The property of entropy conservation of the numerical fluxes (\cref{Eqn: fluctuation flux}) in the modified scheme (\cref{eqn: modified DG semi-discrete}) is validated based from the experimental setup introduced in \cite{bohm2018entropy}. Here we only focus on entropy conservative fluxes, so we choose $\epsilon_\nu = 0$ in (\cref{eqn final dissipation}). The initial condition corresponds to the test case EC in \cref{table: RP_IC} which generates discontinuities of moderate strength in each phase. We impose periodic boundary conditions and the global entropy should remain constant over the computational domain, while being modified only as a result of the time integration. We thus introduce the entropy budget
\begin{linenomath*}
\begin{equation}\label{eqn: entropy_budget}
    \mathcal{E}_{\Omega_h}(t):=h\big|\sum_{\kappa_j\in\Omega_h} \langle\eta(\vecu_h) \rangle_j - \langle\eta(\vecu_0)\rangle_j\big|,
\end{equation}
\end{linenomath*}
which evaluates the variations in the computation of the cell-averaged entropy over the domain $\Omega_h$. The results in \cref{table: EC_results} show that the error (\cref{eqn: entropy_budget}) decreases to machine accuracy when refining the time step, with the order of convergence corresponding to the theoretical approximation order of the time integration scheme. This validates the entropy conservation of the numerical fluxes (\cref{Eqn: fluctuation flux}).

\begin{table}[ht]
\centering
 \begin{tabular}{l |c |c} 
 \hline
 time step & $\mathcal{E}_{\Omega_h}(t)$ & $\mathcal{O}$ \\ [0.5ex] 
 \hline
 $\Delta t$    & 6.85E-06 & -- \\ 
 $\Delta t/2$  & 2.08E-06 & 2.94\\
 $\Delta t/4$  & 2.65E-07 & 2.97 \\
 $\Delta t/8$  & 3.31E-08 & 2.99 \\
 $\Delta t/16$ & 4.14E-09 & 3.00 \\ 
 $\Delta t/32$ & 5.14E-10 & 3.00 \\
 \hline
\end{tabular}
\caption{Global entropy budget and the corresponding order of convergence
$\mathcal{O}$ when refining the time step.}
\label{table: EC_results}
\end{table}

\subsubsection{Riemann problems}

The results of the Riemann problems in \cref{table: RP_IC} are shown in \crefrange{result: abgrall criterion}{result: RP5}, where we compare the numerical results with the exact solutions from \cite{tokareva2010hllc,coquel2017positive}. 

Here the test RP1 consists in the advection of a material interface in a uniform flow and the results in \cref{result: abgrall criterion} show that the velocity and pressure of both phases remain uniform in time which may be related to the so-called criterion of Abgrall \cite{abgrall1996prevent}. The observed smearing of the contact is a consequence of the limiter (\cref{eqn: limiter}) which is a common remark for all Riemann problems that we will encounter here.

The results for tests RP2 and RP 3 in \cref{result: RP2,result: RP3} contain the development of shocks, rarefaction and contacts in both phases. The scheme captures the correct solutions, but the intermediate states contain small oscillations at the shock and rarefaction waves in phase 1 of RP2. The scheme also proves to maintain the positivity of the partial densities in the near vacuum region of RP3, see \cref{result: RP3}. 

The capabilities of the scheme to resolve strong shocks are demonstrated in \cref{result: RP4} for the RP4 test case. Here the left-traveling rarefaction waves and the material discontinuity are well captured, whereas small oscillations are observed around the right-traveling shock in both phases. A possible reason could be that, as the dissipation is introduced in the numerical scheme through the interfaces, the internal DOFs may suffer from a lack of stabilization mechanism.

Finally, the test case RP5 probes the numerical scheme close to resonance (\cref{eqn: resonance}) mimicking pure phases separated by a material interface. Note that  we do not consider pure phases in this work and restrict ourselves to conditions close to resonance. Following \cite{coquel2017positive}, we indicate in \cref{result: RP5} the regions where the corresponding phases are relevant. The results show a correct approximation of the intermediate states where either phase exists, while spurious oscillations occur but in regions where the corresponding phase is absent.

\begin{figure}[H]
 \center
 \subfloat[$\alpha_{1}$]{
    \includegraphics[height=.20\paperwidth,trim=0.2cm 0.2cm 0.2cm 0.2cm,clip=true]{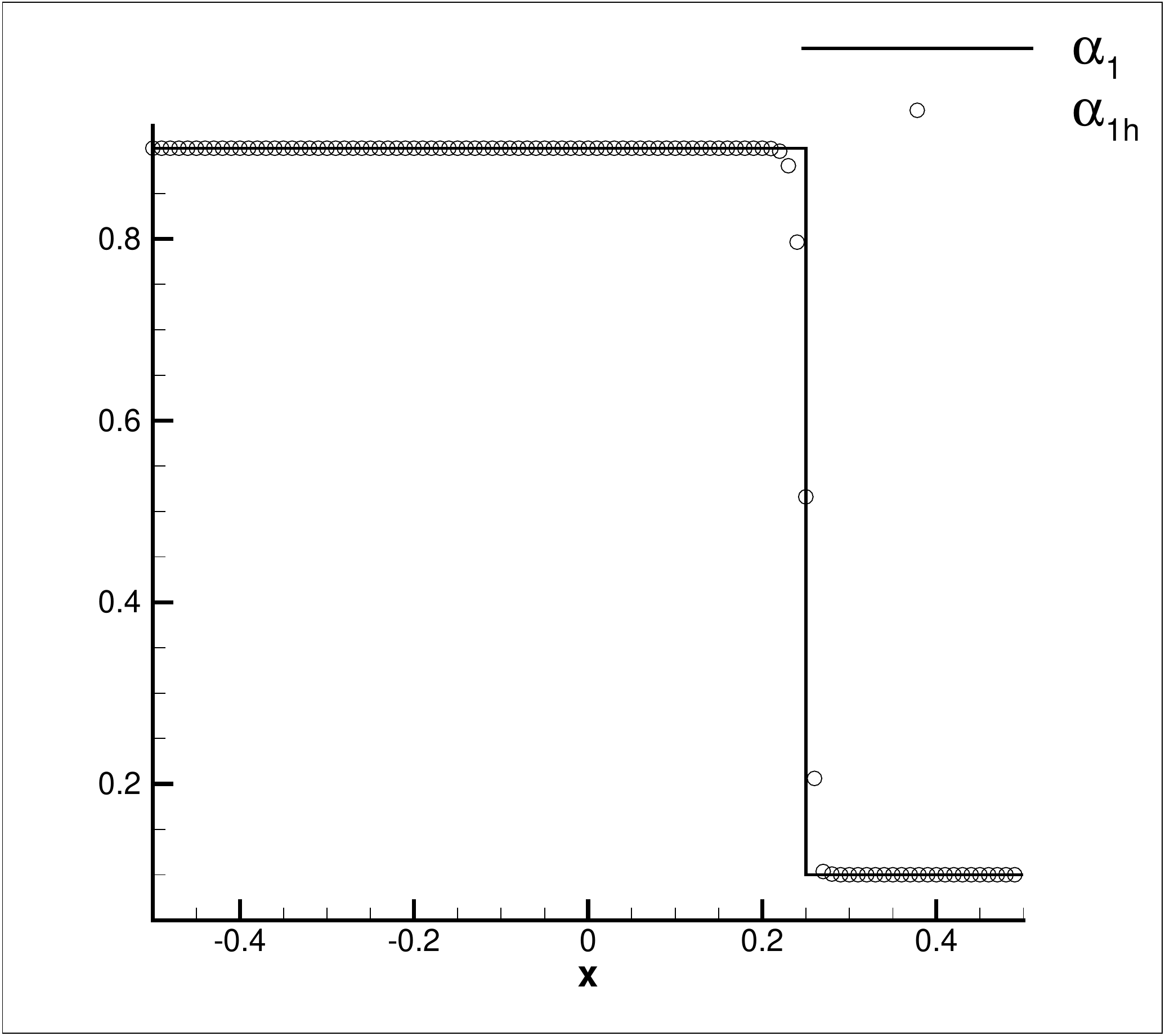}} \\
 \subfloat[$\rho_{1}$]{
  \includegraphics[height=.20\paperwidth,trim=0.2cm 0.2cm 0.2cm 0.2cm,clip=true]{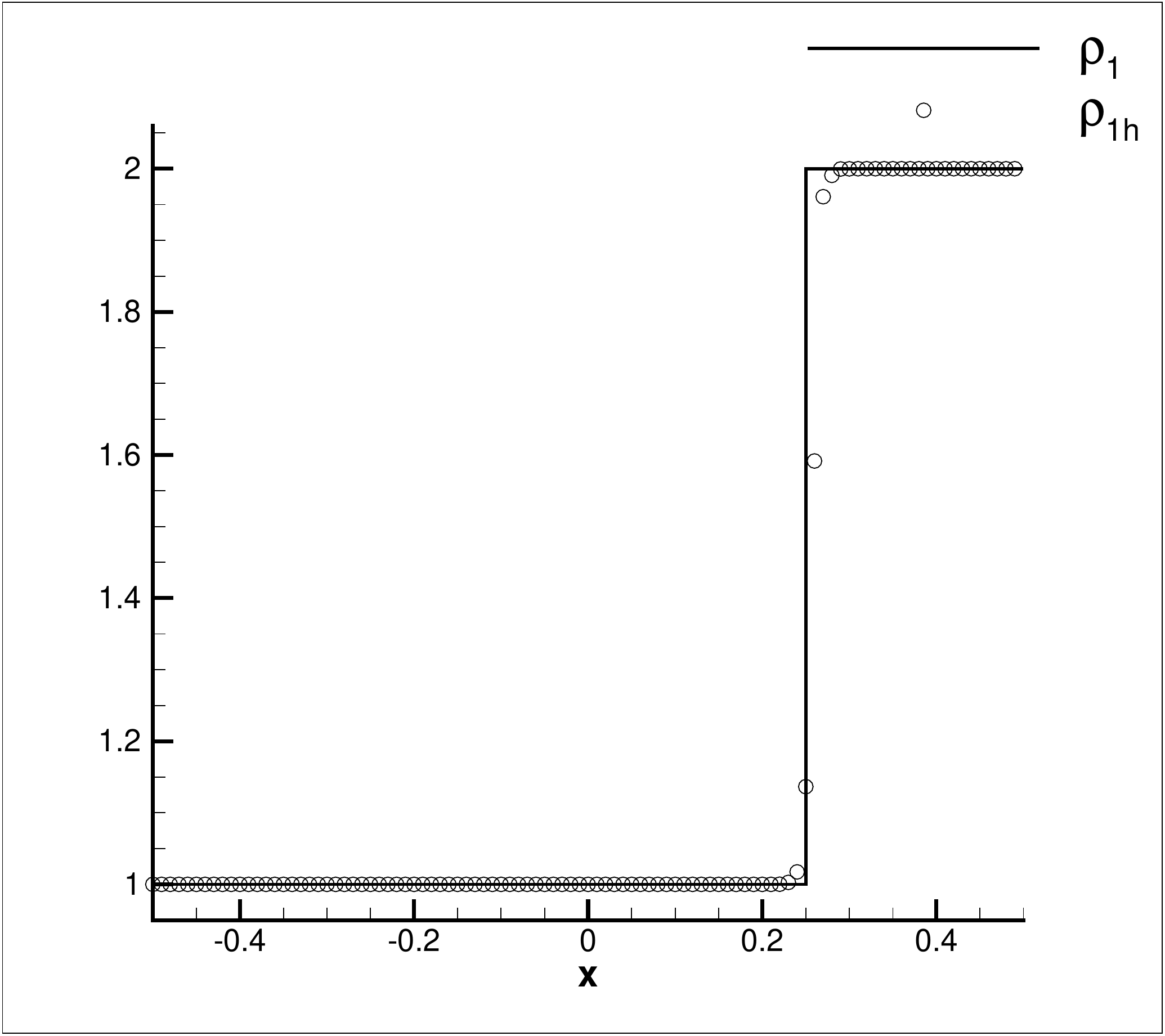}}
  \subfloat[$u_{1}$]{
  \includegraphics[height=.20\paperwidth,trim=0.2cm 0.2cm 0.2cm 0.2cm,clip=true]{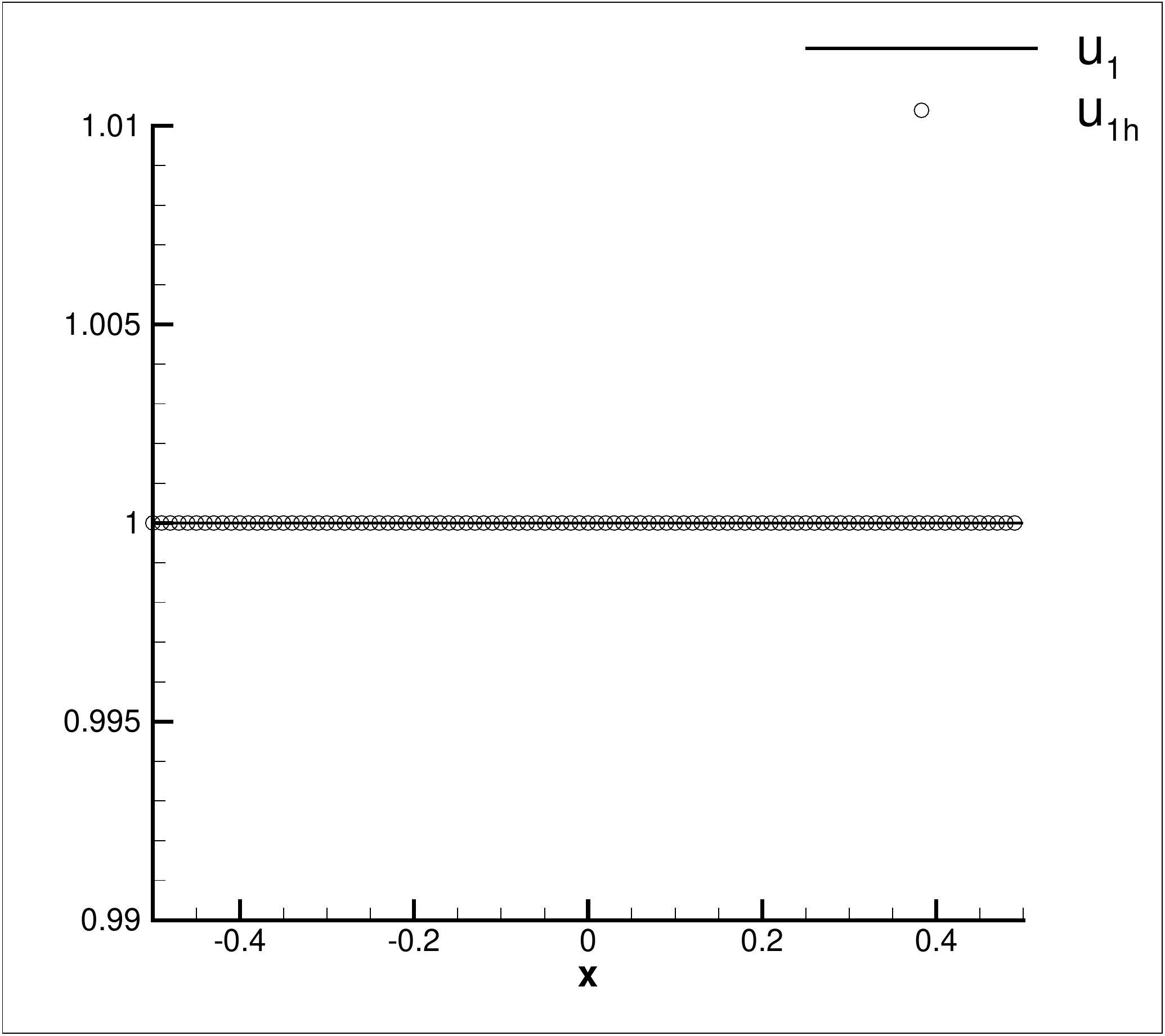}}
  \subfloat[$p_{1}$]{
  \includegraphics[height=.20\paperwidth,trim=0.2cm 0.2cm 0.2cm 0.2cm,clip=true]{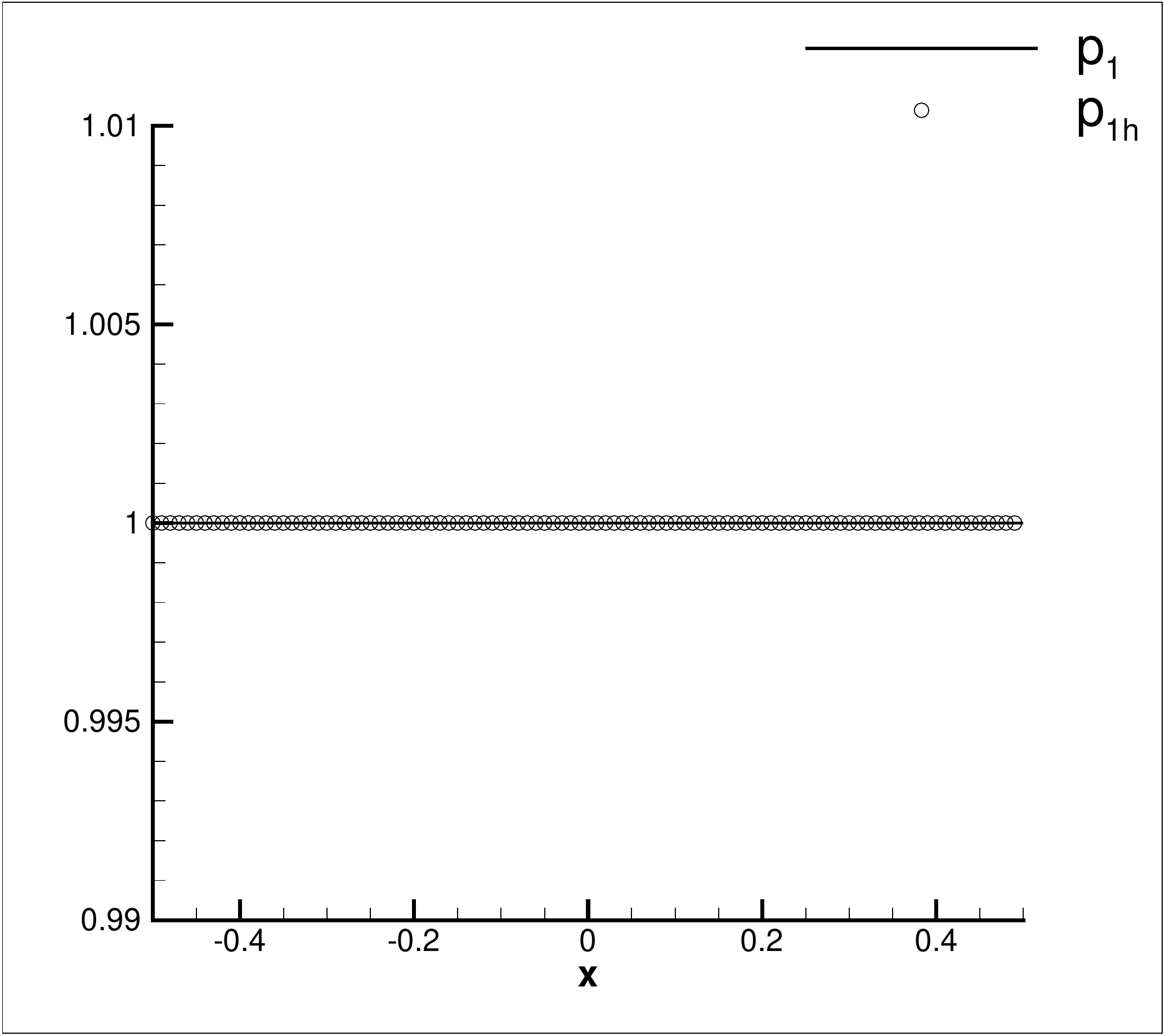}} \\
 \subfloat[$\rho_{2}$]{
  \includegraphics[height=.20\paperwidth,trim=0.2cm 0.2cm 0.2cm 0.2cm,clip=true]{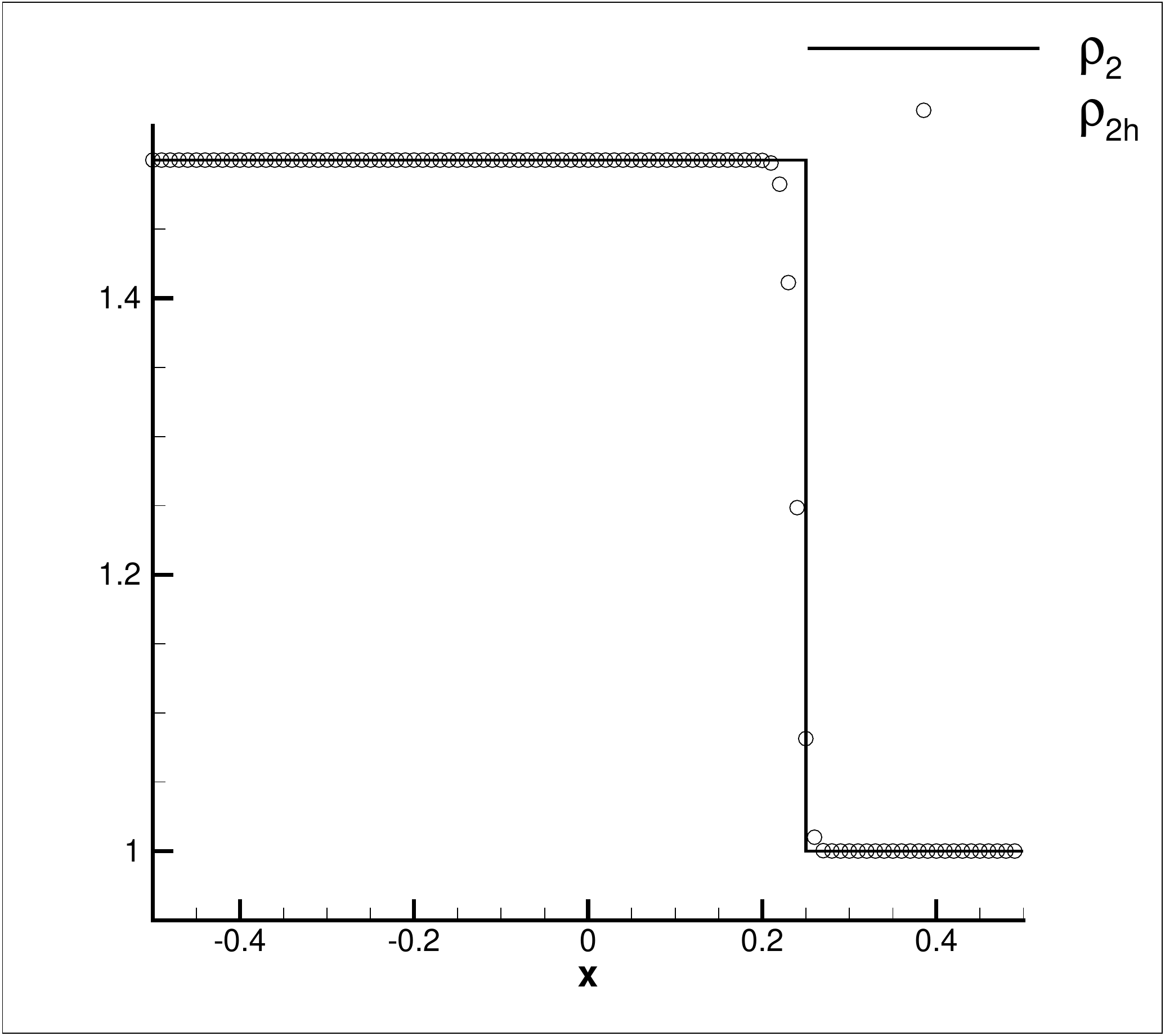}}
  \subfloat[$u_{2}$]{
  \includegraphics[height=.20\paperwidth,trim=0.2cm 0.2cm 0.2cm 0.2cm,clip=true]{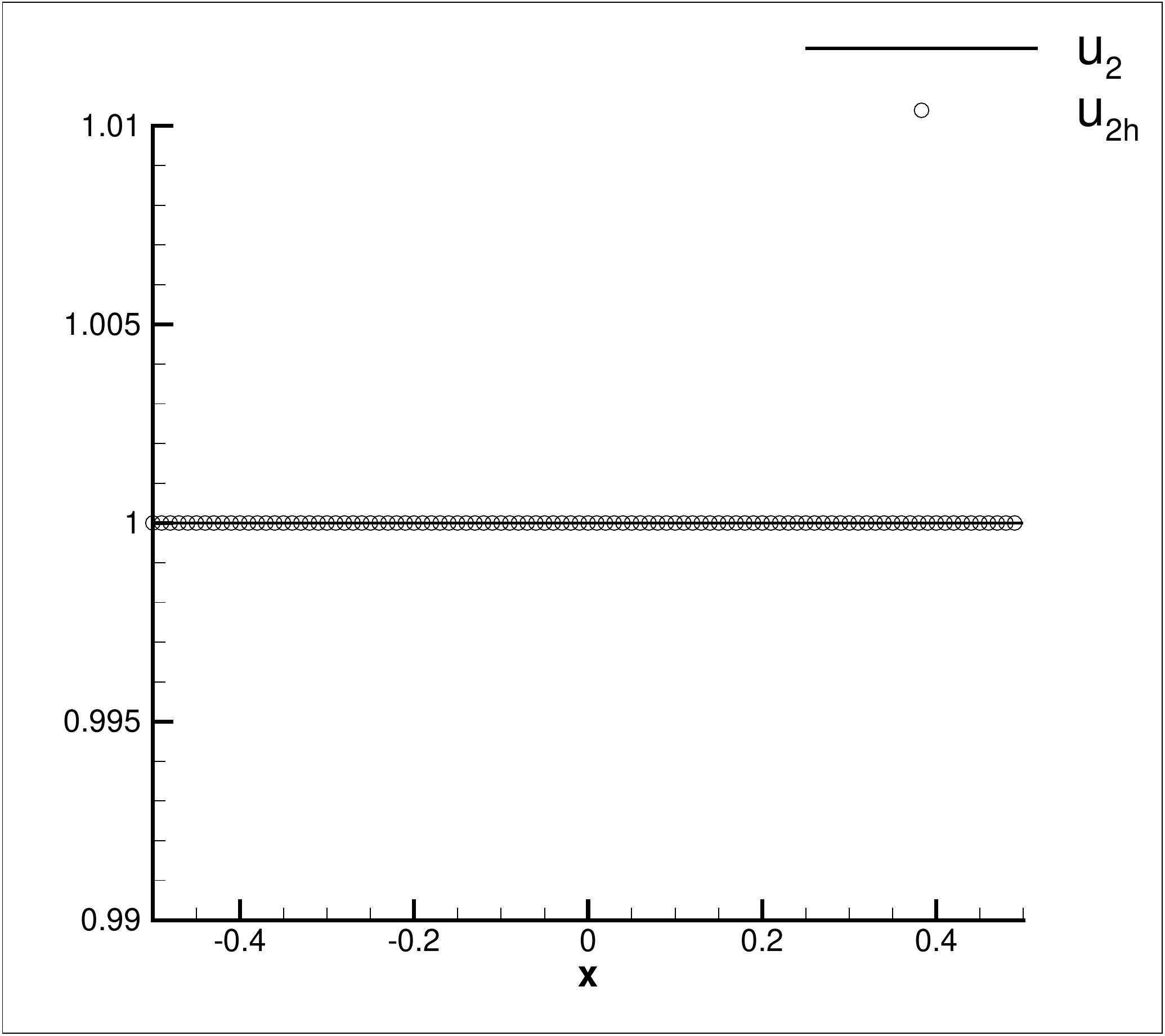}}
  \subfloat[$p_{2}$]{
  \includegraphics[height=.20\paperwidth,trim=0.2cm 0.2cm 0.2cm 0.2cm,clip=true]{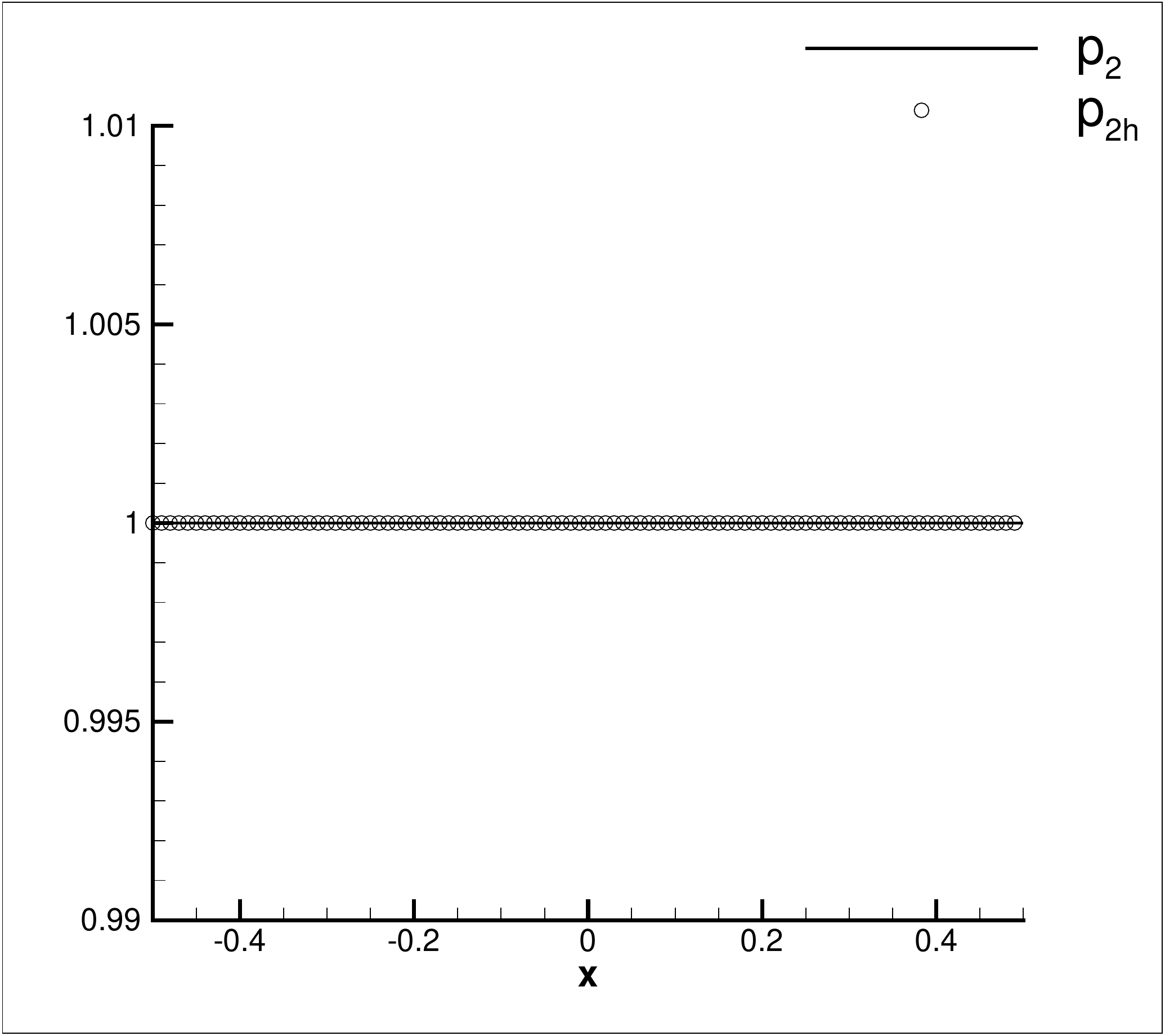}}
 \normalsize\caption{Comparison of the fourth order accurate numerical solution to the exact solution for test case RP1.}
 \label{result: abgrall criterion}
\end{figure}

\begin{figure}[H]
 \center
 \subfloat[$\alpha_{1}$]{
  \includegraphics[height=.20\paperwidth,trim=0.2cm 0.2cm 0.2cm 0.2cm,clip=true]{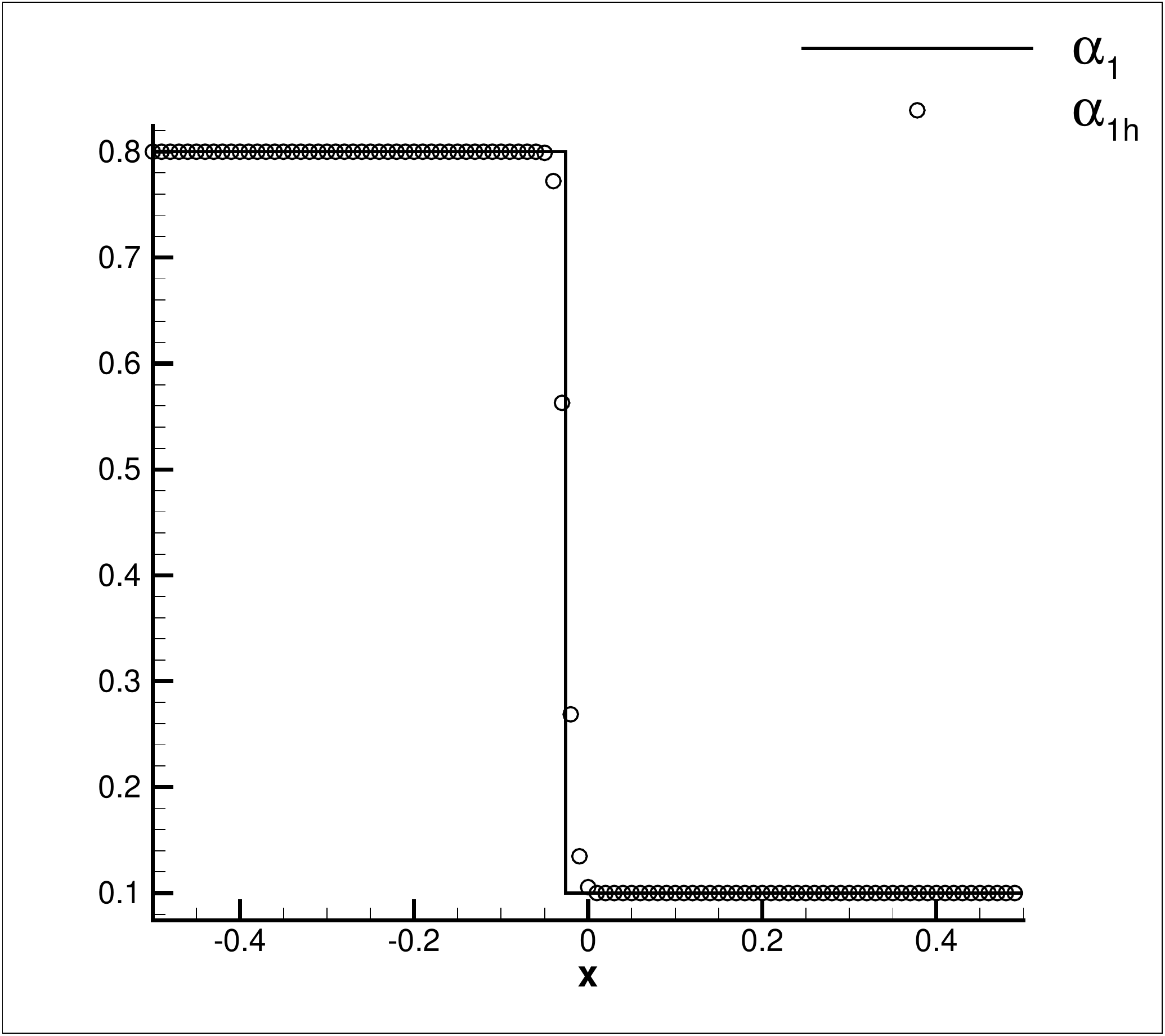}} \\
 \subfloat[$\rho_{1}$]{
  \includegraphics[height=.20\paperwidth,trim=0.2cm 0.2cm 0.2cm 0.2cm,clip=true]{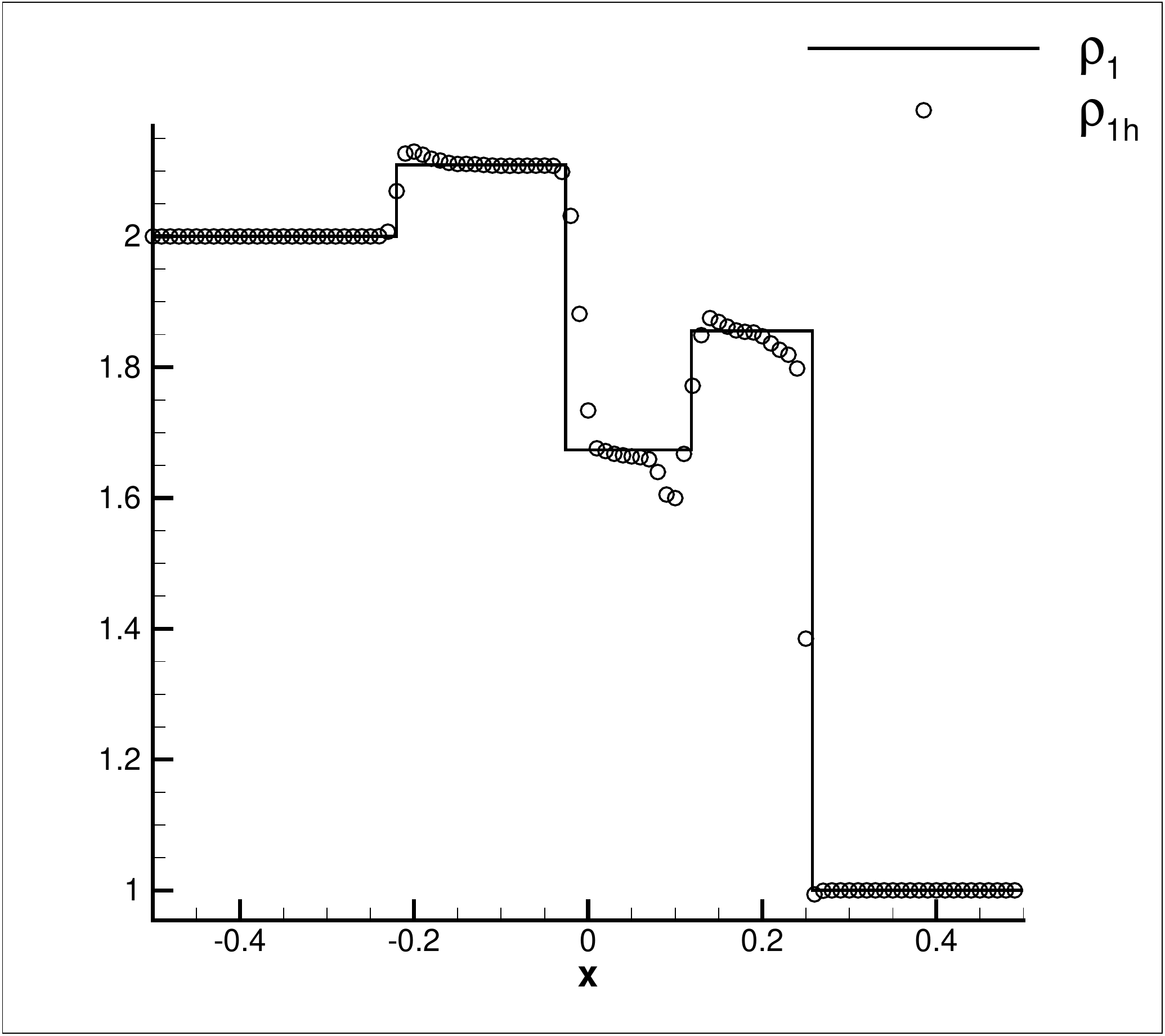}}
  \subfloat[$u_{1}$]{
  \includegraphics[height=.20\paperwidth,trim=0.2cm 0.2cm 0.2cm 0.2cm,clip=true]{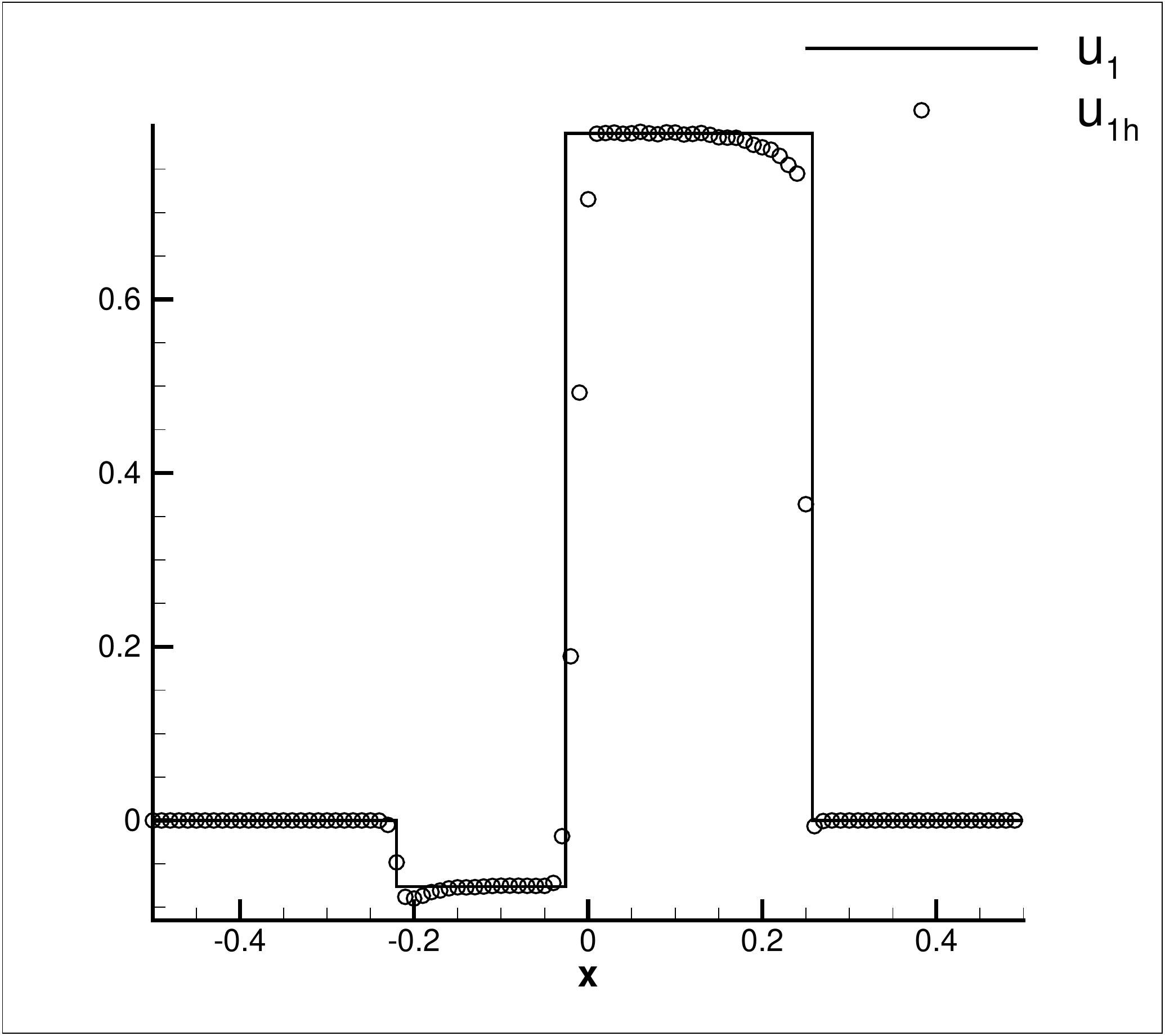}}
  \subfloat[$p_{1}$]{
  \includegraphics[height=.20\paperwidth,trim=0.2cm 0.2cm 0.2cm 0.2cm,clip=true]{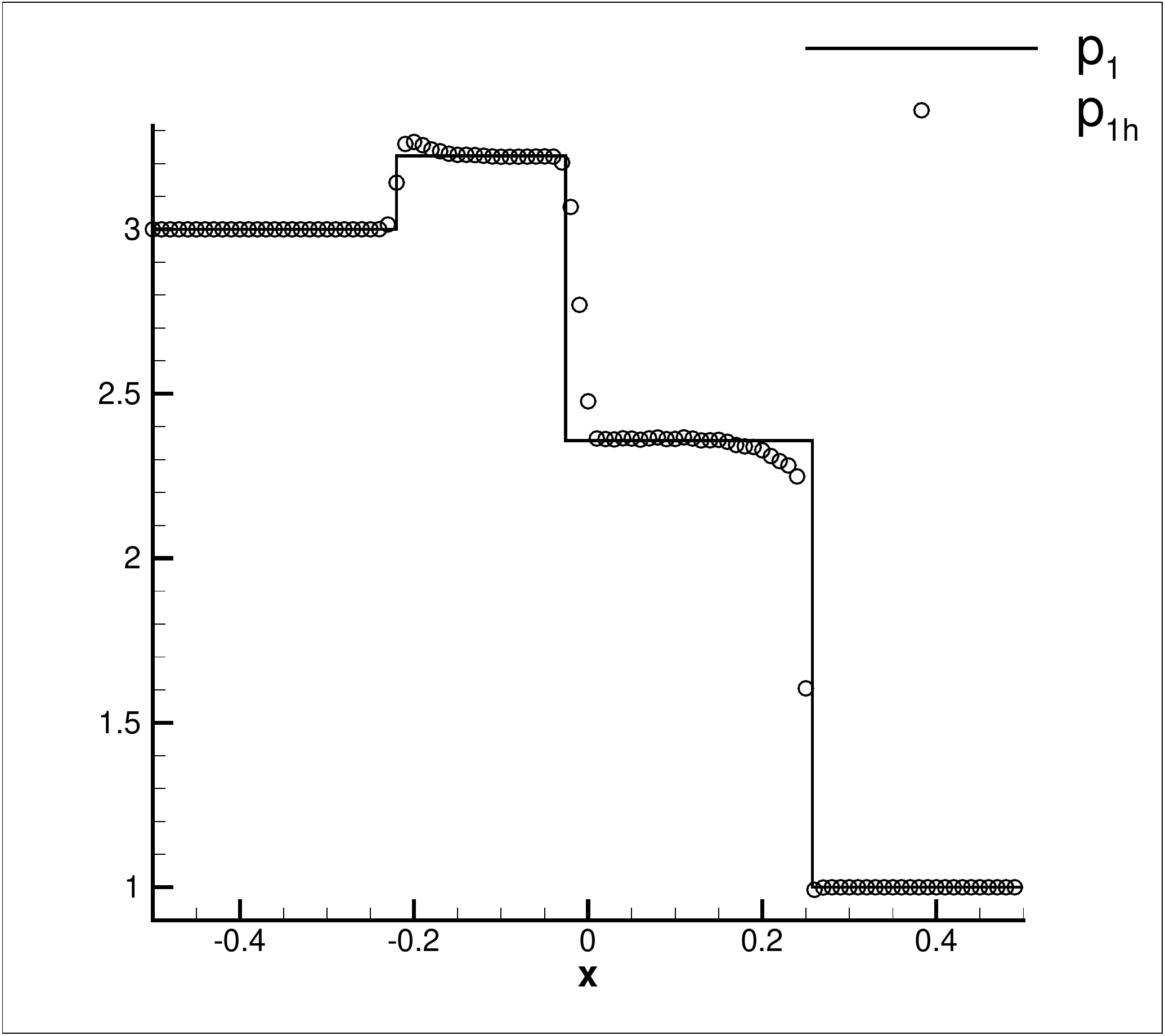}} \\
 \subfloat[$\rho_{2}$]{
  \includegraphics[height=.20\paperwidth,trim=0.2cm 0.2cm 0.2cm 0.2cm,clip=true]{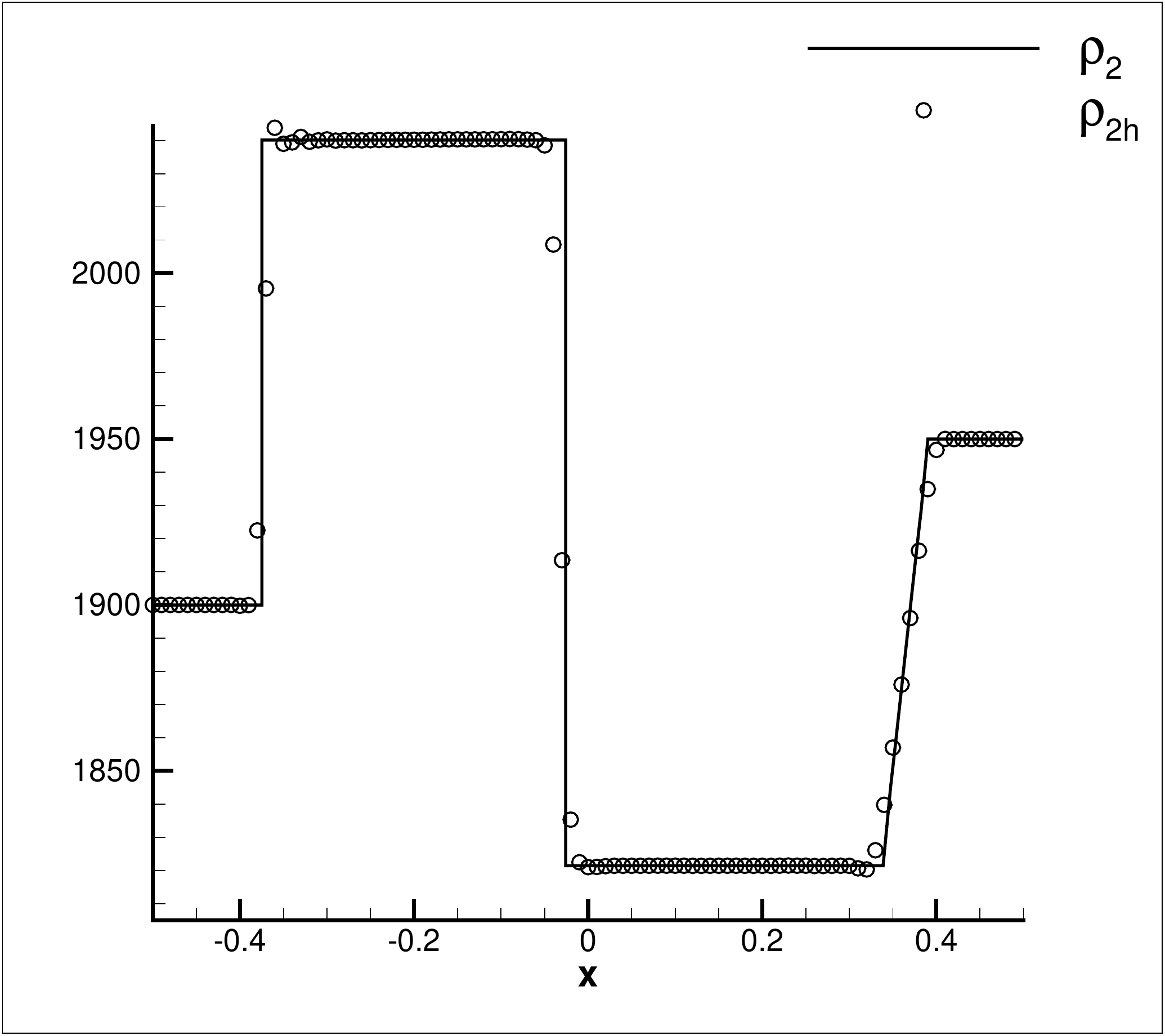}}
  \subfloat[$u_{2}$]{
  \includegraphics[height=.20\paperwidth,trim=0.2cm 0.2cm 0.2cm 0.2cm,clip=true]{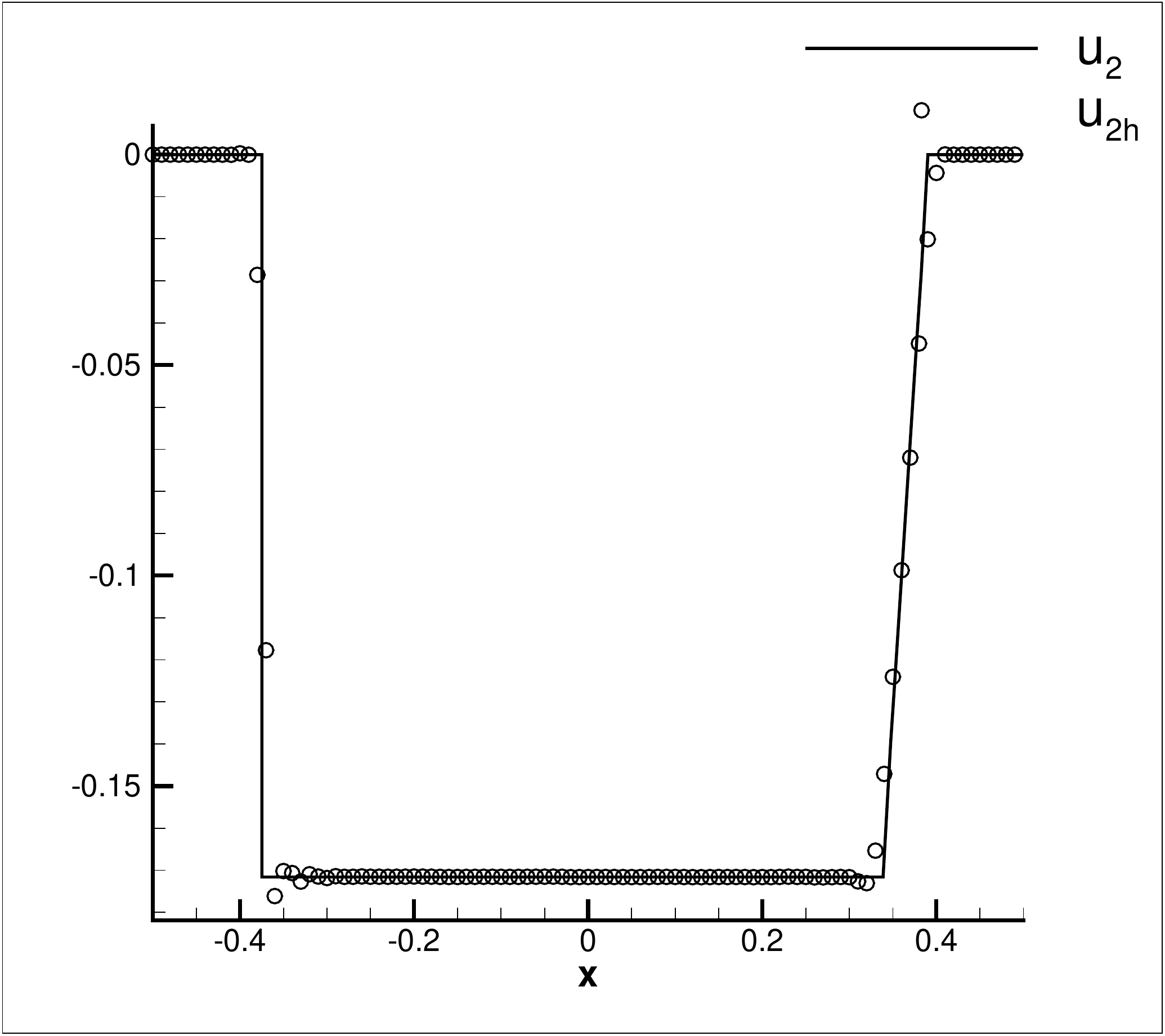}}
  \subfloat[$p_{2}$]{
  \includegraphics[height=.20\paperwidth,trim=0.2cm 0.2cm 0.2cm 0.2cm,clip=true]{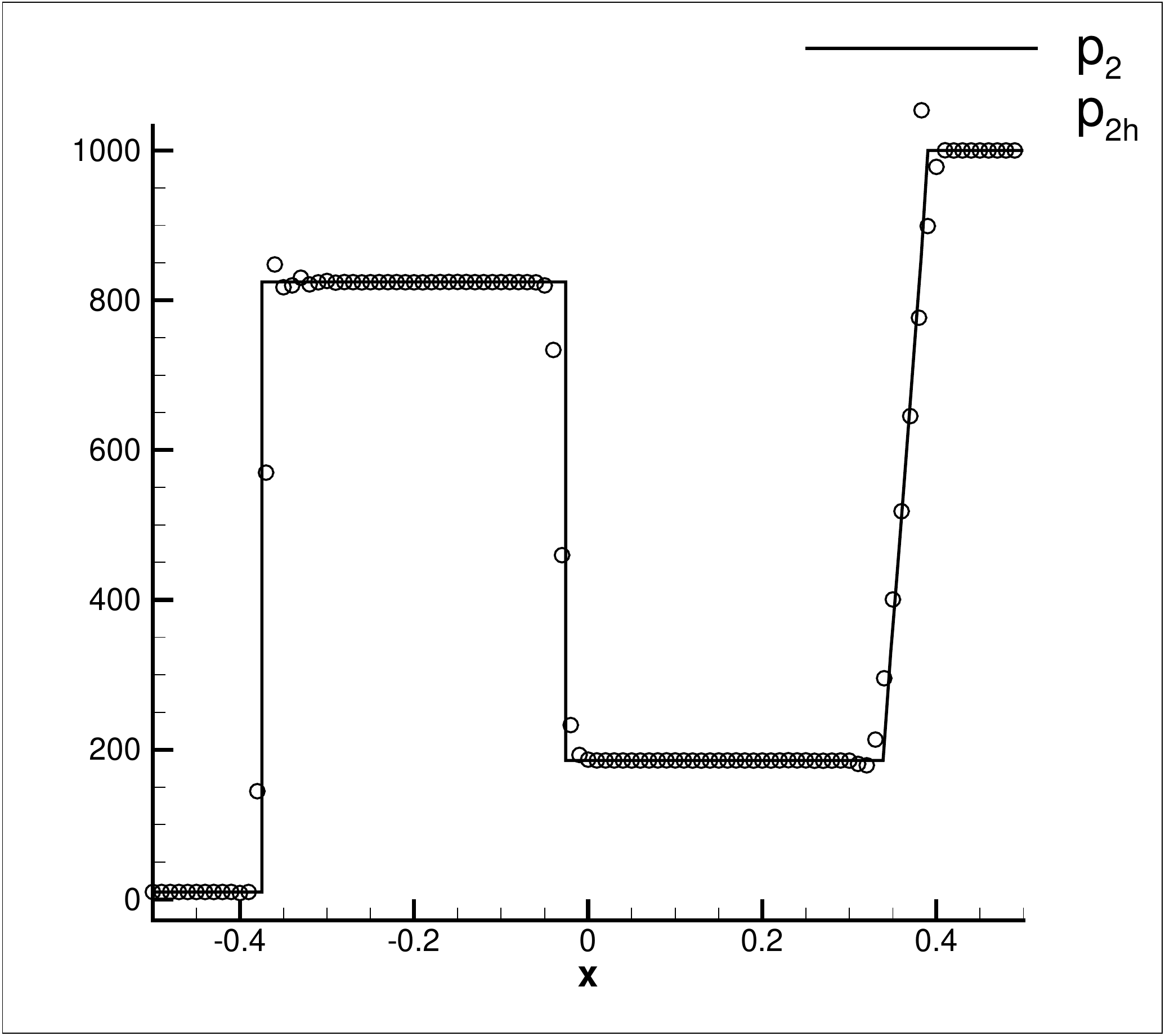}}
 \normalsize\caption{Comparison of the fourth order accurate numerical solution to the exact solution for test case RP2.}  
 \label{result: RP2}
\end{figure}

\begin{figure}[H]
 \center
 \subfloat[$\alpha_{1}$]{
  \includegraphics[height=.20\paperwidth,trim=0.2cm 0.2cm 0.2cm 0.2cm,clip=true]{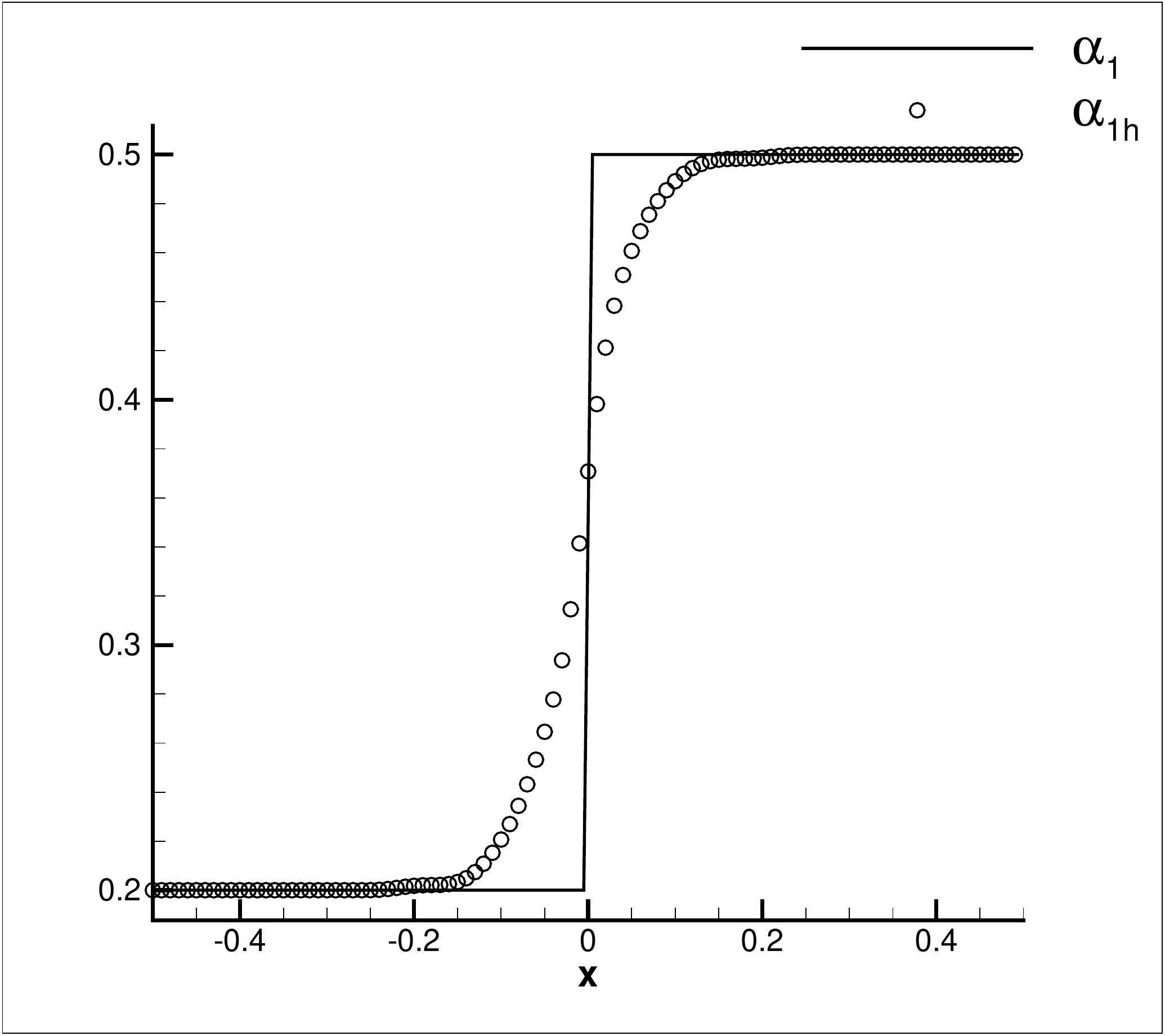}} \\
 \subfloat[$\rho_{1}$]{
  \includegraphics[height=.20\paperwidth,trim=0.2cm 0.2cm 0.2cm 0.2cm,clip=true]{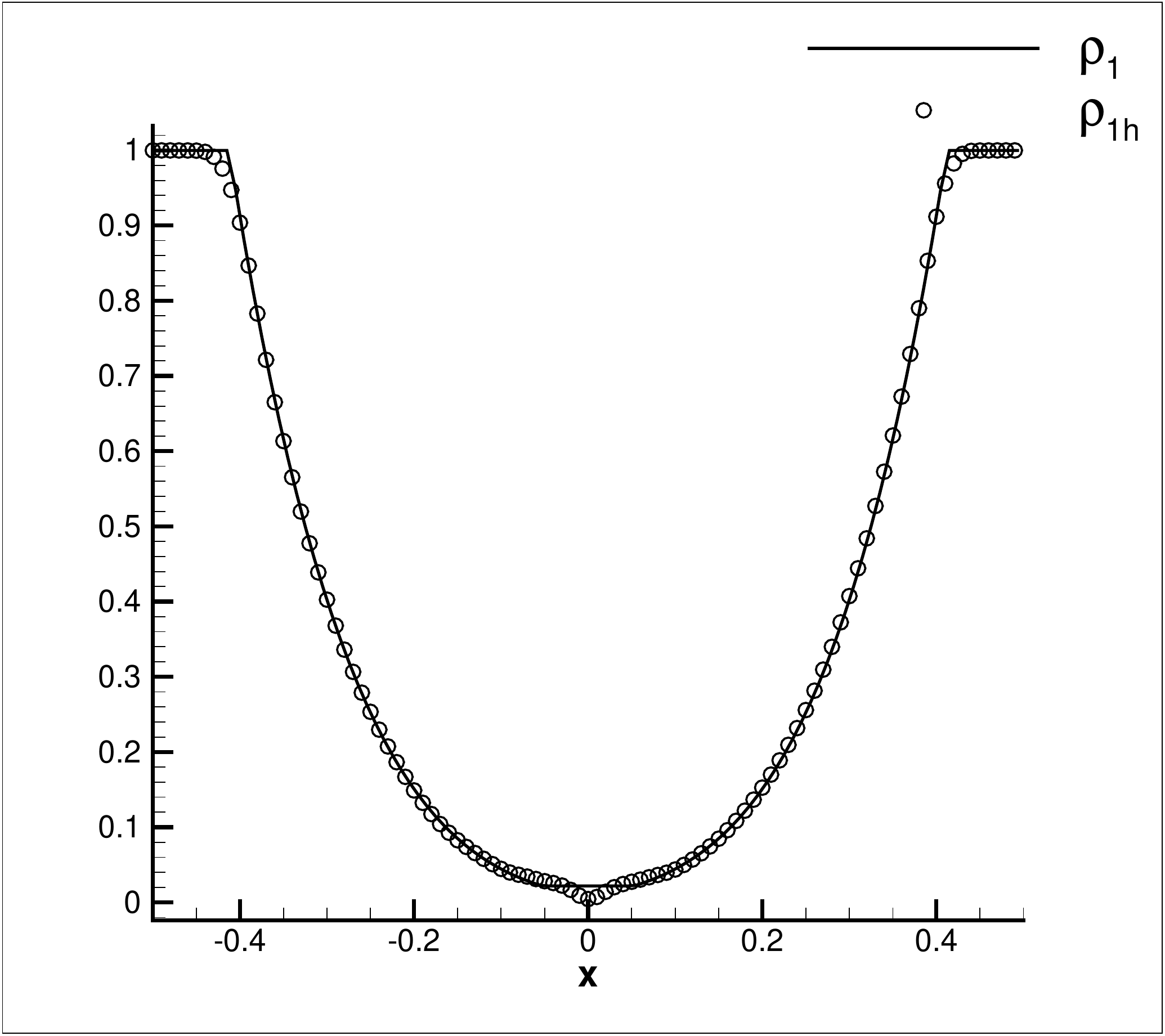}}
  \subfloat[$u_{1}$]{
  \includegraphics[height=.20\paperwidth,trim=0.2cm 0.2cm 0.2cm 0.2cm,clip=true]{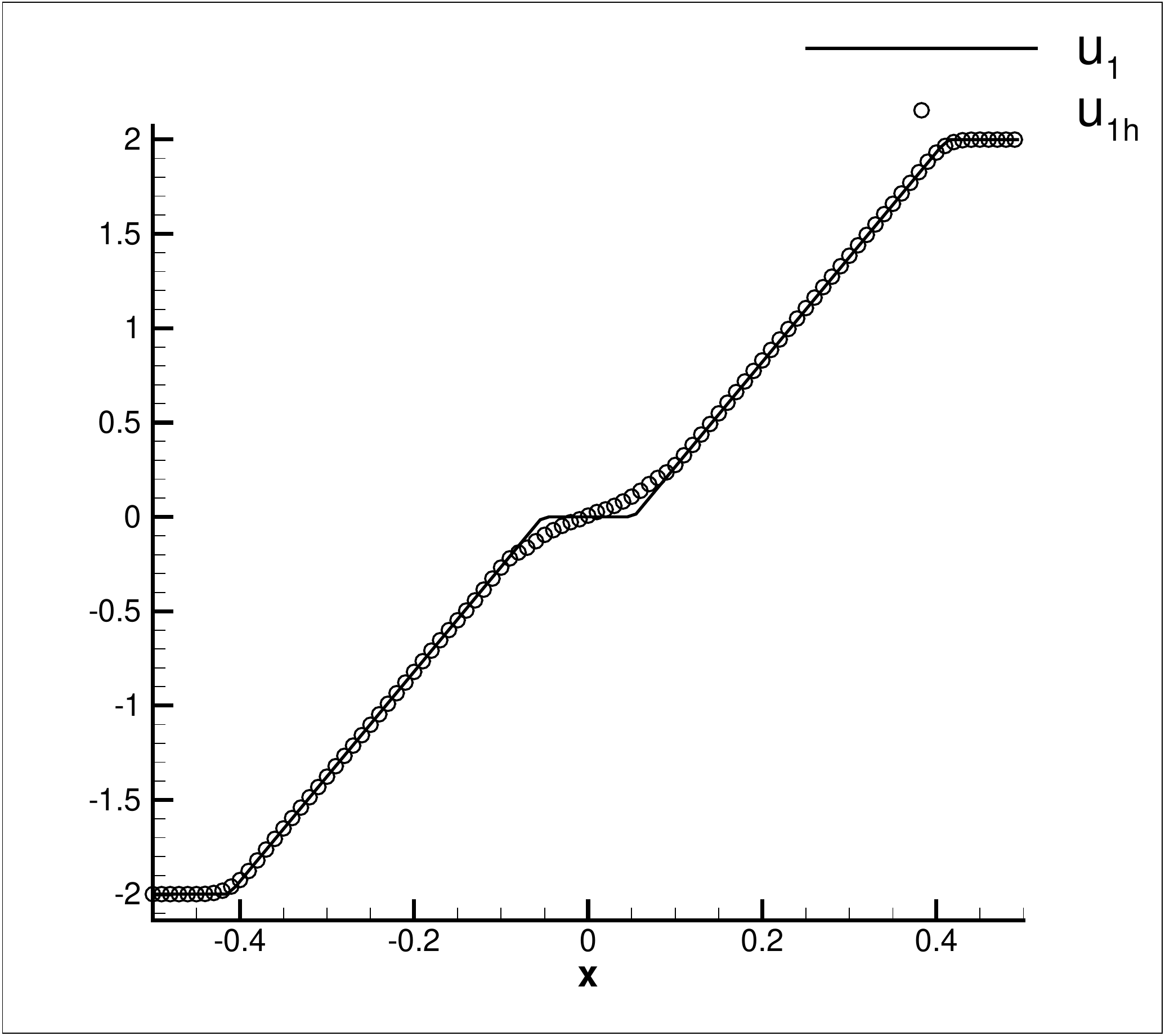}}
  \subfloat[$p_{1}$]{
  \includegraphics[height=.20\paperwidth,trim=0.2cm 0.2cm 0.2cm 0.2cm,clip=true]{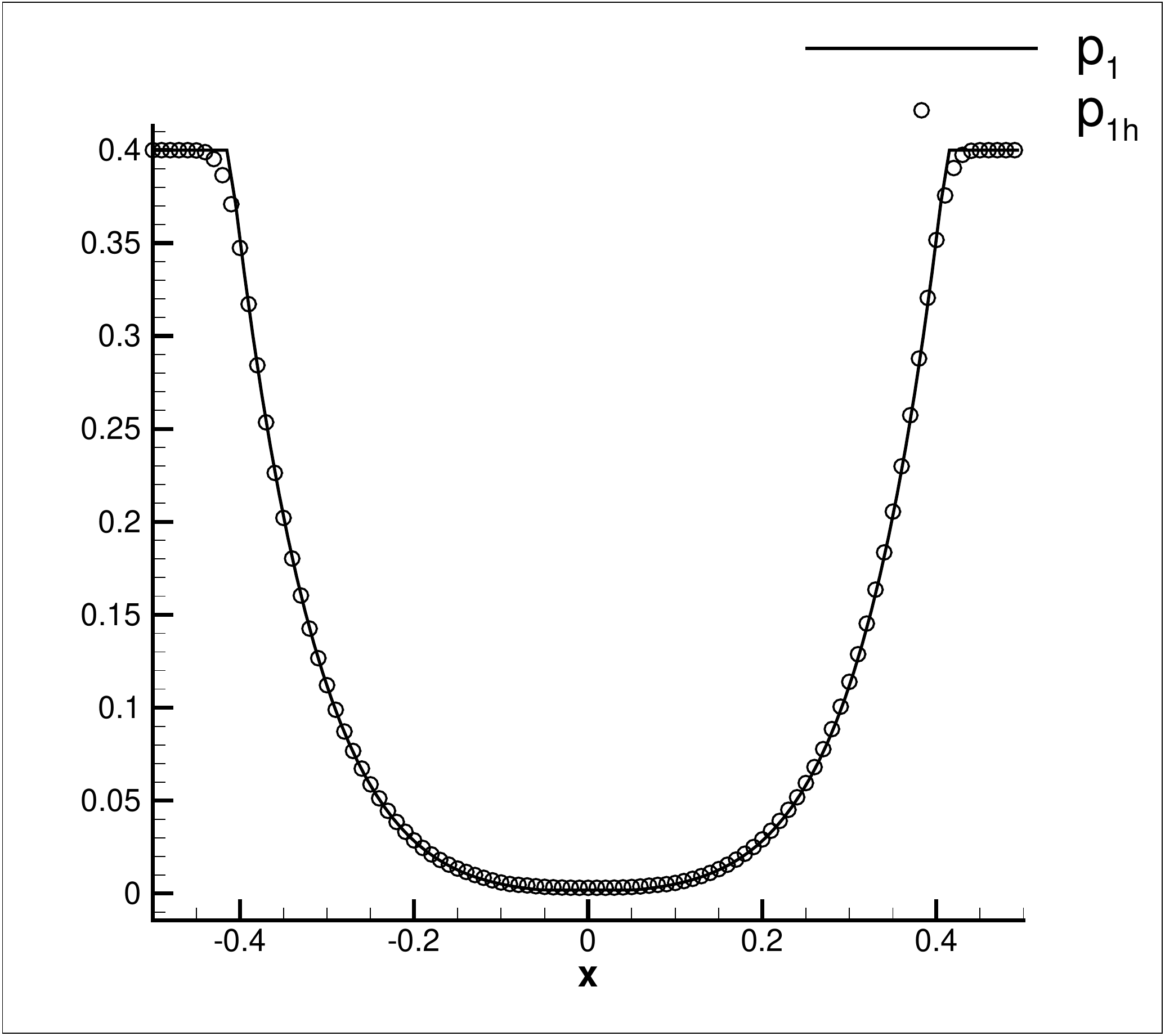}} \\
 \subfloat[$\rho_{2}$]{
  \includegraphics[height=.20\paperwidth,trim=0.2cm 0.2cm 0.2cm 0.2cm,clip=true]{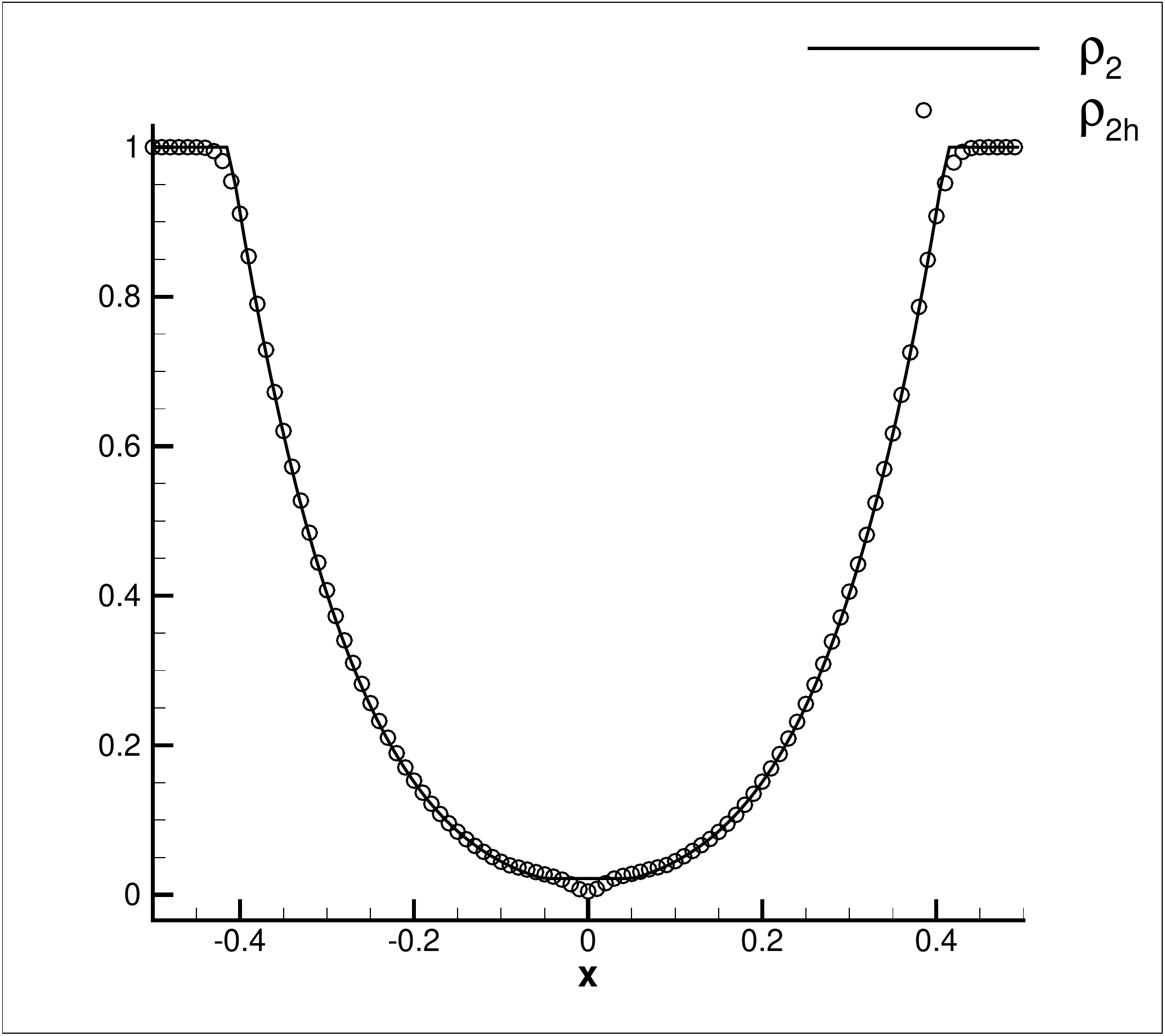}}
  \subfloat[$u_{2}$]{
  \includegraphics[height=.20\paperwidth,trim=0.2cm 0.2cm 0.2cm 0.2cm,clip=true]{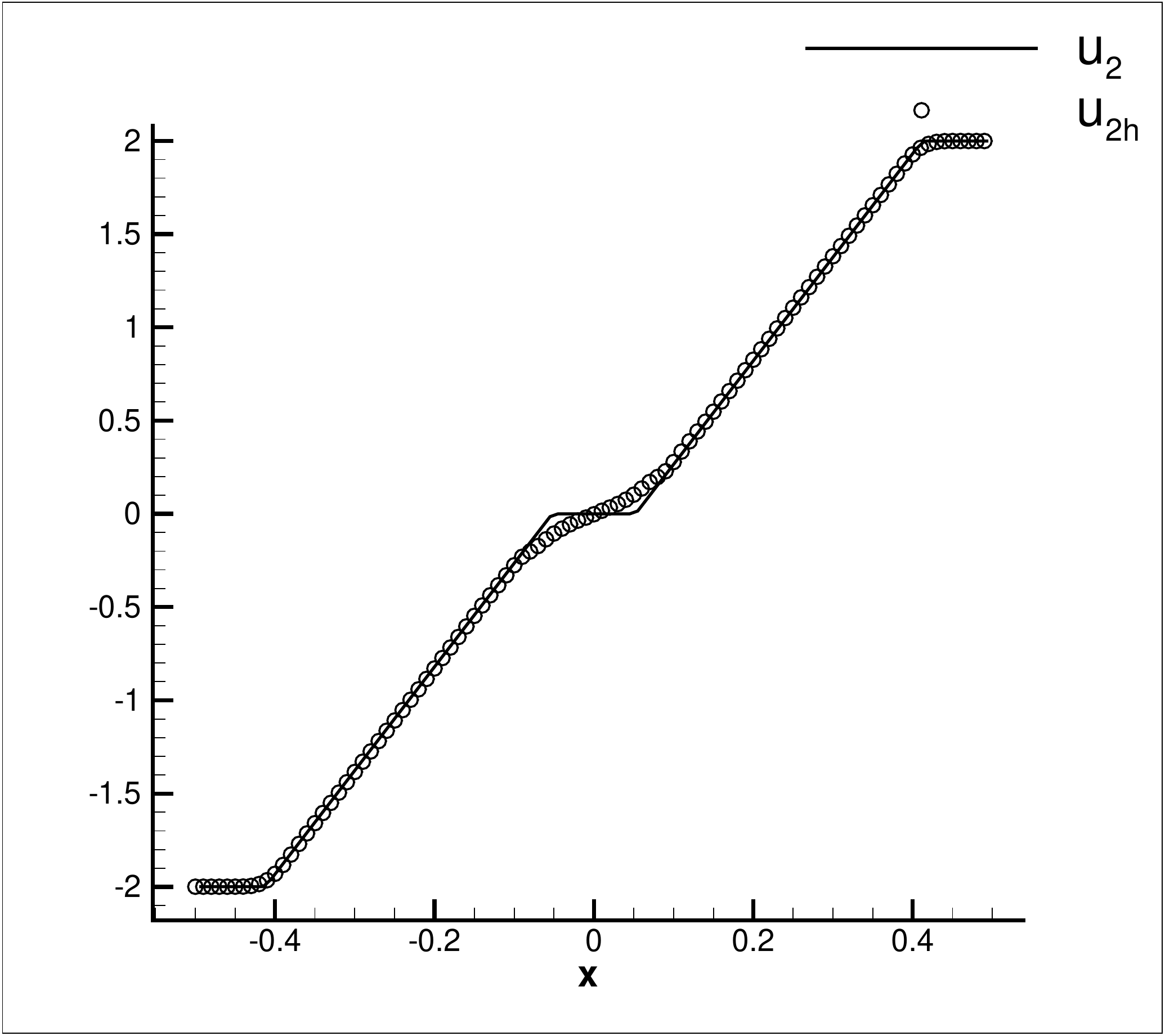}}
  \subfloat[$p_{2}$]{
  \includegraphics[height=.20\paperwidth,trim=0.2cm 0.2cm 0.2cm 0.2cm,clip=true]{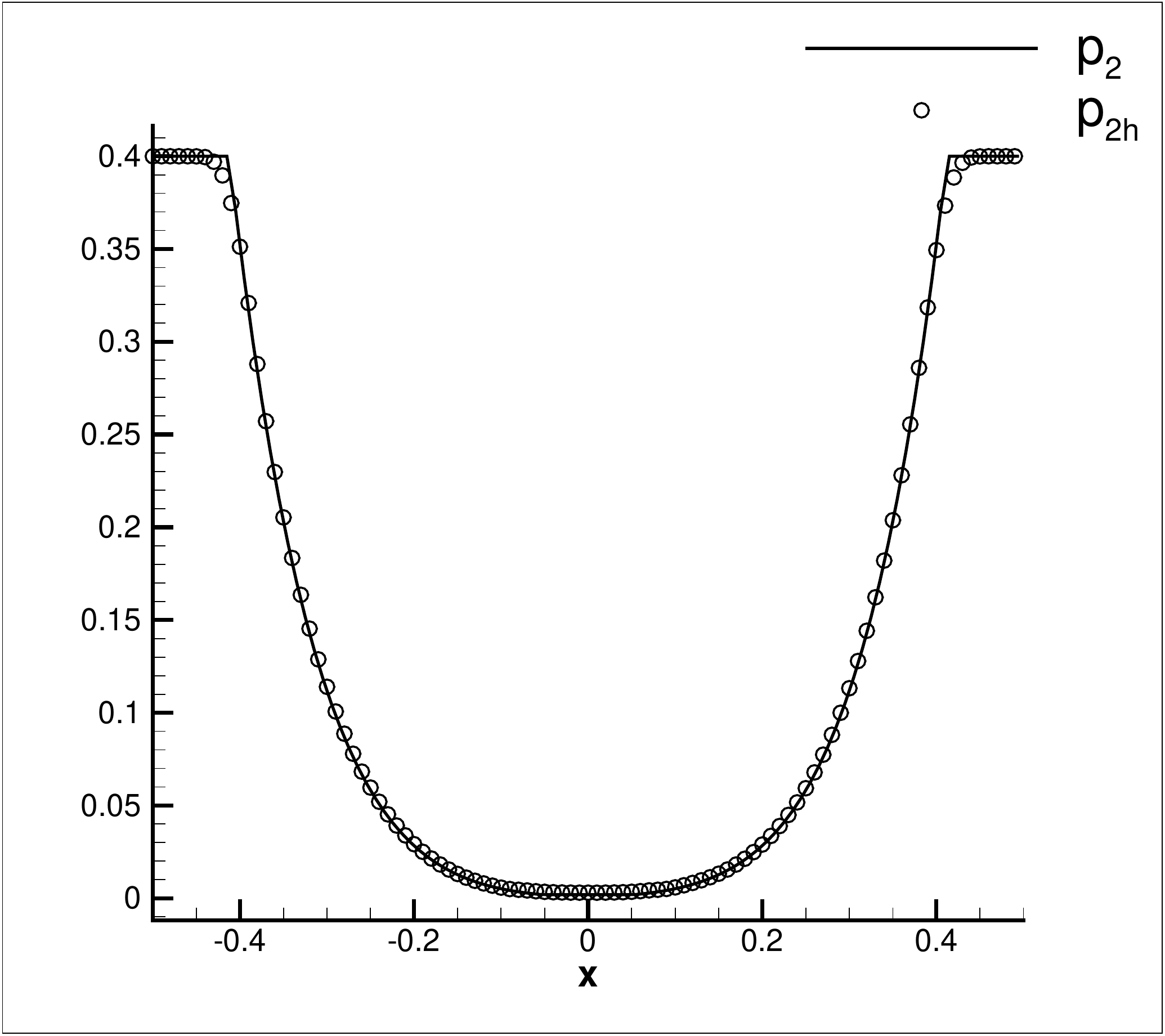}}
 \normalsize\caption{Comparison of the fourth order accurate numerical solution to the exact solution for test case RP3.}  
 \label{result: RP3}
\end{figure}

\begin{figure}[H]
 \center
 \subfloat[$\alpha_{1}$]{
  \includegraphics[height=.20\paperwidth,trim=0.2cm 0.2cm 0.2cm 0.2cm,clip=true]{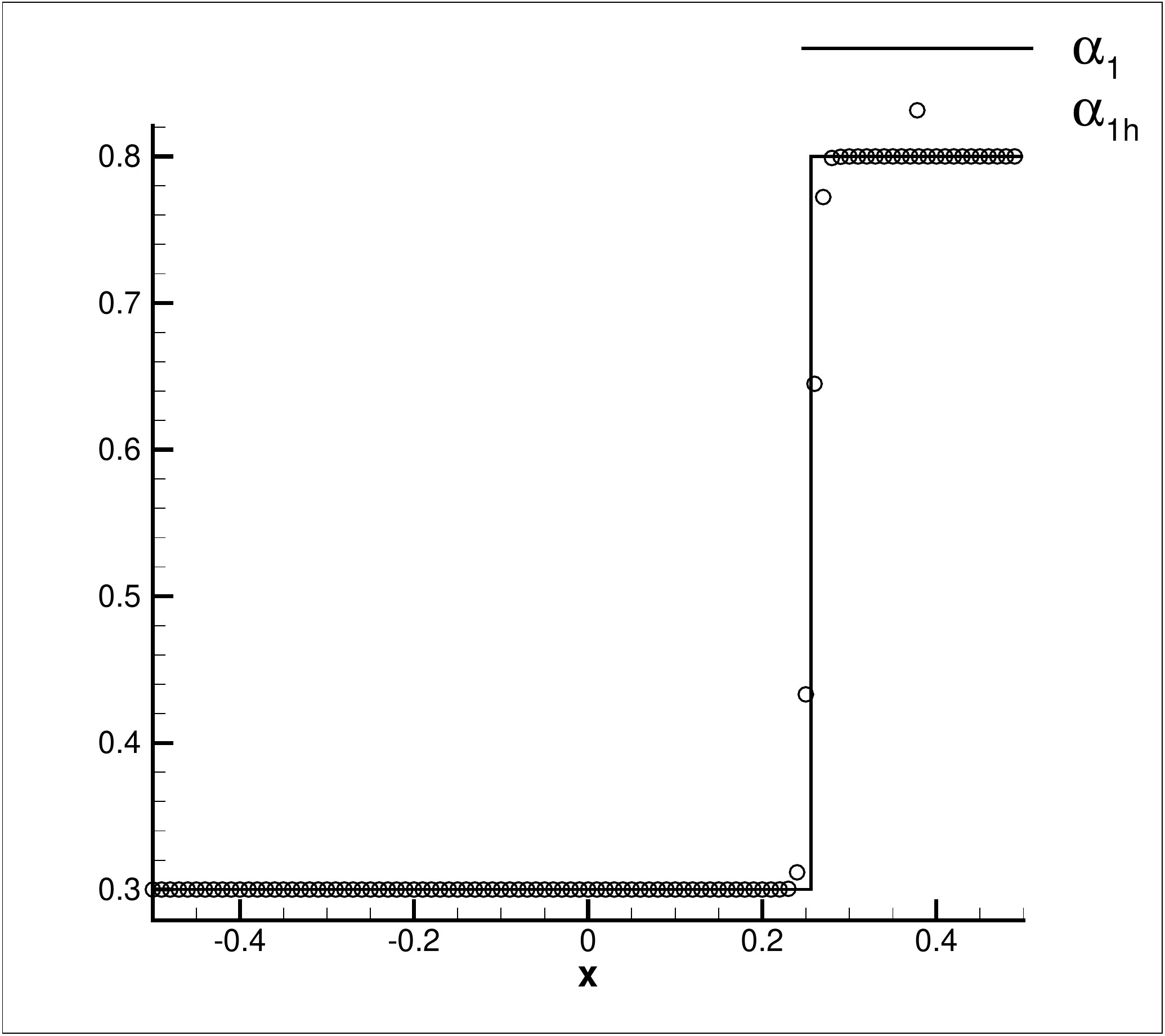}} \\
 \subfloat[$\rho_{1}$]{
  \includegraphics[height=.20\paperwidth,trim=0.2cm 0.2cm 0.2cm 0.2cm,clip=true]{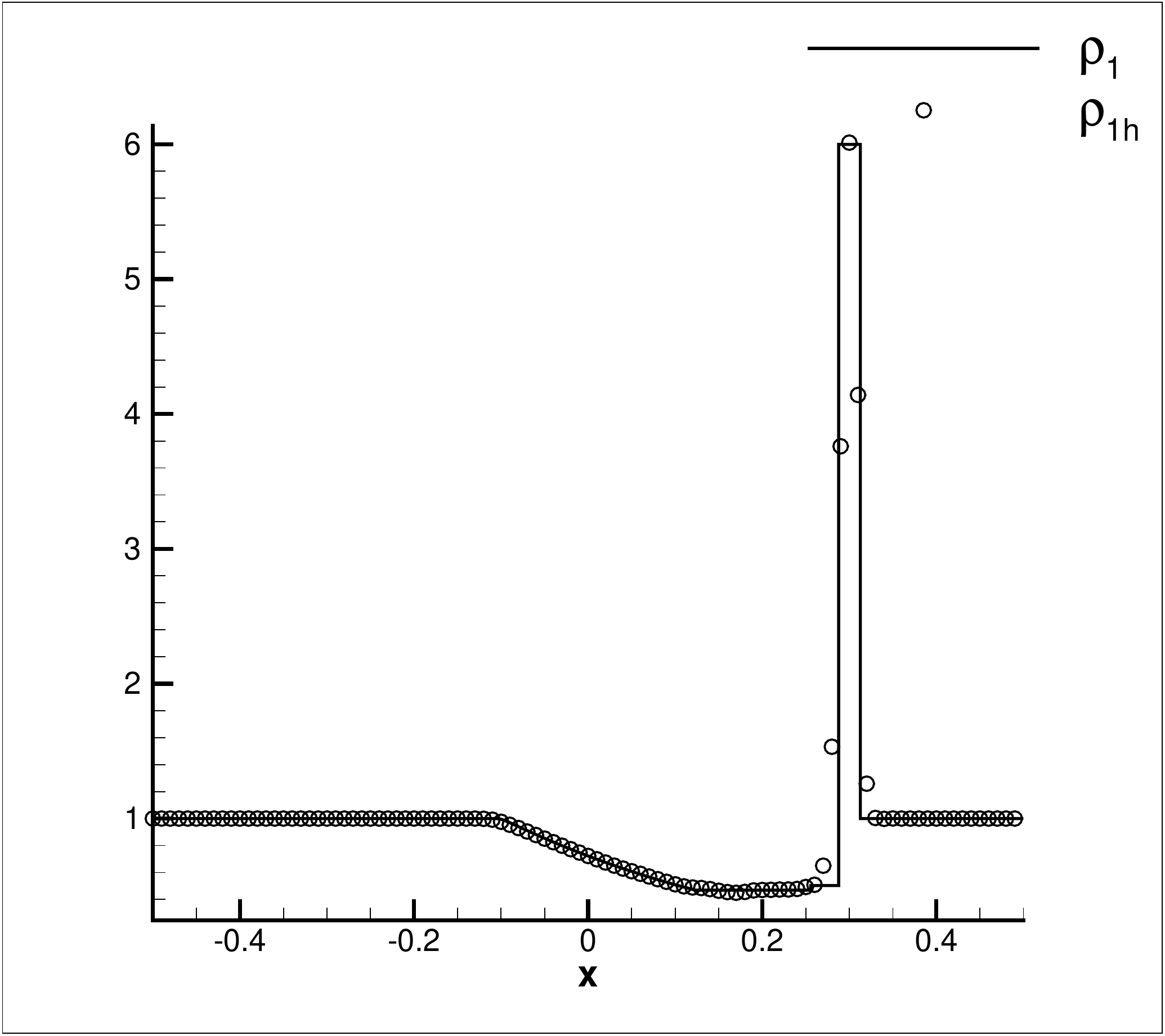}}
  \subfloat[$u_{1}$]{
  \includegraphics[height=.20\paperwidth,trim=0.2cm 0.2cm 0.2cm 0.2cm,clip=true]{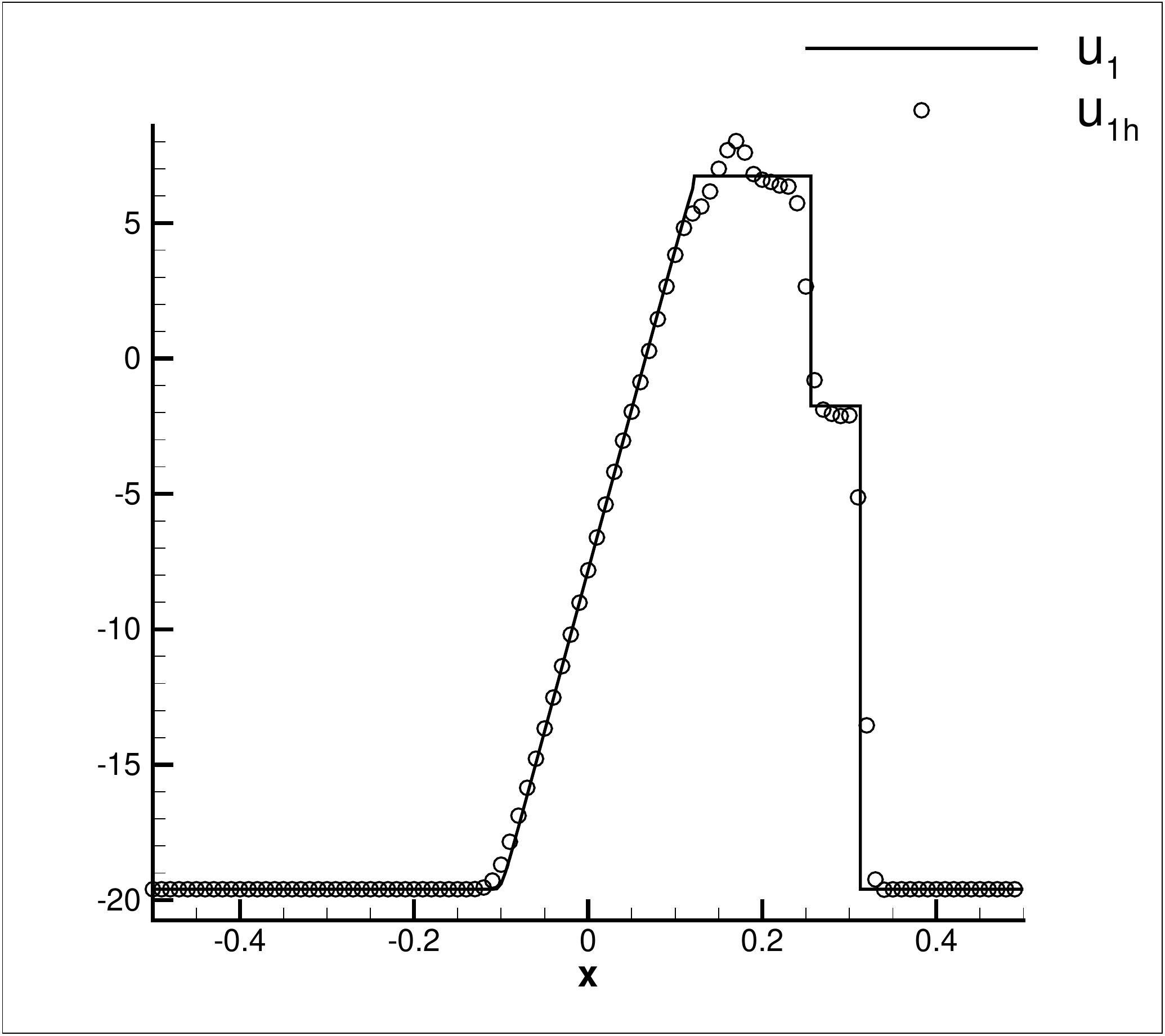}}
  \subfloat[$p_{1}$]{
  \includegraphics[height=.20\paperwidth,trim=0.2cm 0.2cm 0.2cm 0.2cm,clip=true]{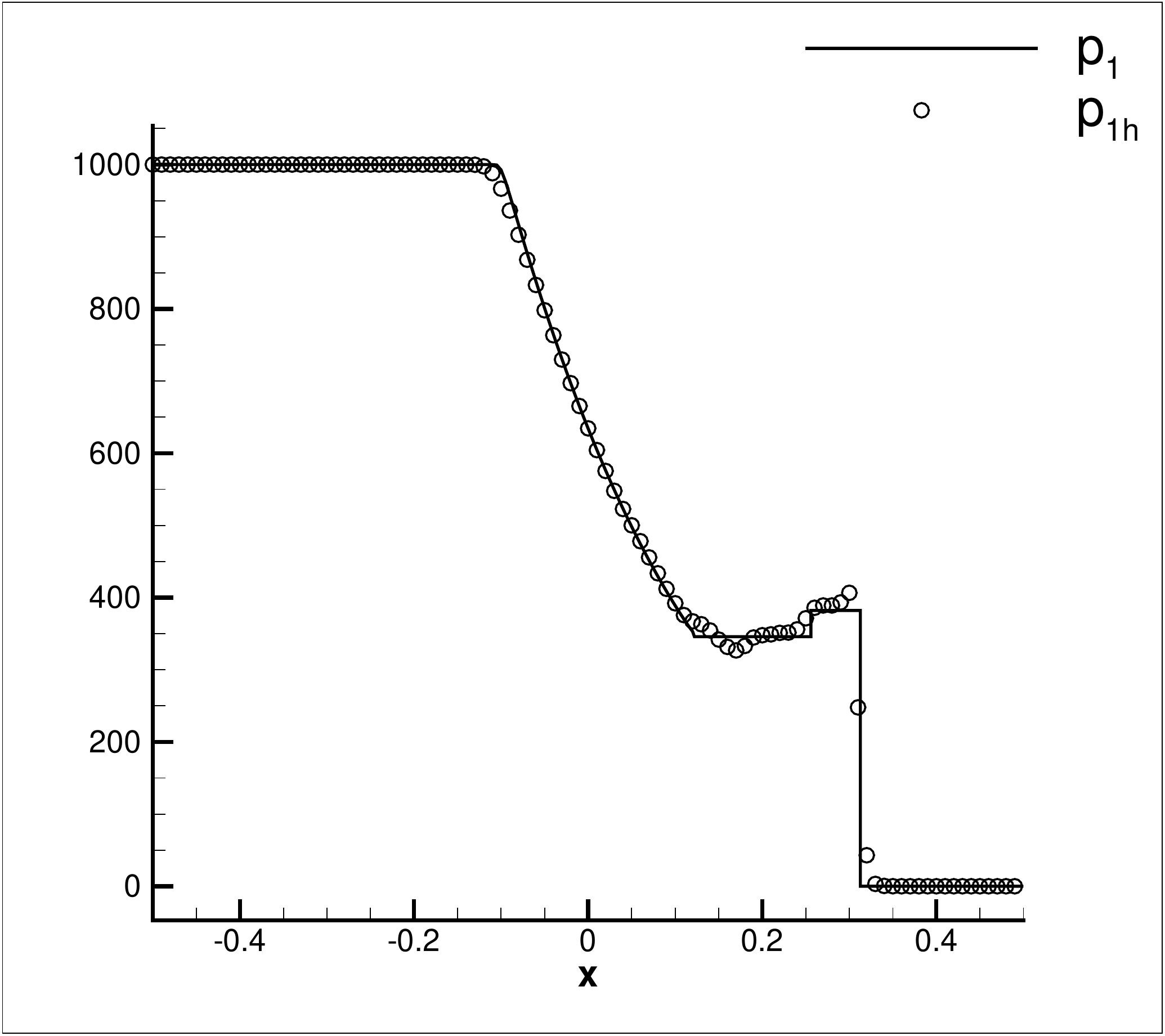}} \\
 \subfloat[$\rho_{2}$]{
  \includegraphics[height=.20\paperwidth,trim=0.2cm 0.2cm 0.2cm 0.2cm,clip=true]{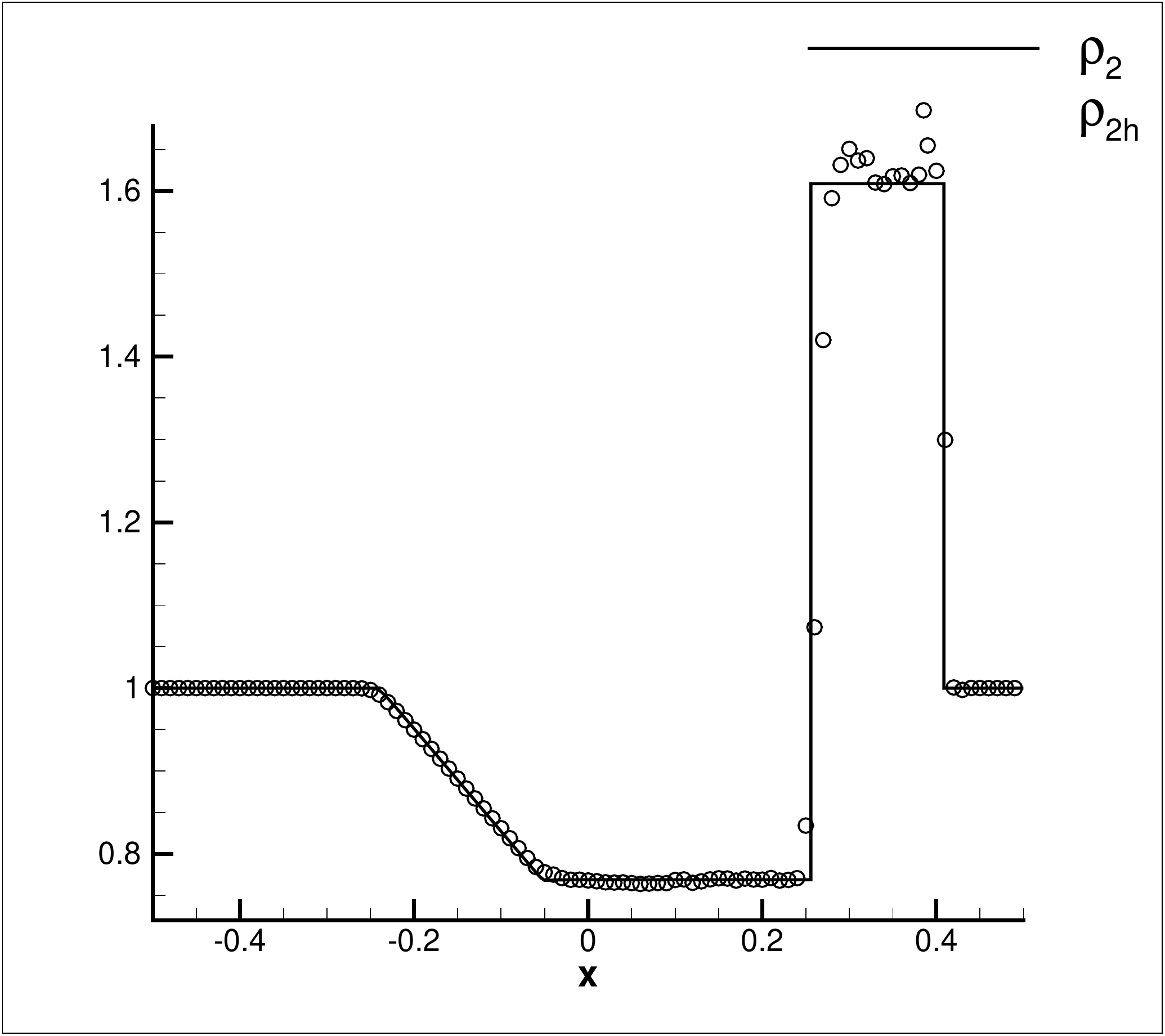}}
  \subfloat[$u_{2}$]{
  \includegraphics[height=.20\paperwidth,trim=0.2cm 0.2cm 0.2cm 0.2cm,clip=true]{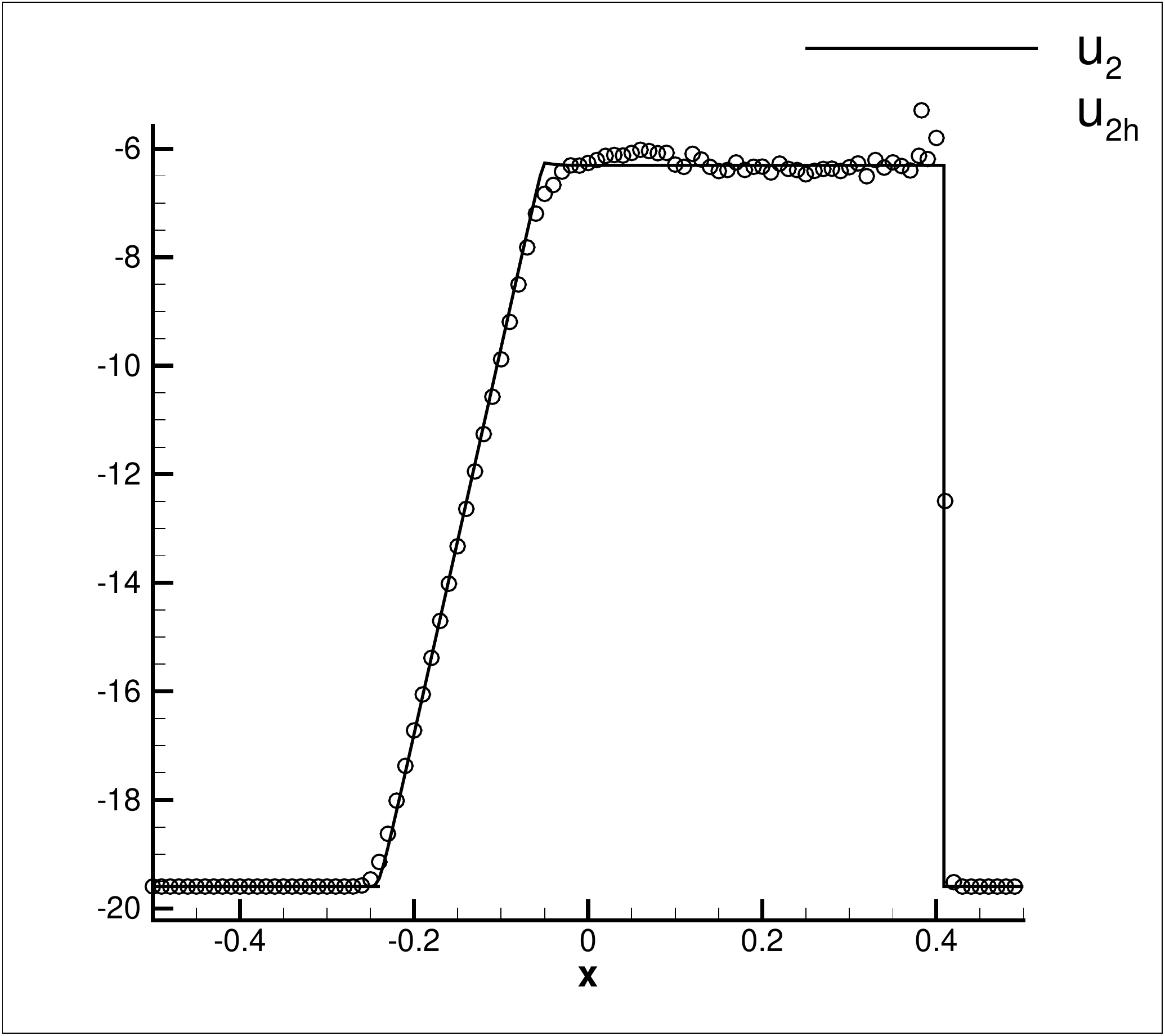}}
  \subfloat[$p_{2}$]{
  \includegraphics[height=.20\paperwidth,trim=0.2cm 0.2cm 0.2cm 0.2cm,clip=true]{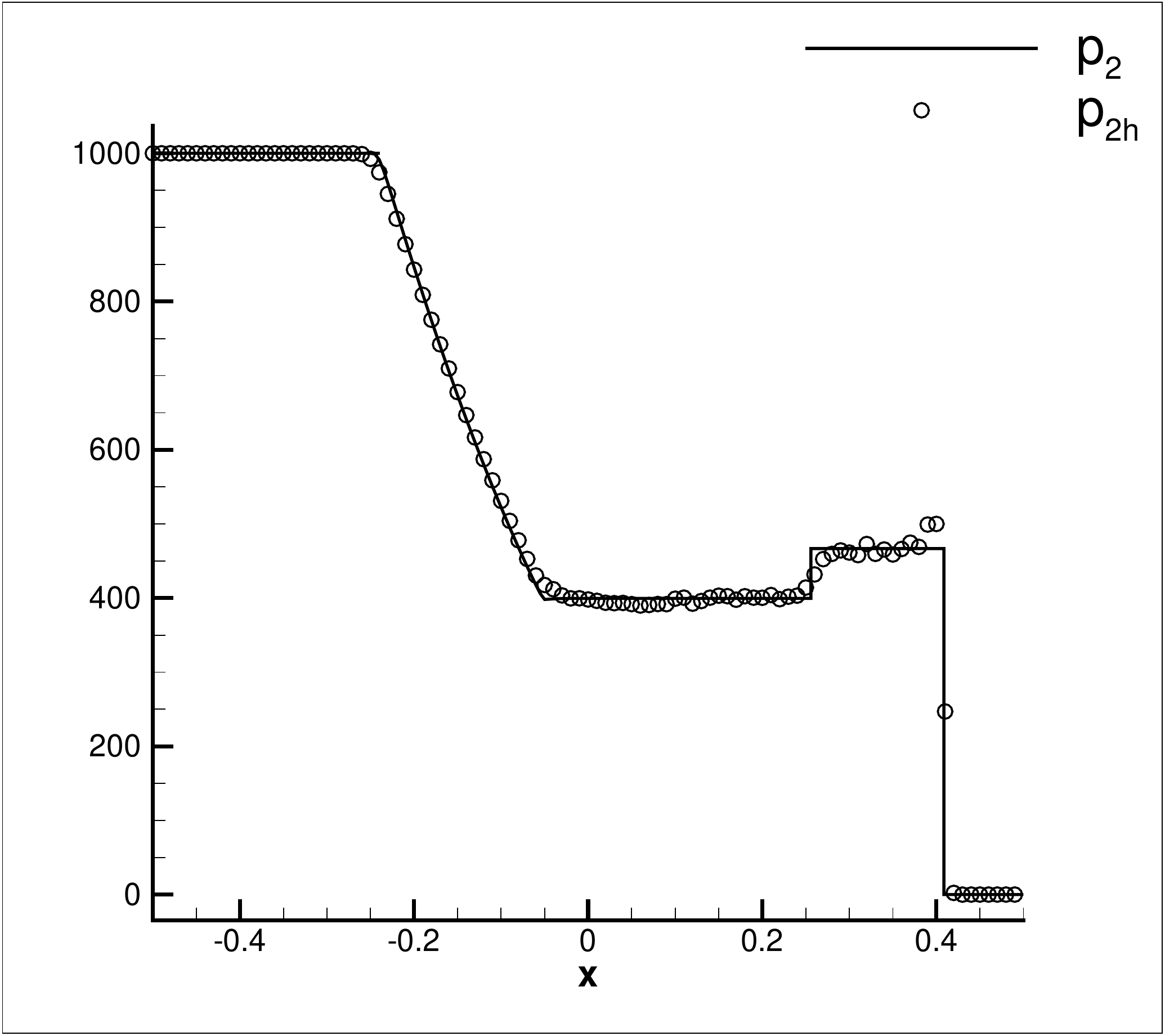}}
 \normalsize\caption{Comparison of the fourth order accurate numerical solution to the exact solution for test case RP4.}  
 \label{result: RP4}
\end{figure}

\begin{figure}[H]
 \center
 \subfloat[$\alpha_{1}$]{
  \includegraphics[height=.20\paperwidth,trim=0.2cm 0.2cm 0.2cm 0.2cm,clip=true]{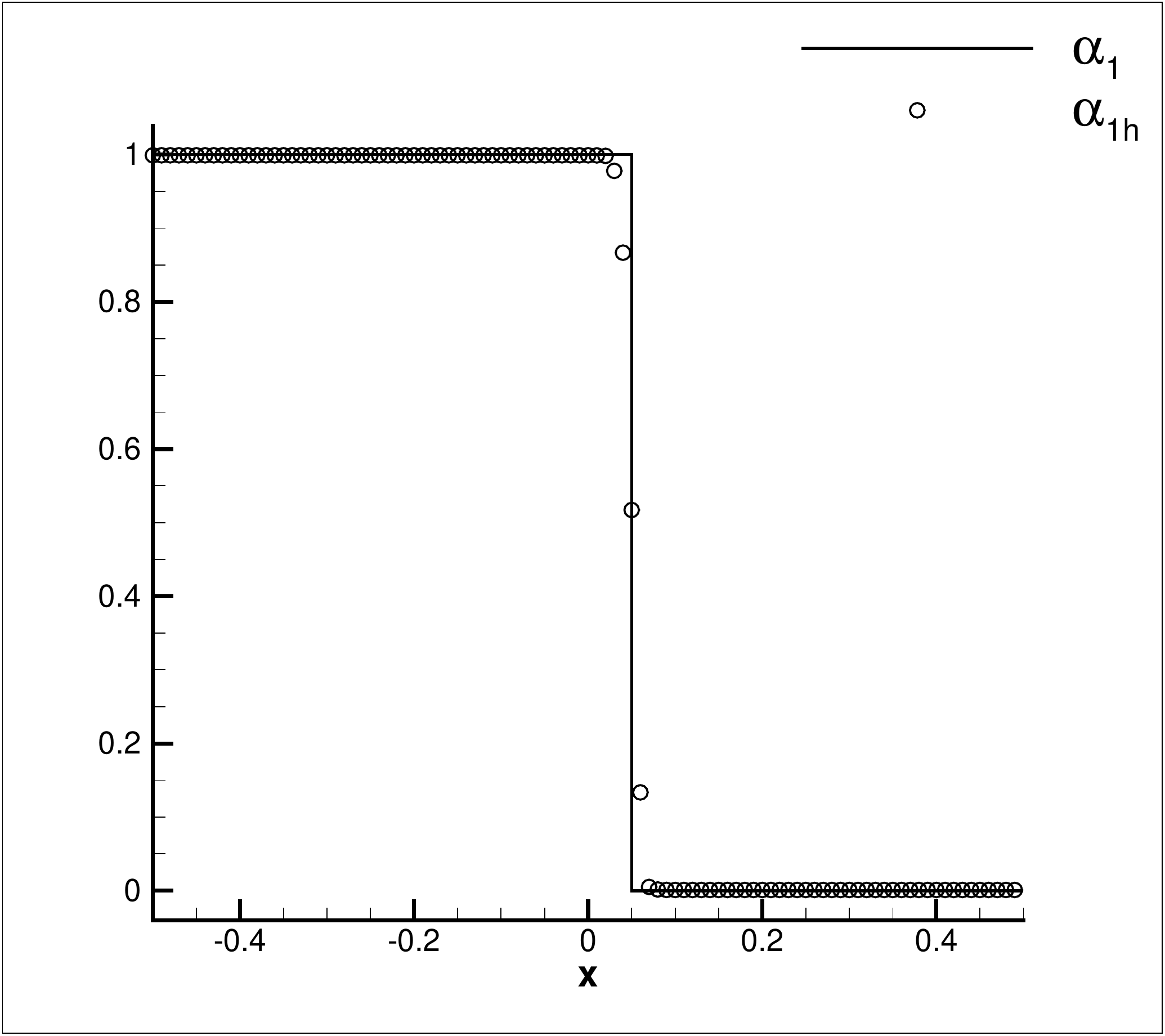}} \\
 \subfloat[$\rho_{1}$]{
  \includegraphics[height=.20\paperwidth,trim=0.2cm 0.2cm 0.2cm 0.2cm,clip=true]{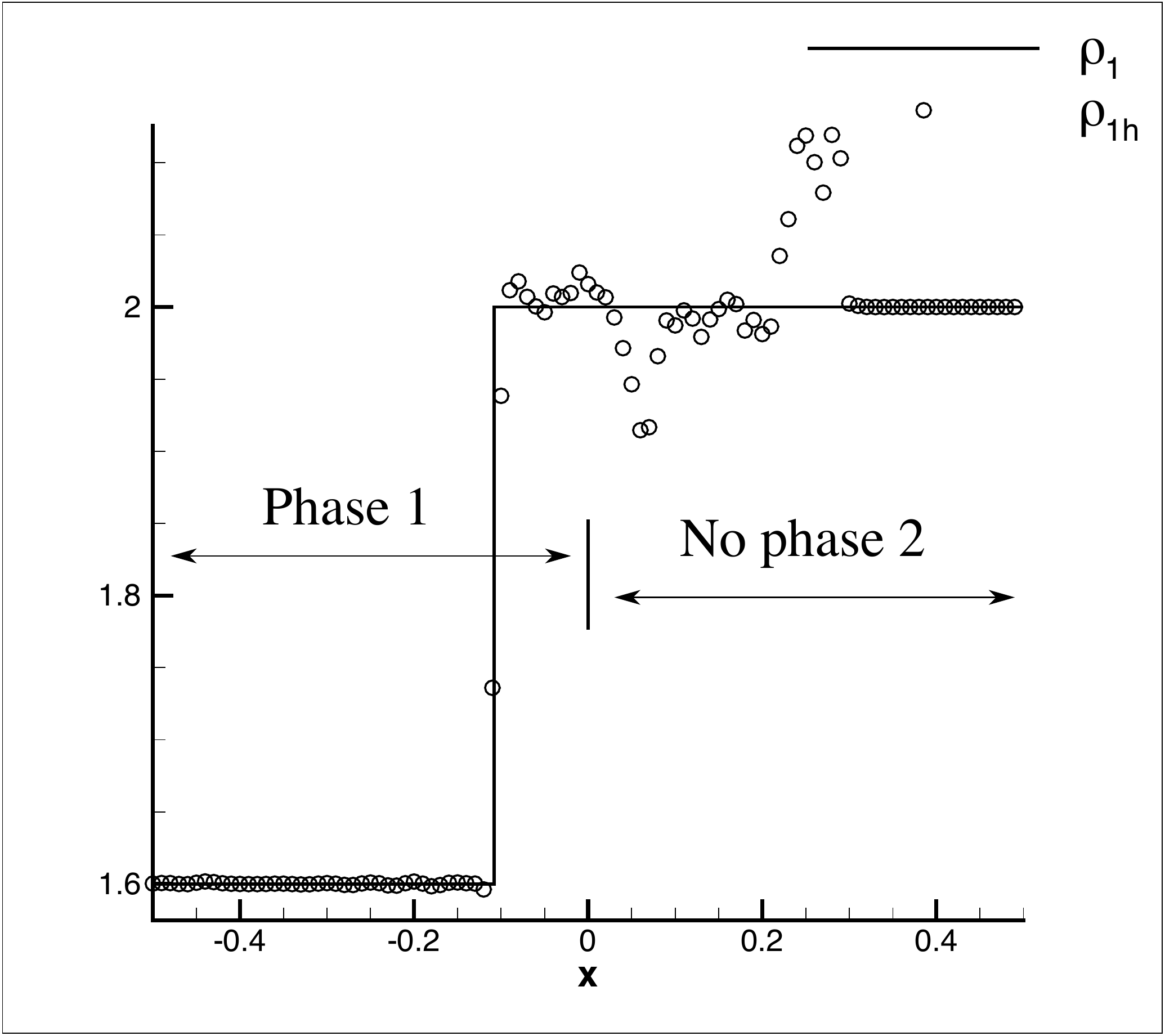}}
  \subfloat[$u_{1}$]{
  \includegraphics[height=.20\paperwidth,trim=0.2cm 0.2cm 0.2cm 0.2cm,clip=true]{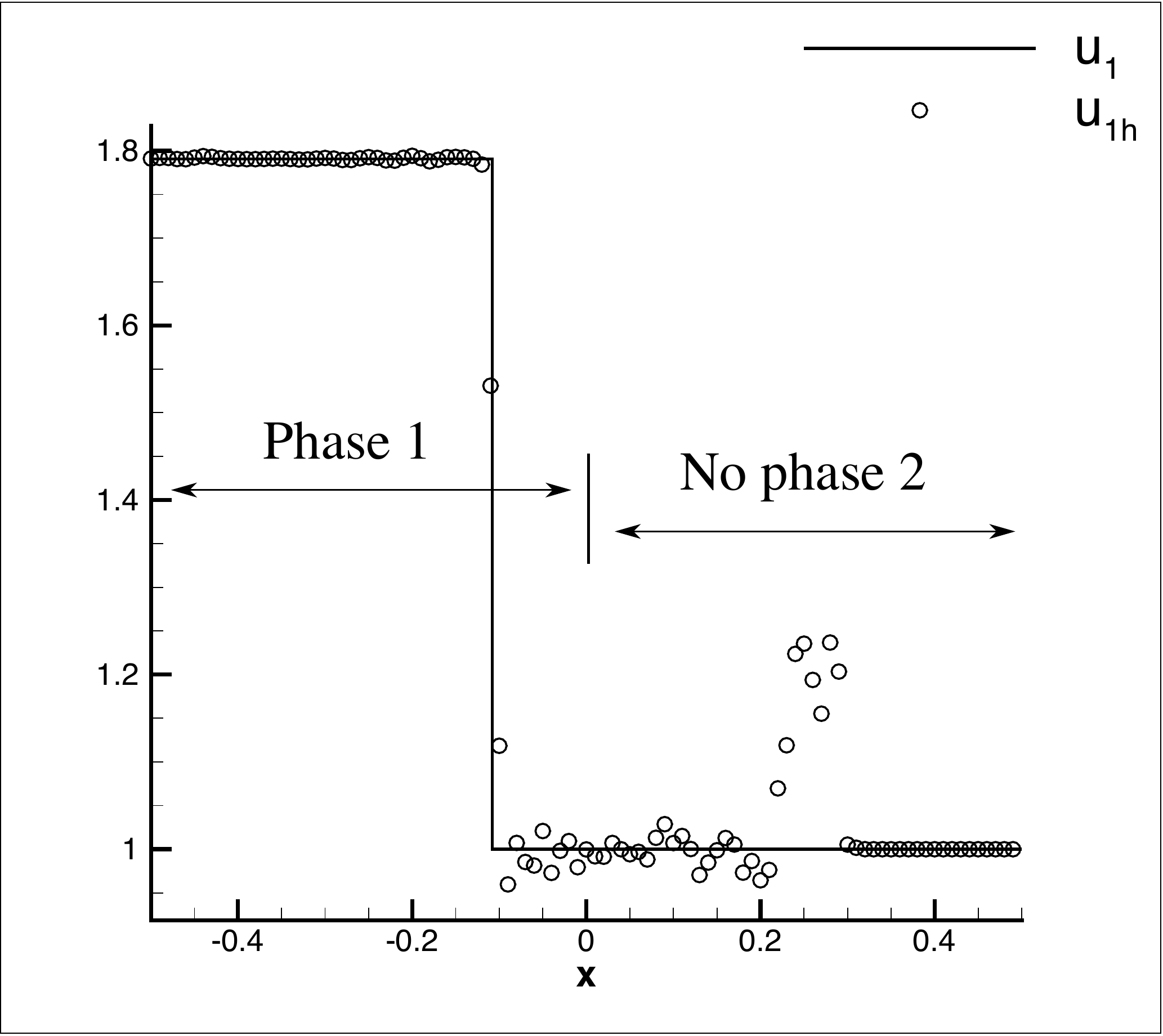}}
  \subfloat[$p_{1}$]{ 
  \includegraphics[height=.20\paperwidth,trim=0.2cm 0.2cm 0.2cm 0.2cm,clip=true]{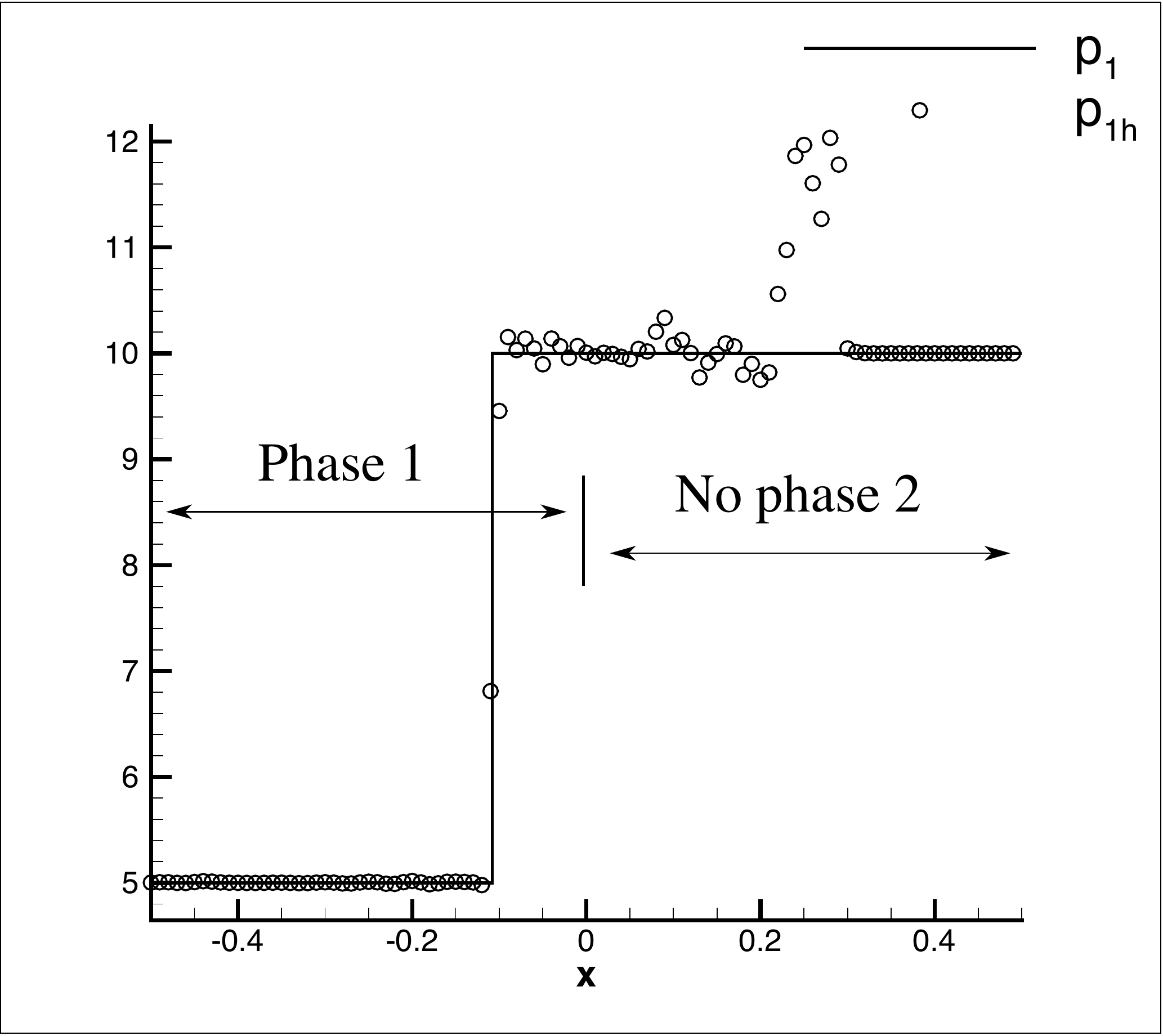}} \\
 \subfloat[$\rho_{2}$]{
  \includegraphics[height=.20\paperwidth,trim=0.2cm 0.2cm 0.2cm 0.2cm,clip=true]{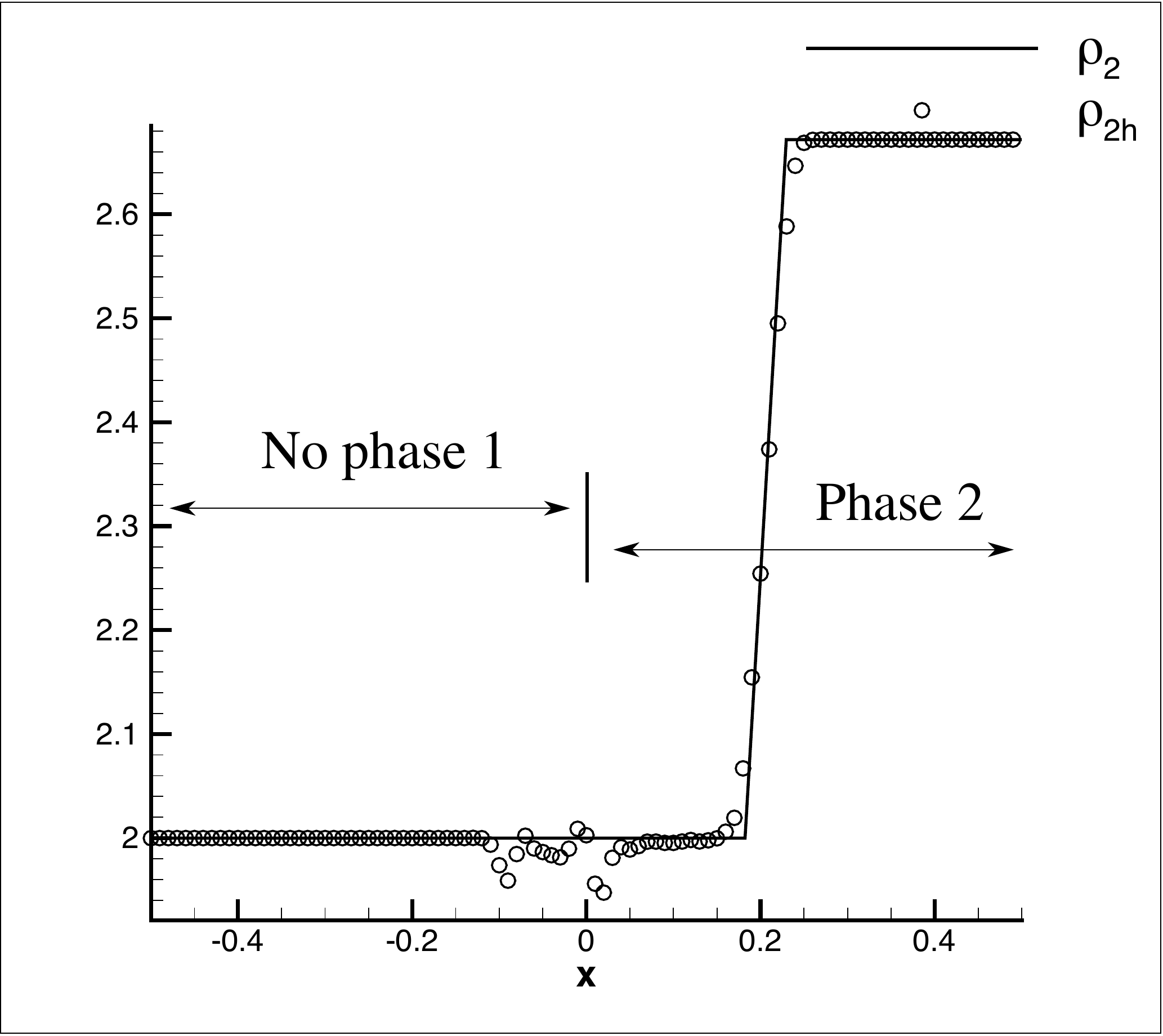}}
  \subfloat[$u_{2}$]{
  \includegraphics[height=.20\paperwidth,trim=0.2cm 0.2cm 0.2cm 0.2cm,clip=true]{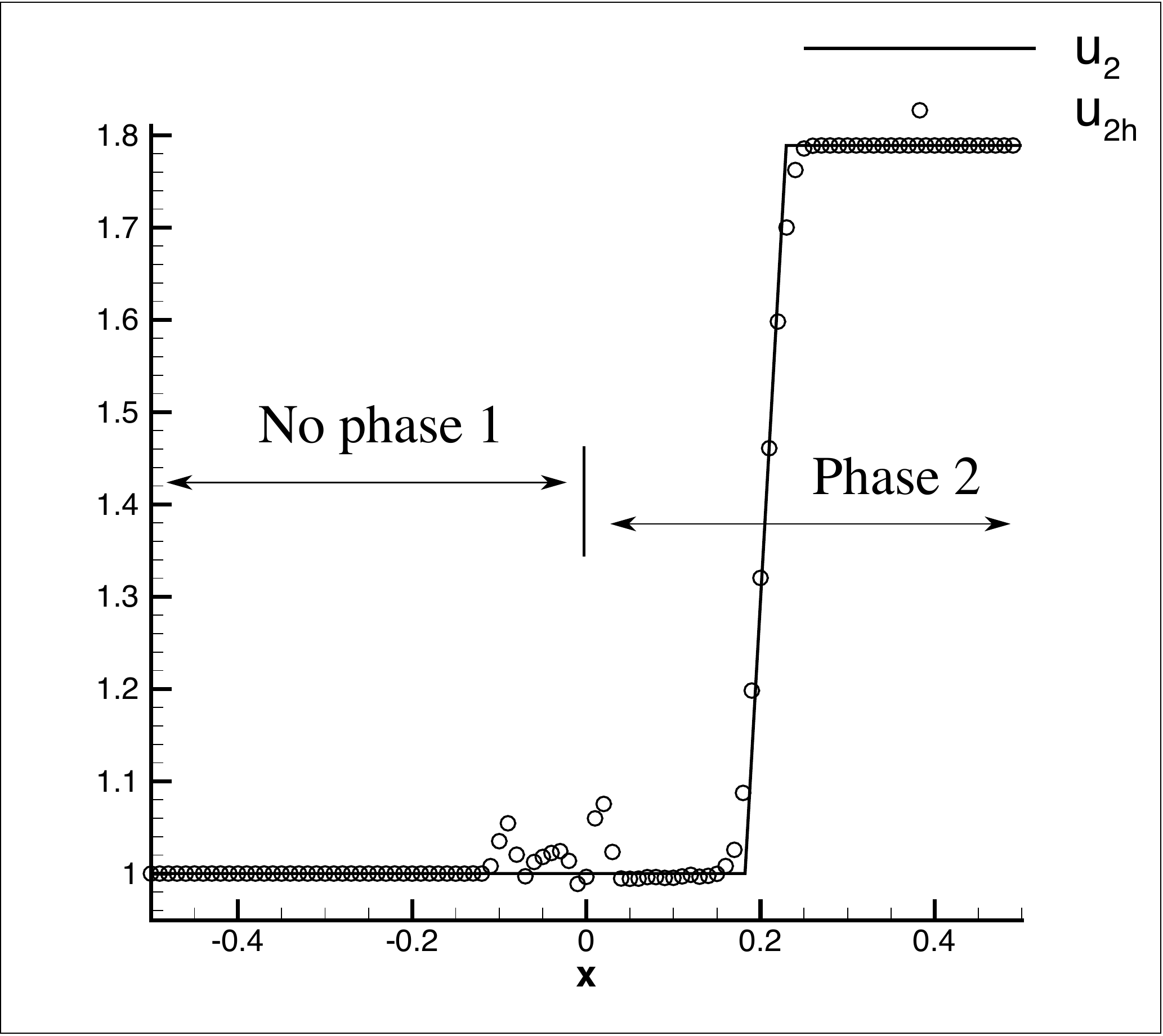}}
  \subfloat[$p_{2}$]{
  \includegraphics[height=.20\paperwidth,trim=0.2cm 0.2cm 0.2cm 0.2cm,clip=true]{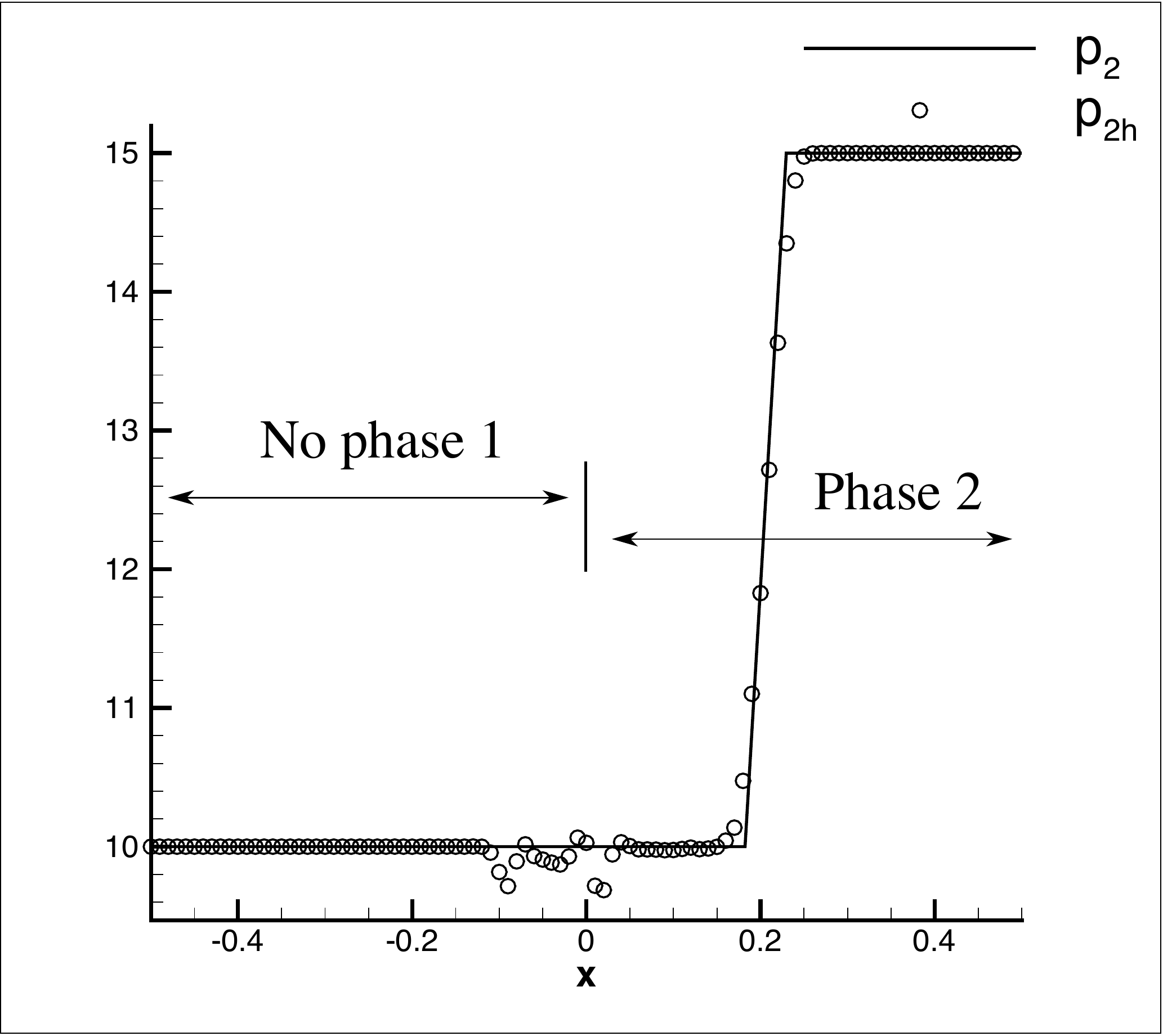}}
 \normalsize\caption{Comparison of the fourth order accurate numerical solution to the exact solution for test case RP5.}  
 \label{result: RP5}
\end{figure}

%
%
\section{Numerical tests in multiple space dimensions}\label{sec: extension to multi-space dim}
The Baer-Nunziato model in multiple space dimensions reads
\begin{linenomath*}
\begin{equation} \label{Eqn: 2D-BNM}
    \partial_t\vecu + \nabla \cdot\textbf{f}(\vecu) +\textbf{c}(\vecu)\nabla\vecu = 0, \quad \textbf{x}\in\mathbb{R}^d, \, t\geqslant 0,
\end{equation}
\end{linenomath*}
where
\begin{linenomath*}
\begin{equation*}
    \vecu := 
    \begin{pmatrix}
    \alpha_i\\
    \alpha_i\rho_i\\
    \alpha_i\rho_i\textbf{v}_i\\
    \alpha_i\rho_iE_i
    \end{pmatrix}, \quad
    \textbf{f}(\vecu) := 
    \begin{pmatrix}
    0\\
    \alpha_i\rho_i\textbf{v}_i^\top\\
    \alpha_i(\rho_i\textbf{v}_i\textbf{v}^\top_i+\mathrm{p}_i\textbf{I})\\
    \alpha_i(\rho_iE_i+\mathrm{p}_i)\textbf{v}_i^\top
    \end{pmatrix},\quad
    \textbf{c}(\vecu)\nabla\vecu :=
    \begin{pmatrix}
    \mathrm{\textbf{v}_I}^\top\\
    0\\
    -\pI\textbf{I}\\
    -\pI\mathrm{\textbf{v}_I}^\top
    \end{pmatrix}\nabla\alpha_i, \quad i = 1,2,
\end{equation*}
\end{linenomath*}
with $\textbf{v}_i=(u_i,v_i,w _i)^\top$ the velocity vector of the $i$th phase, $\mathrm{p}_i=\mathrm{p}_i(\rho_i,e_i)$ given by (\cref{Eqn: EOS}) and $e_i=E_i-\tfrac{1}{2}{\bf v}_i\cdot{\bf v}_i$ the specific internal energy.

The DGSEM scheme (\cref{eqn: modified DG semi-discrete}) can be extended to (\cref{Eqn: 2D-BNM}). The derivation of the scheme for Cartesian meshes is introduced in \ref{Appendix: DGSEM in 2D}, while the numerical fluxes for the above model are presented in \ref{Appendix: EC ES flux 2D}. Unless stated otherwise, the time step is computed with the CFL condition in \ref{Appendix: Positivity in multiD} and was seen to maintain positivity of the solution though it does not guaranty positivity of the partial internal energies.

Numerical experiments in two-space dimensions are given in the remainder of this section including tests on high-order accuracy and entropy conservation of the scheme, together with the simulation of a shock-bubble interaction problem.

\subsection{Advection of density and void fraction waves}
We here reproduce the test on accuracy from  \cref{ssec: density wave} and consider the pure advection of oblique void fraction and density waves in a uniform flow in a unit square with periodic boundary conditions. The initial condition reads
\begin{linenomath*}
\begin{equation}\label{eqn: 2D_IC_ho_accuracy}
 \alpha_{1,0}({\bf x}) = \frac{1}{2} + \frac{1}{4} \sin \big(4\pi (x+y)\big),\;
 \rho_{i,0}({\bf x})   = 1 + \frac{1}{2} \sin \big(2\pi (x+y)\big),\;
 u_{i,0}({\bf x})      = 1,\;
 v_{i,0}({\bf x})      = 1,\;
 \press_{i,0}({\bf x}) = 1, \; i=1,2.
\end{equation}
\end{linenomath*}
The EOS parameters in (\cref{Eqn: EOS}) are $\gamma_i = 1.4$ and $\press_{\infty_i} = 10$ for $i=1,2$. The obtained results are presented in \cref{table: high-order_accuracy_2D}. It is again observed that the expected $p+1$ order of convergence is achieved.

\begin{table}[ht]
    \centering
    \begin{tabulary}{1.0\textwidth}{ c|l|*{6}{c} }\hline
        $p$ & $h$ & $\norm{e_h}_{L^1(\Omega_h)}$ & $\mathcal{O}_1$ & $\norm{e_h}_{L^2(\Omega_h)}$ & $\mathcal{O}_2$ & $\norm{e_h}_{L^\infty(\Omega_h)}$ & $\mathcal{O}_\infty$\\ \hline
         \multirow{4}{*}{1}
         & 1/4  & 3.26E-01 & -    & 4.19E-01 & -    & 6.09E-01 & -\\
         & 1/8  & 2.92E-01 & 0.16 & 3.36E-01 & 0.32 & 5.13E-01 & 0.25\\
         & 1/16 & 8.40E-02 & 1.80 & 9.41E-02 & 1.84 & 1.57E-01 & 1.71\\
         & 1/32 & 1.42E-02 & 2.56 & 1.71E-02 & 2.46 & 3.39E-02 & 2.21\\
         & 1/64 & 4.14E-03 & 1.78 & 4.91E-03 & 1.80 & 1.03E-02 & 1.72\\ \hline
        \multirow{4}{*}{2}
         & 1/4  & 3.96E-02 & -    & 4.67E-02 & -    & 8.33E-02 & -\\
         & 1/8  & 8.75E-03 & 2.18 & 1.11E-02 & 2.07 & 4.03E-02 & 1.05\\
         & 1/16 & 1.41E-03 & 2.63 & 1.63E-03 & 2.78 & 4.57E-03 & 3.14\\
         & 1/32 & 1.19E-04 & 3.57 & 1.58E-04 & 3.36 & 4.10E-04 & 3.48\\
         & 1/64 & 1.23E-05 & 3.28 & 1.73E-05 & 3.19 & 6.26E-05 & 2.71\\ \hline
        \multirow{4}{*}{3}
         & 1/4  & 1.64E-02 & -    & 1.88E-02 & -    & 3.11E-02 & -\\
         & 1/8  & 2.20E-03 & 2.90 & 2.70E-03 & 2.80 & 6.96E-03 & 2.16\\
         & 1/16 & 7.10E-05 & 4.95 & 8.90E-05 & 4.92 & 2.56E-04 & 4.60\\
         & 1/32 & 1.35E-06 & 5.71 & 1.79E-06 & 5.64 & 9.68E-06 & 4.89\\
         & 1/64 & 6.50E-08 & 4.38 & 8.64E-08 & 4.37 & 5.21E-07 & 4.22\\ \hline
    \end{tabulary}
    \caption{Test for high-order accuracy with initial condition (\cref{eqn: 2D_IC_ho_accuracy}): different norms of the errors on $\tfrac{1}{2}(\rho_1+\rho_2)$ under grid and polynomial degree refinements and associated orders of convergence at final time $T_{max} = 5$.}
    \label{table: high-order_accuracy_2D}
\end{table}

\subsection{Entropy conservation}
We also check entropy conservation by using the same procedure as in \cref{sssec: entropy conservation} on the unit square with periodic boundary conditions. The initial condition is EC in \cref{table: RP_IC} with zero transverse velocity, $v_i=0$ for $i=1,2$, and we keep the same EOS parameters. The global entropy budget, similar to (\cref{eqn: entropy_budget}), is displayed in \cref{table: EC_results_2D} when refining the time step. Again the conservation of entropy by the space discretization is observed.

\begin{table}[ht]
\centering
 \begin{tabular}{l |c |c} 
 \hline
 time step & $\mathcal{E}_\Omega(t)$ & $\mathcal{O}$ \\ [0.5ex] 
 \hline
 $\Delta t$    & 7.49E-04 & - \\ 
 $\Delta t/2$  & 1.07E-04 & 2.81\\
 $\Delta t/4$  & 1.37E-05 & 2.97 \\
 $\Delta t/8$  & 1.72E-06 & 2.99 \\
 $\Delta t/16$ & 2.15E-07 & 3.00 \\ 
 $\Delta t/32$ & 2.67E-08 & 3.01 \\
 $\Delta t/64$ & 3.19E-09 & 3.06 \\
 \hline
\end{tabular}
\caption{Global entropy budget (\cref{eqn: entropy_budget}) in two space dimensions and the corresponding order of convergence
$\mathcal{O}$.}
\label{table: EC_results_2D}
\end{table}

\subsection{Shock-bubble interaction}
This numerical test involves the interaction between a shock wave and a material discontinuity. The test was introduced by Haas and Sturtevant \cite{haas1987interaction} to experimentally study the interaction of a shock wave with a single discrete gaseous inhomogeneity. Later it was adopted as a numerical benchmark to validate the robustness and accuracy of various numerical schemes for compressible two-phase flows, see \cite{quirk1996dynamics,saurel2003multiphase,giordano2006richtmyer,kawai2011high,hu2006conservative,johnsen2006implementation,terashima2009front,renac2020multicomp} and references therein.  

The computational domain $\Omega_h = [0,6.5]\times[0,1.78]$ is discretized using a Cartesian mesh with $1300 \times 356$ elements. The initial condition involves a bubble of unit diameter containing a mixture of $95\%$ of helium by volume ($\alpha_1=0.95$) and $5\%$ of air, to exclude resonance effects (\cref{eqn: resonance}), in a domain filled with  $5\%$ of air. The center of the bubble is located at ${\bf x}=(3.5,0.89)$. A left moving shock is initially placed at the rightmost edge of the bubble, $x_0=4$, and then moves to the left and interacts with the bubble. The initial condition is provided in \cref{table: SBI_IC}.

\begin{table}[ht]
\centering
    \begin{tabulary}{1.0\textwidth}{ c|*{5}{c} }\hline
                   & $\alpha_1$ & $\rho_i$ & $u_i$     & $v_i$ & $\mathrm{p}_i$\\\hline
    Pre-shock air ($i=2$)  & 0.05 & 1.3764 & -0.3336 & 0.0   & 1.1213\\\hline
    Helium bubble ($i=1$)  & 0.95 & 0.1819 & 0.0     & 0.0   & 0.7143\\\hline
    Post-shock air ($i=2$) & 0.05 & 1.0    & 0.0     & 0.0   & 0.7143\\
     \hline
    \end{tabulary}
    \caption{Physical parameters for the initial condition of the shock-bubble interaction problem.}
    \label{table: SBI_IC}
\end{table}

The EOS parameters for helium and air are $\gamma_1 = 1.648$ and $\Cv_1 = 6.06$, and $\gamma_2 = 1.4$ and $\Cv_2 = 1.786$, respectively. The physical model does not involve viscous effects so to avoid oscillations of the interface we smoothen the initial condition around the material interface following \cite{kawai2011high,houim2011low,billet2008impact}. The numerical test is performed using periodic boundary conditions at the top and bottom boundaries, and non-reflective conditions on the left and right boundaries.

\Cref{result: sbi} illustrates the deformation of the He bubble as the shock passes through it. The plotted fields are those of the void fraction for phase 1, the total pressure and numerical Schlieren.  It is observed that the material interface and the shock are accurately captured without excessive smearing of the contact. Note however that, for the Baer-Nunziato model, the pressure field shows the presence of a secondary shock inside the bubble (see e.g. the Schlieren at $t=62 \mu$s). This secondary shock is due to the presence of air inside the bubble. Furthermore, as the shock leaves the bubble, vortices are generated on the bubble interface as a result of the Kevin-Helmoltz instability.

\Cref{Fig: SBI space-time diags.} shows the space-time diagram for three characteristic points on the interface of the bubble. We compare the results obtained with the DGSEM scheme to reference data from \cite{kawai2011high}. The deformation of the bubble shows complete agreement with the reference data and indicate that the smooth initial condition does not affect the global deformation of the bubble.

\begin{figure}[H]
 \center
  \subfloat[$t =32 \mu$s]{
  \includegraphics[width=.20\paperwidth,trim=0.3cm 0.3cm 0.3cm 0.3cm,clip=true]{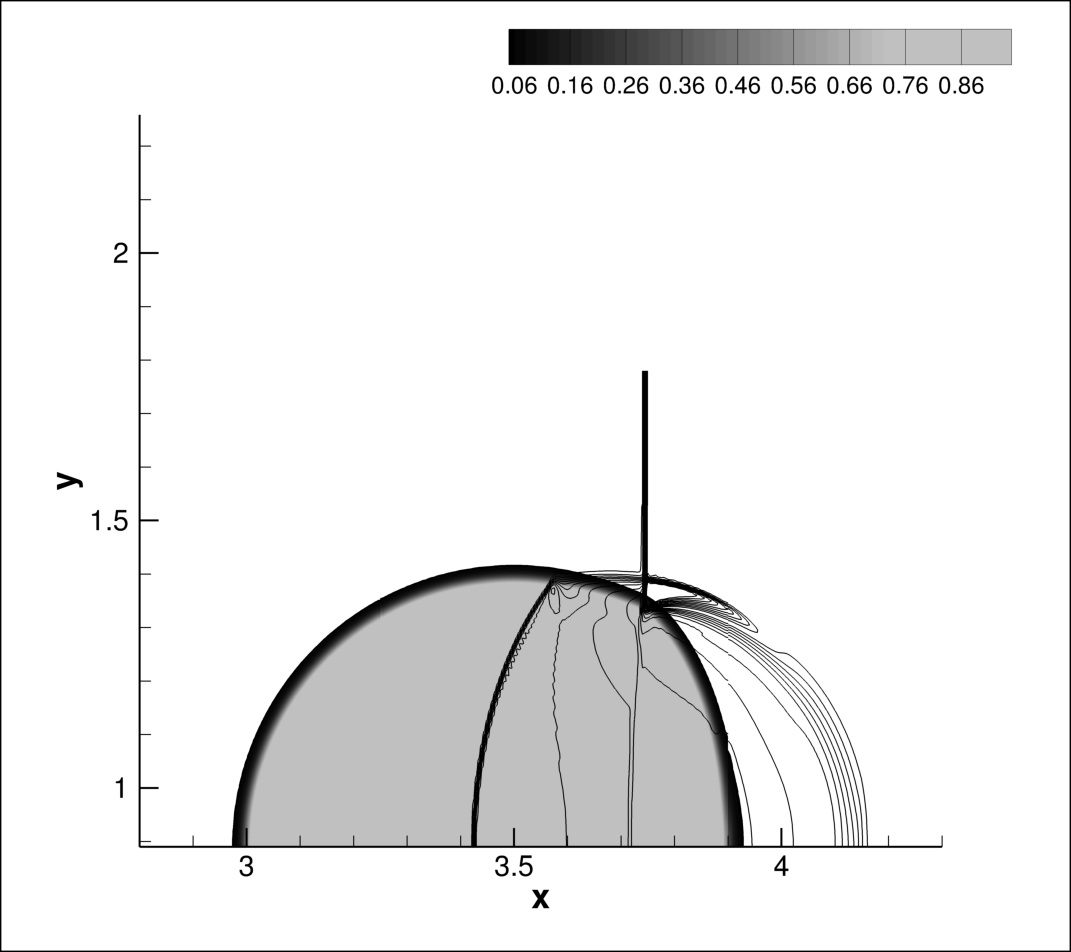}
  \includegraphics[width=.20\paperwidth,trim=0.3cm 0.3cm 0.3cm 0.3cm,clip=true]{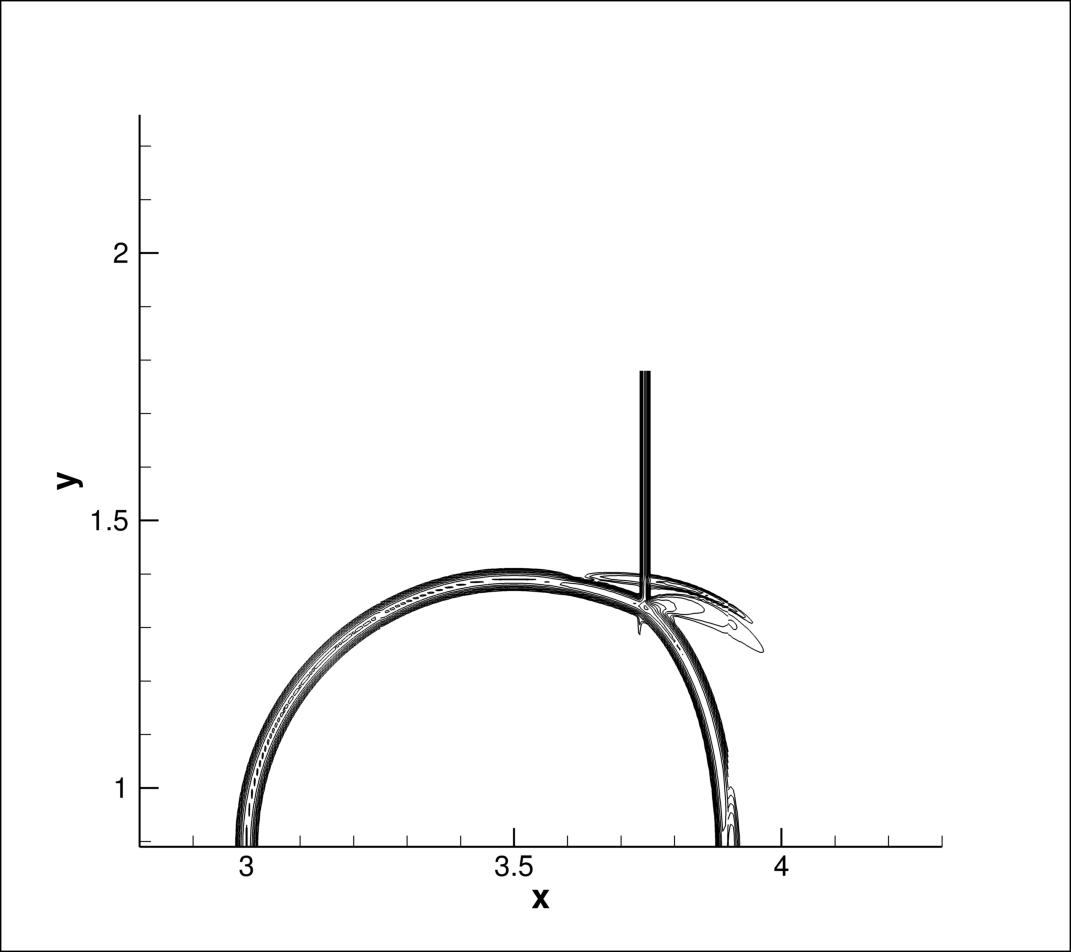}}
  \subfloat[$t=240 \mu$s]{
  \includegraphics[width=.20\paperwidth,trim=0.3cm 0.3cm 0.3cm 0.3cm,clip=true]{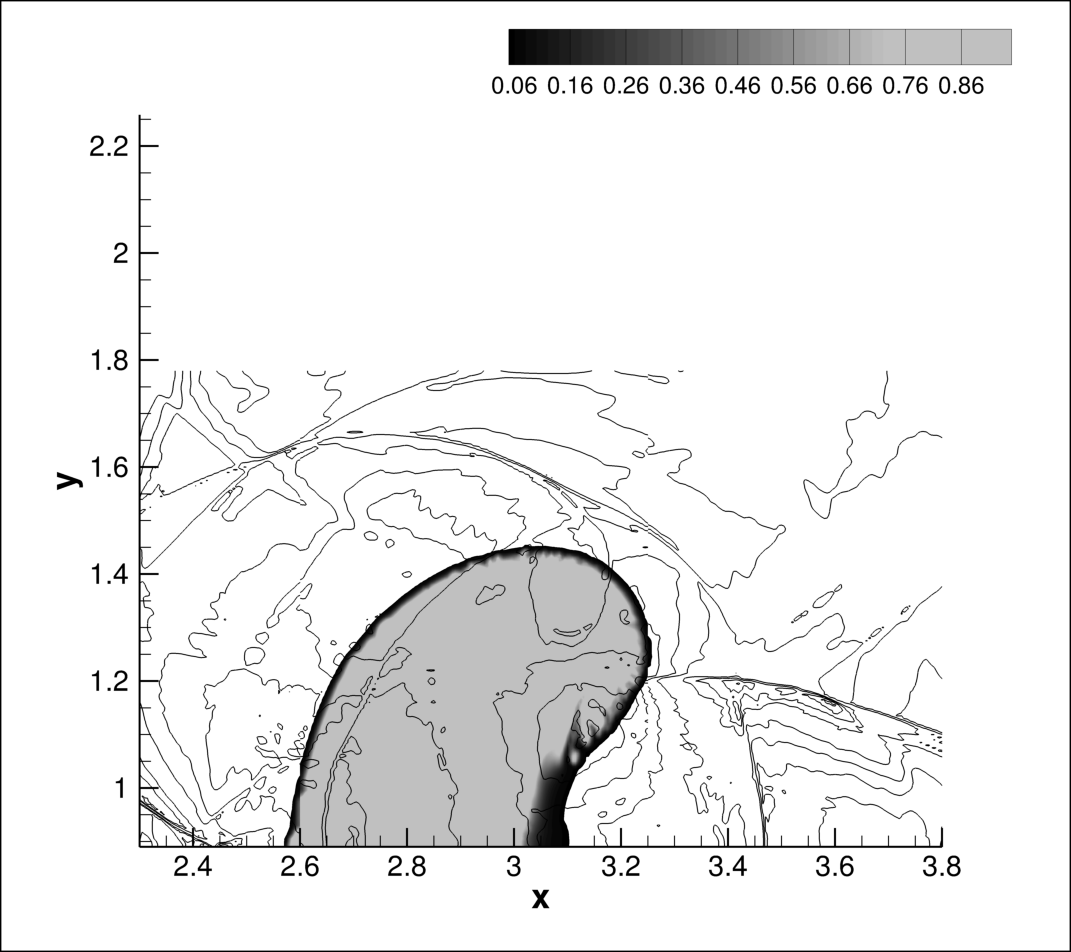}
  \includegraphics[width=.20\paperwidth,trim=0.3cm 0.3cm 0.3cm 0.3cm,clip=true]{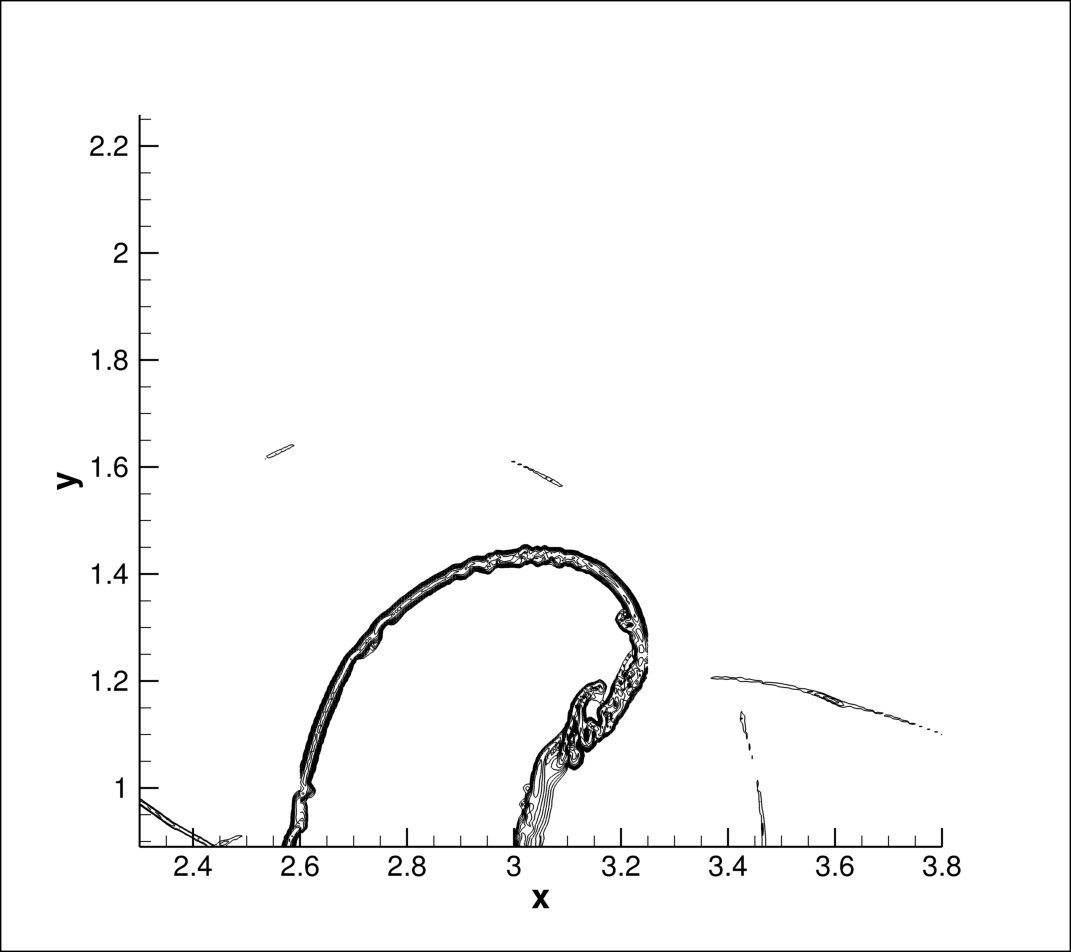}} \\
 \subfloat[$t=62 \mu$s]{
  \includegraphics[width=.20\paperwidth,trim=0.3cm 0.3cm 0.3cm 0.3cm,clip=true]{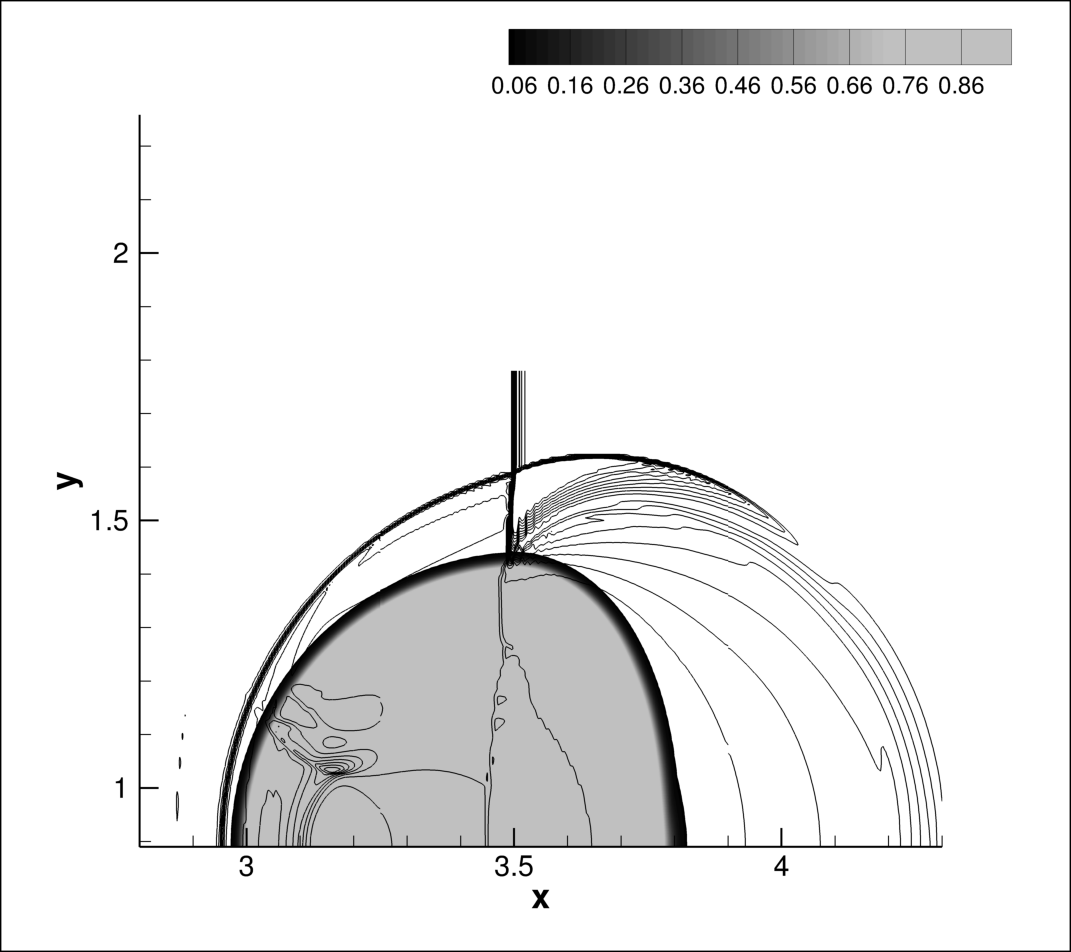}
  \includegraphics[width=.20\paperwidth,trim=0.3cm 0.3cm 0.3cm 0.3cm,clip=true]{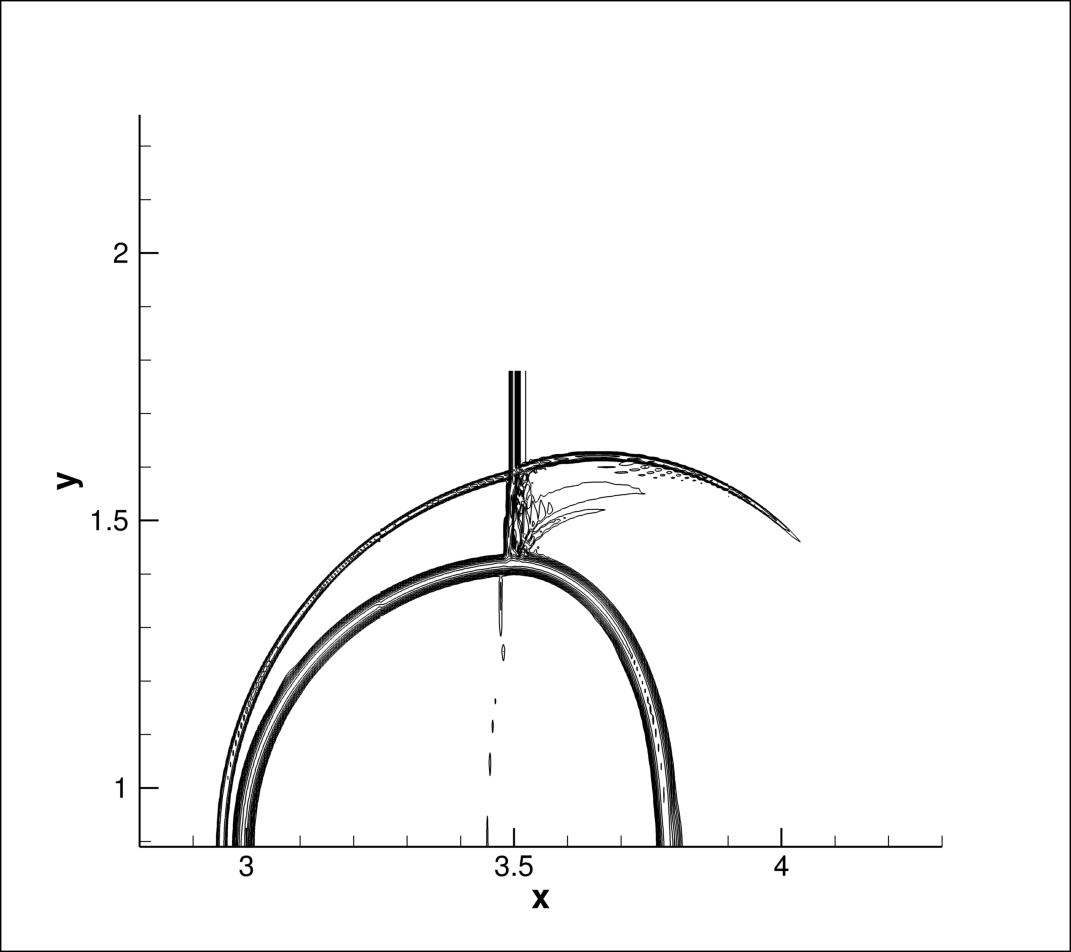}}
  \subfloat[$t=427 \mu$s]{
  \includegraphics[width=.20\paperwidth,trim=0.3cm 0.3cm 0.3cm 0.3cm,clip=true]{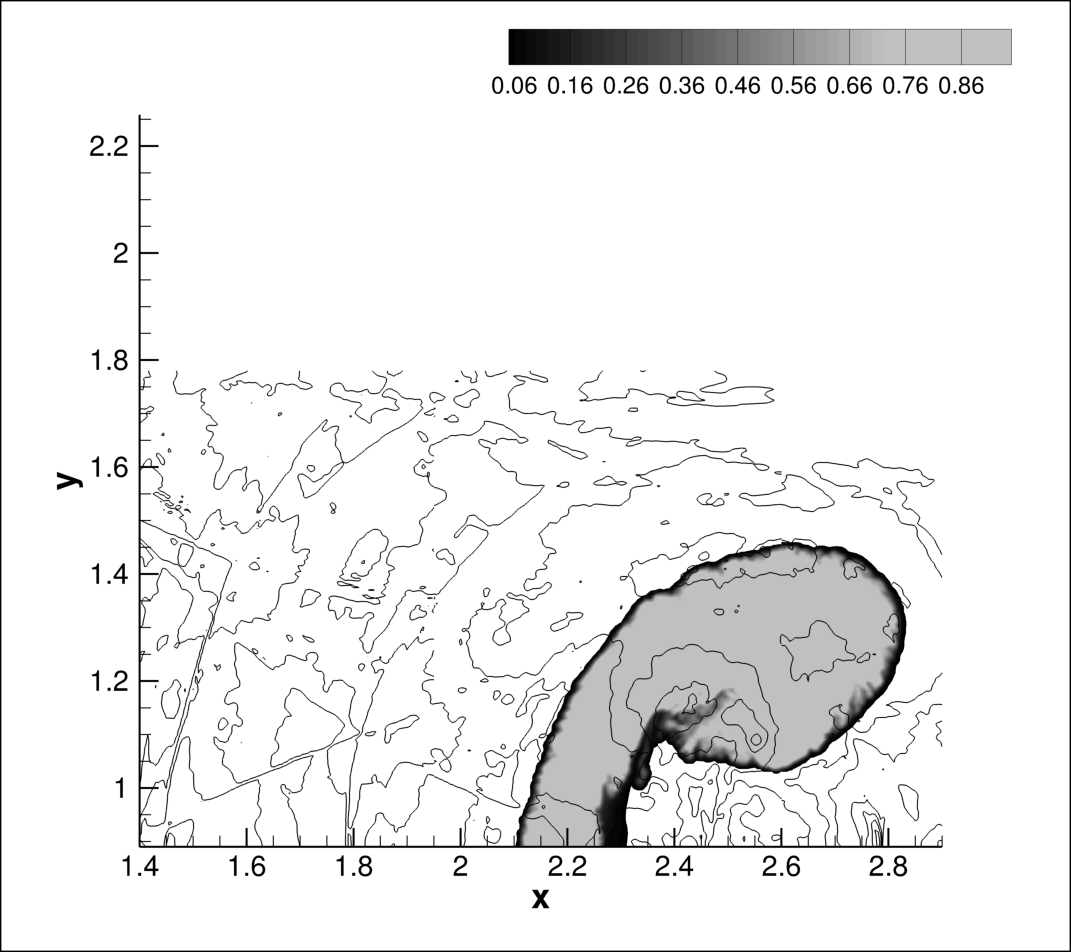}
  \includegraphics[width=.20\paperwidth,trim=0.3cm 0.3cm 0.3cm 0.3cm,clip=true]{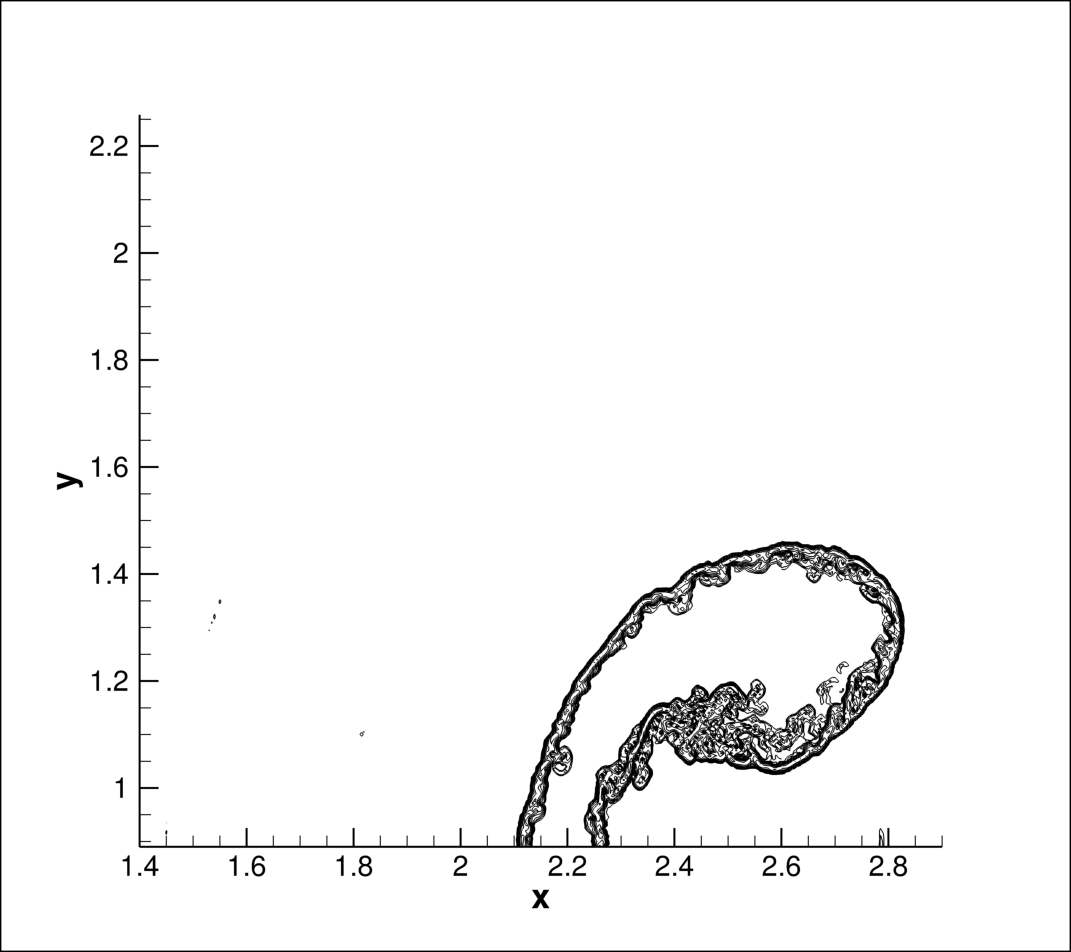}} \\
 \subfloat[$t=102 \mu$s]{
  \includegraphics[width=.20\paperwidth,trim=0.2cm 0.2cm 0.2cm 0.2cm,clip=true]{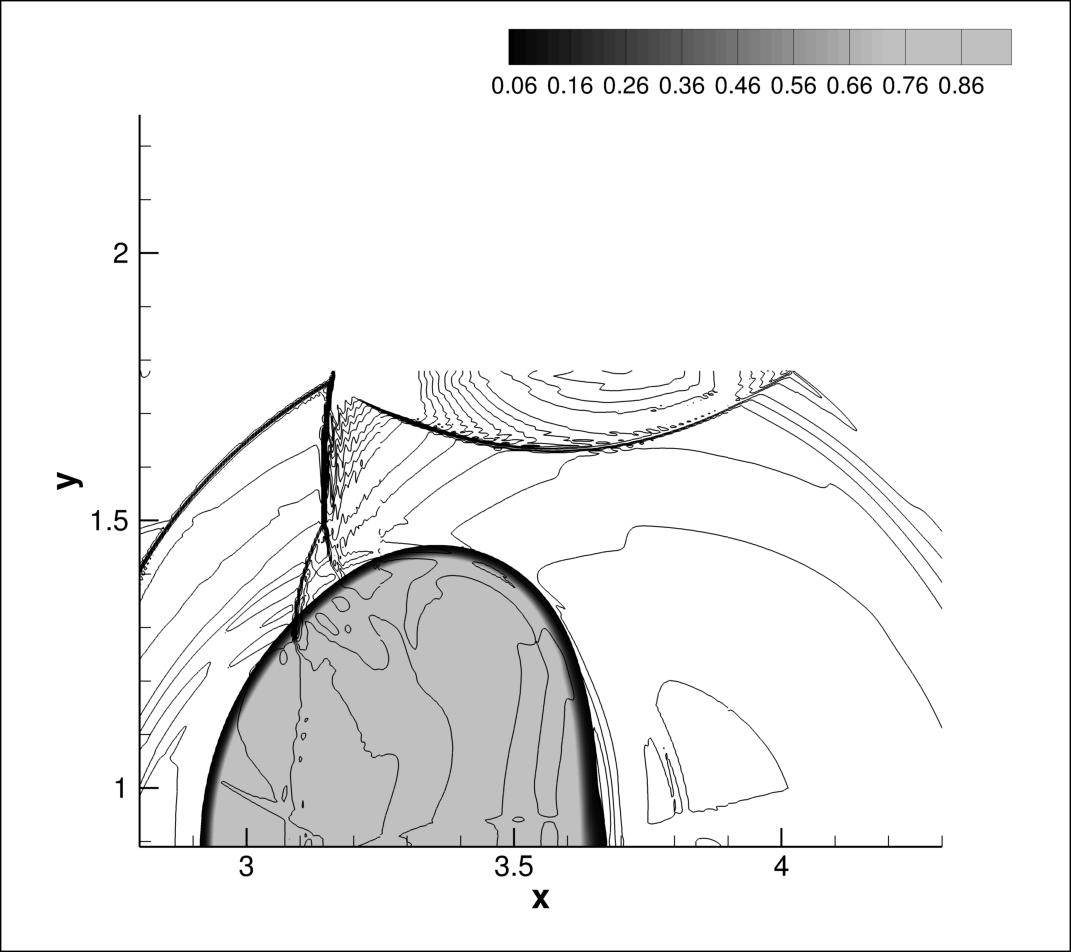}
  \includegraphics[width=.20\paperwidth,trim=0.2cm 0.2cm 0.2cm 0.2cm,clip=true]{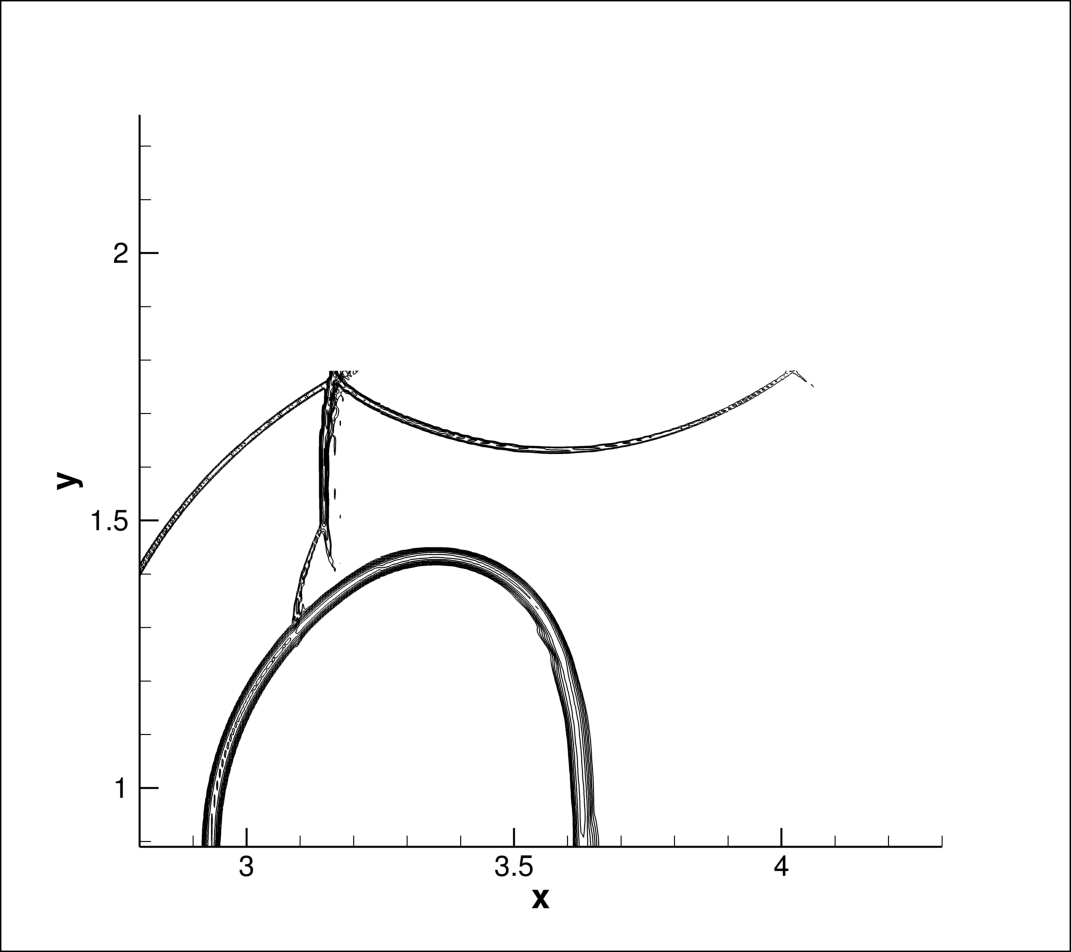}}
  \subfloat[$t=674 \mu$s]{
  \includegraphics[width=.20\paperwidth,trim=0.2cm 0.2cm 0.2cm 0.2cm,clip=true]{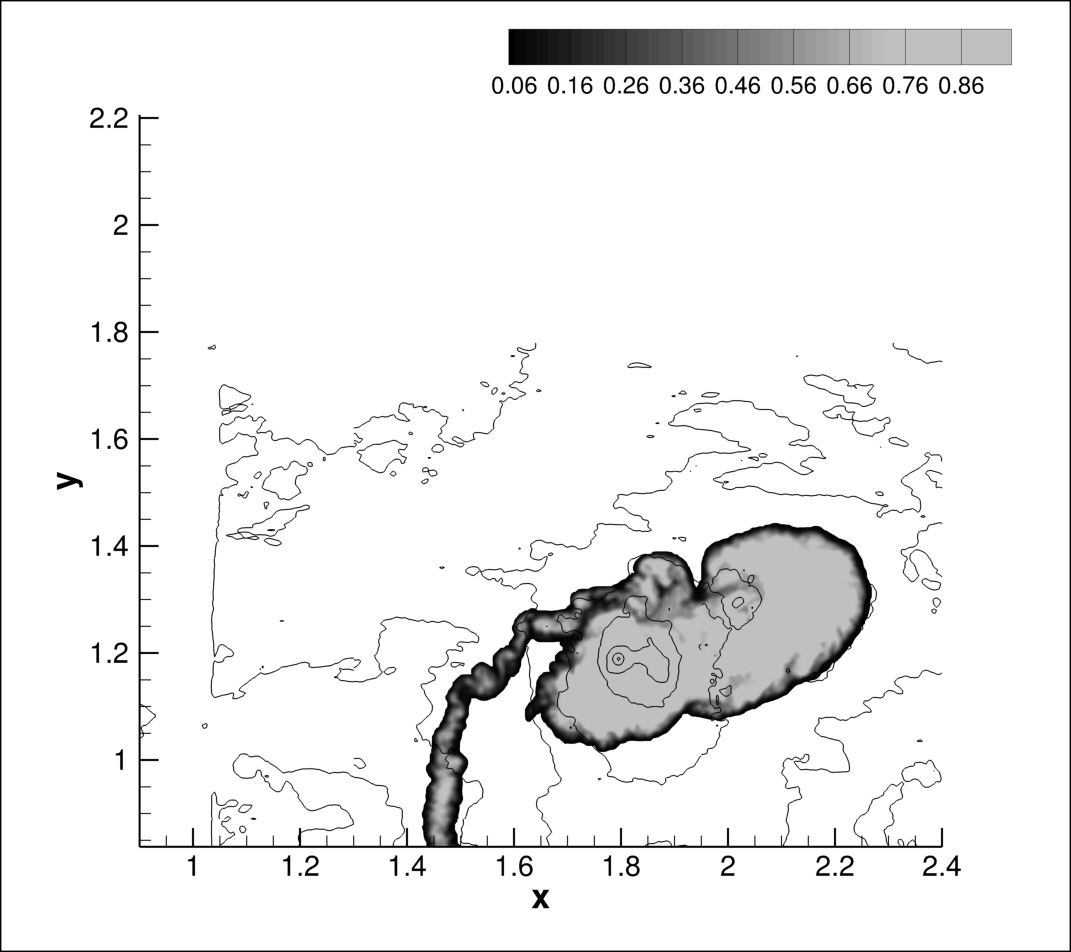}
  \includegraphics[width=.20\paperwidth,trim=0.2cm 0.2cm 0.2cm 0.2cm,clip=true]{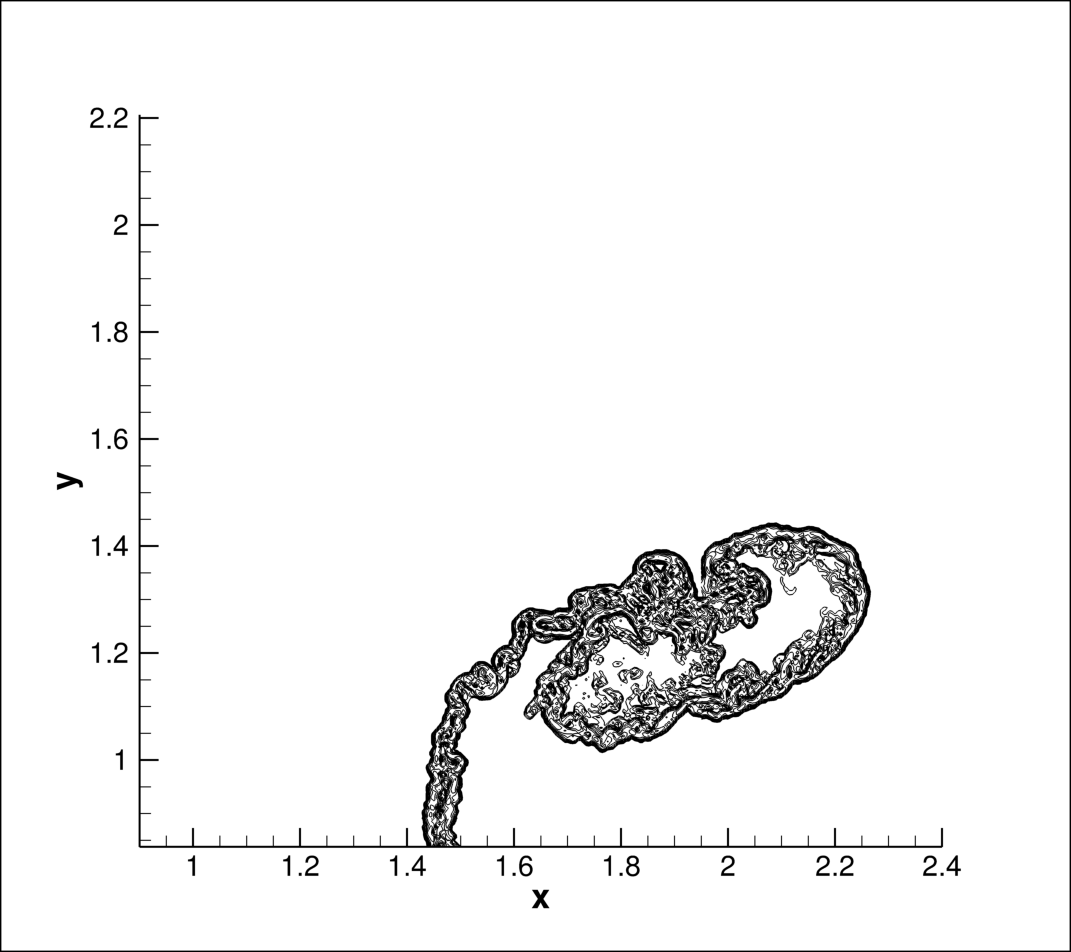}}
  \caption{The snapshots of the deformation of the He bubble due to the left traveling shock at various physical times. For each snapshot, the left plot displays contours of the void fraction $\alpha_1$ and of the total pressure $\press=\alpha_1\press_1+\alpha_2\press_2$, while the right plot shows the Schlieren $\phi = \exp(|\nabla \rho|/|\nabla \rho|_{\max})$, with $\rho=\alpha_1\rho_1+\alpha_2\rho_2$, obtained with a polynomial degree $p=3$.}
  \label{result: sbi}
\end{figure}

\begin{figure}[H]
 \center
  \includegraphics[height=.25\paperwidth]{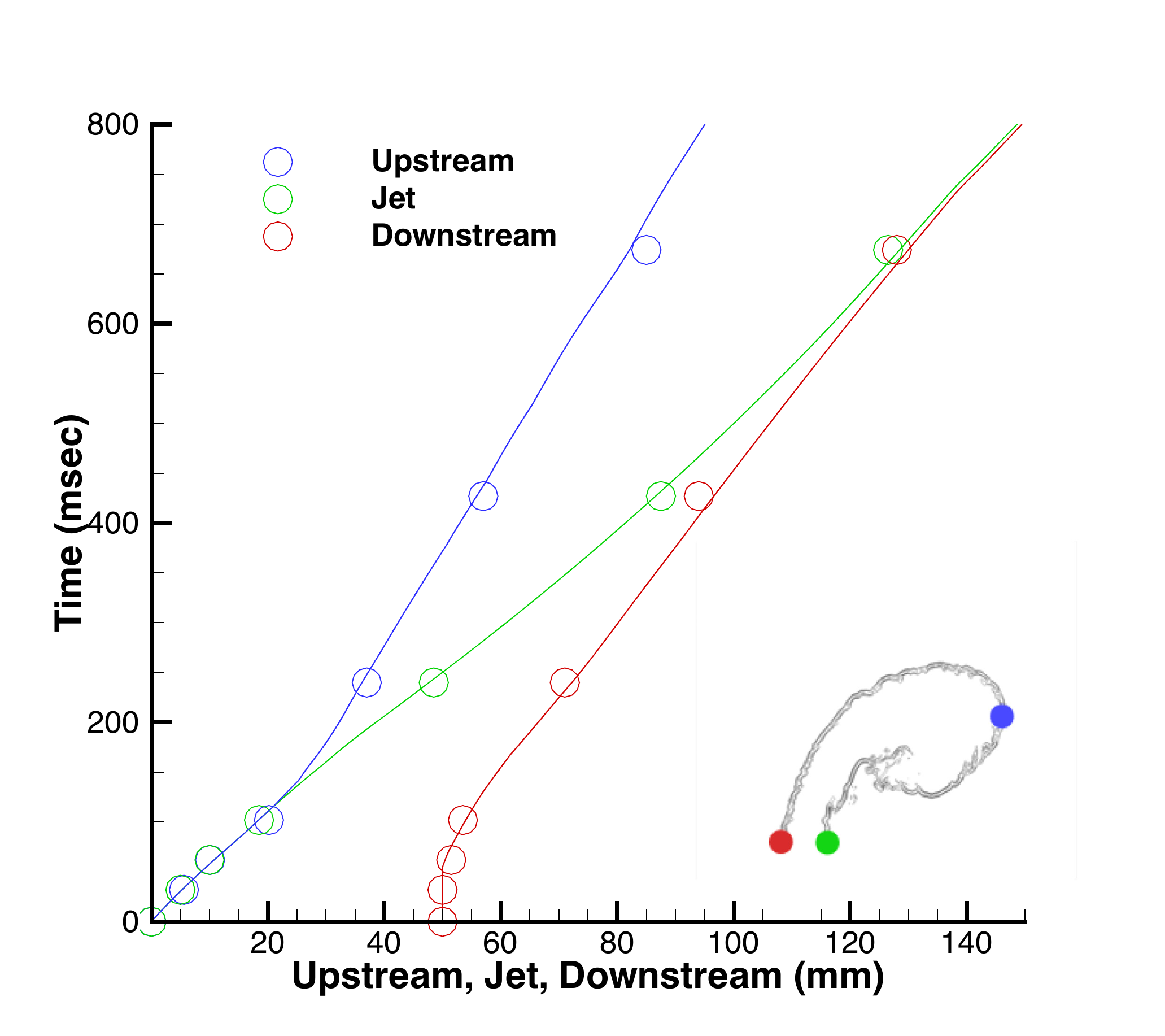}
  \caption{Space-time diagram for three characteristic points on the interface of the He bubble. The solid lines are the reference data from \cite{kawai2011high}, while the symbols are the results obtained with the present DGSEM scheme and a polynomial degree $p=3$.}
  \label{Fig: SBI space-time diags.}
\end{figure}

%
%
\section{Concluding remarks} \label{sec: Conclusion}
In this work, we derive a high-order entropy stable scheme for the Baer-Nunziato model \cite{baer1986two,saurel1999multiphase} for flows of two separated immiscible fluids in complete disequilibria with respect to the chemical, mechanical, thermal, and thermodynamic processes. Here we focus on the discretization of the convective part of the model and neglect the disequilibria source terms. The exchange of information at the interfaces of the fluids is governed through interface variables of pressure and velocity, for which we choose closure laws \cite{coquel2002closure,gallouet2004numerical} that allow the material interface to be associated to a LD field and an entropy inequality in conservative form to be derived from the model. The model is closed with stiffened gas EOS relevant for both gas and liquid phases.

The space discretization is performed by using the semi-discrete entropy stable DGSEM framework proposed in \cite{renac2019entropy}, which involves modifying the integration over cell elements by replacing the physical fluxes with two point entropy conservative fluxes in fluctuation form \cite{pares2006numerical,castro2013entropy}, while employing entropy stable fluctuation fluxes at the cell interfaces. This framework is here generalized to include both conservative and nonconservative terms to allow a conservative discretization of the former ones. The entropy conservative fluxes are derived by using the condition in \cite{castro2013entropy}, to which we add upwind type dissipation to obtain the entropy stable fluxes. The semi-discrete scheme is high-order accurate for smooth solutions, satisfies an entropy inequality, and is kinetic energy preserving.

We use a method of lines with an explicit time integration and propose conditions on the numerical parameters that guarantee the positivity of the cell-averaged partial densities and a maximum principle on the void fraction for the fully discrete scheme coupled with a first-order forward Euler discretization. High-order integration in time is performed using strong stability-preserving explicit Runge-kutta schemes \cite{shu1988efficient}. The positivity of the solution is then extended to nodal values using a posteriori limiters, inspired from \cite{zhang2010positivity,zhang2010maximum,perthame1996positivity}. 

The numerical tests involve specific test cases that validate the high-order accuracy and entropy conservation of the semi-discrete scheme in one and two space dimensions. Riemann problems are performed in one space dimension involving the development of strong shocks, contacts, near vacuum regions, and vanishing phases. The results obtained with a fourth-order scheme show that the present method captures the physically relevant solution. The intermediate states are well resolved, as well as the shocks and contacts and the computation is shown to be robust in situations close to either vacuum, or resonance. Furthermore, the application to the simulation of a shock-bubble interaction problem in two space dimensions confirm the accurate approximation of the shock and material interfaces.

\section*{Acknowledgement}

The authors would like to thank Prof. Soshi Kawai for sharing the reference data for the space-time diagrams in \cref{Fig: SBI space-time diags.}.

%
%
\appendix

\section{DGSEM in multiple space dimensions} \label{Appendix: DGSEM in 2D}

We here extend the DGSEM to multiple space dimensions and restrict ourselves to Cartesian meshes. For the sake of clarity we introduce the scheme in two space dimensions, $d=2$, on uniform grids without loss of generality.

The physical domain $\Omega$ is discretized with a Cartesian grid $\Omega_h$ with elements $\kappa_{i,j}=[x_{i-\frac{1}{2}},x_{i+\frac{1}{2}}]\times[y_{j-\frac{1}{2}},y_{j+\frac{1}{2}}]$ with $x_{i+\frac{1}{2}}=ih_x$, $y_{j+\frac{1}{2}}=jh_y$, where $h_x>0$ and $h_y>0$ are the space steps.The Cartesian coordinate system is denoted as $(0,{\bf e_x}, {\bf e_y})$. Each element $\kappa_{i,j}$ is defined through the mapping $\textbf{x}_{i,j}:I^2\ni(\xi,\eta)\mapsto{\bf x}=\textbf{x}_{i,j}(\xi,\eta)\in\kappa_{i,j}$ with $I^2=[-1,1]^2$. The function space $\mathcal{V}^p_h$ restricted onto an element $\kappa_{i,j}$ is spanned with functions defined as tensor products of one-dimensional Lagrange polynomials associated to the Gauss-Lobatto nodes (see \cref{Numerical soln}):
\begin{linenomath*}
\begin{equation*}
    \phi^{kl}_{i,j}\big(\textbf{x}_{i,j}(\xi,\eta)\big):= \ell_k(\xi)\ell_l(\eta), \quad 0 \leqslant k,l \leqslant p,
\end{equation*}
\end{linenomath*}
which satisfy the cardinality relation $\ell_k(\xi_{\tilde{k}})\ell_l(\eta_{\tilde{l}}) = \delta_{\tilde{k}k}\delta_{\tilde{l}l}$ for $0 \leqslant \tilde{k},k,\tilde{l},l \leqslant p$. The approximate solution is now represented as
\begin{linenomath*}
\begin{equation*}
    \vecu_h(\textbf{x},t):= \sum^p_{k,l=0} \phi^{kl}_{i,j}(\textbf{x}) \textbf{U}^{kl}_{i,j}(t) \quad \forall \textbf{x} \in \kappa_{i,j},\; t \geqslant 0.
\end{equation*}
\end{linenomath*}


The integrals over the physical elements and faces are approximated with Gauss-Lobatto quadratures:
\begin{linenomath*}
\begin{equation*}
        \int_{\kappa_{i,j}} f(\textbf{x})dV \approx \sum^p_{k,l=0}\omega_k\omega_l \frac{h_xh_y}{4}f(\textbf{x}_{i,j}^{kl}), \quad \int_{e} f(\textbf{x})dS \approx \sum^p_{k=0}\omega_k \frac{|e|}{2}f(\textbf{x}_e^{k}),
\end{equation*}
\end{linenomath*}
where $\omega_k$ and $\omega_k\omega_l$ are the Gaussian weights, and $|e|$ is the length of $e$. 

The semi-discrete DGSEM for the discretization of (\cref{Eqn: 2D-BNM}) then reads
\begin{linenomath*}
\begin{align*}
 \frac{h_xh_y}{4}\frac{d{\bf U}_{i,j}^{kl}}{dt} &+ \omega_l\frac{h_y}{2}\bigg(\sum^p_{m=0}\omega_kD_{km}\tD(\textbf{U}^{kl}_{i,j},\textbf{U}^{ml}_{i,j},{\bf e_x}) + \delta_{kp}\D^-(\textbf{U}^{pl}_{i,j},\textbf{U}^{0l}_{i+1,j},{\bf e_x}) +\delta_{k0}\D^+(\textbf{U}^{pl}_{i-1,j},\textbf{U}^{0l}_{ij},{\bf e_x})\bigg) \\
 &+ \omega_k\frac{h_x}{2}\bigg(\sum^p_{m=0}\omega_lD_{lm}\tD(\textbf{U}^{kl}_{i,j},\textbf{U}^{km}_{i,j},{\bf e_y}) + \delta_{lp}\D^-(\textbf{U}^{kp}_{i,j},\textbf{U}^{k0}_{i,j+1},{\bf e_y}) +\delta_{l0}\D^+(\textbf{U}^{kp}_{i,j-1},\textbf{U}^{k0}_{ij},{\bf e_y})\bigg) = 0,
\end{align*}
\end{linenomath*}
with
\begin{linenomath*}
\begin{equation*}
 \tD(\vecu^-,\vecu^+,{\bf n}) := \D^-_{ec}(\vecu^-,\vecu^+,{\bf n}) - \D^+_{ec}(\vecu^+,\vecu^-,{\bf n}),
\end{equation*}
\end{linenomath*}
and the numerical fluxes are defined in \ref{Appendix: EC ES flux 2D}.

%
\section{Entropy conservative and entropy stable fluxes in multiple space dimensions} \label{Appendix: EC ES flux 2D}
In multidimensional space, for solutions belonging to the phase space
\begin{linenomath*}
\begin{equation*}
 \Omega_{\BNM} = \big\{\vecu\in\mathbb{R}^{5+2d}:\, \rho_i>0,\, {\bf v}_i\in\mathbb{R}^d,\, e_i>0,\, 0<\alpha_i<1, \; i=1,2 \big\},
\end{equation*}
\end{linenomath*}
the entropy conservative fluxes (\cref{Eqn: fluctuation flux}) are defined as follows:
\begin{linenomath*}
\begin{equation*}
    \D^\mp_{ec}(\vecu^-,\vecu^+,{\bf n}) = \pm\textbf{h}(\vecu^-,\vecu^+,\textbf{n})\mp\textbf{f}(\vecu^\mp)\cdot\textbf{n}+\normalfont{\textbf{d}}^\mp(\vecu^-,\vecu^+,\textbf{n}),
\end{equation*}
\end{linenomath*}
for the system (\cref{Eqn: 2D-BNM}). They are assumed to be consistent $\D^\mp_{ec}(\vecu,\vecu,{\bf n}) = 0$ and are defined as follows:
\begin{linenomath*}
\begin{subequations}
\begin{equation*}
    \normalfont{\textbf{h}}(\vecu^-,\vecu^+,\textbf{n}) := 
    \begin{pmatrix*}
    0\\
    \displaystyle\xoverline{\alpha}_i\hat{\rho}_i\xoverline{\textbf{v}}_i\cdot \textbf{n}\\
    \displaystyle\xoverline{\alpha}_i\left(\hat{\rho}_i(\xoverline{\textbf{v}}_i\cdot\textbf{n})\xoverline{\textbf{v}}_i+\frac{\xoverline{\mathrm{p}_i\theta_i}}{\xoverline{\theta}_i}\n\right)\\
    \xoverline{\alpha}_i\left(\hat{\rho}_i\left(\displaystyle\frac{\Cv_i}{\hat{\theta}_i}+\frac{\textbf{v}^-_i\cdot\textbf{v}^+_i}{2}\right)+\frac{\xoverline{\press_i\theta_i}}{\xoverline{\theta_i}}+\press_{\infty,i}\right)\xoverline{\textbf{v}}_i\cdot\textbf{n}
    \end{pmatrix*}
    -\beta_s\frac{\lb \alpha_i\rb}{2}
    \begin{pmatrix*}
    1\\ 
    \hat{\rho}_i\\ 
    \hat{\rho}_i\xoverline{\textbf{v}}_i \\ 
    \displaystyle\hat{\rho}_i\left(\frac{\Cv_i}{\hat{\theta}_i}+\frac{\textbf{v}^-_i\cdot\textbf{v}^+_i}{2}\right) + \press_{\infty,i}
    \end{pmatrix*},
\end{equation*}
    \begin{equation*}
    \normalfont{\textbf{d}}^\pm(\vecu^-,\vecu^+,{\bf n}):=\frac{\lb\alpha_i\rb}{2}
    \begin{pmatrix}
    {\bf v}_{\mathrm{I}}^{\pm}\cdot\textbf{n}\\
    0\\ 
    -\pI^{\pm}\n\\ 
    -\pI^\pm{\bf v}_{\mathrm{I}}^{\pm}\cdot\textbf{n}
    \end{pmatrix},\quad i\in \{1,2\}.
\end{equation*}
\end{subequations}
\end{linenomath*}

The entropy stable fluxes read
\begin{linenomath*}
\begin{equation*}
    \normalfont{\textbf{D}}^\pm(\vecu^-,\vecu^+,\n)= \textbf{D}^\pm_{ec}(\vecu^-,\vecu^+,\n) \pm \textbf{D}_\nu(\vecu^-,\vecu^+,\n),
\end{equation*}
\end{linenomath*}
with
\begin{linenomath*}
\begin{equation*}
    \normalfont{\textbf{D}}_\nu(\vecu^-,\vecu^+,\n) = \frac{\epsilon_\nu}{2}\max\big(\rho_\textbf{A}(\vecu^-,\n),\rho_\textbf{A}(\vecu^+,\n)\big)
    \begin{pmatrix}
    0\\ 
    \lb\rho_i\rb \\ 
    \lb\rho_i \textbf{v}_i\rb\\  
    \displaystyle\left(\frac{\Cv_i}{\hat{\theta}_i}+\frac{\textbf{v}^-_i\cdot\textbf{v}^+_i}{2}\right)\lb\rho_i\rb+\xoverline{\rho_i}\lb E_i\rb
    \end{pmatrix}, \quad i\in \{1,2\},
\end{equation*}
\end{linenomath*}
where $\epsilon_\nu\geqslant0$ and $\rho_\textbf{A}(\vecu,\n)=\max_{i=1,2}(|{\bf v}_i\cdot{\bf n}|+c_i)$.

\section{Condition for positivity of the cell-averaged solution in multiple space dimensions}\label{Appendix: Positivity in multiD}
The condition for positivity of the solution is based on the extension of \cref{thm: positivity}. We introduce $\lambda_x=\tfrac{\Delta t}{h_x}$ and $\lambda_y=\tfrac{\Delta t}{h_y}$ with $\Delta t>0$ the time step. Let $\rho^{0\leqslant k,l\leqslant p,n}_{i,j} >0, 1>\alpha^{0\leqslant k,l\leqslant p,n}_{i,j}>0$, then the cell-averaged partial densities and void fraction are positive, at time $t^{(n+1)}$, under the following CFL condition: 

\begin{linenomath*}
\begin{equation}\label{eq:2D_CFL_cond}
    \begin{aligned}
        (\lambda_x+\lambda_y) \max_{\kappa\in\Omega_h} \max_{{\bf u}={\bf u}_1,{\bf u}_2} \max_{0\leqslant m\leqslant p} \Bigg( &\max_{0\leqslant k\leqslant p} \frac{1}{\omega_k} \Bigg(\sum_{l=0}^p \omega_lD_{lk} \uI_{i,j}^{lm} + \delta_{kp}\frac{\beta_{s_{i+1/2}}^m-\uI^{pm}_{i,j}}{2} + \delta_{k0}\frac{\beta_{s_{i-1/2}}^m+\uI^{0m}_{i,j}}{2}\Bigg),\\
				&\max_{0\leqslant l\leqslant p} \frac{1}{\omega_l} \Bigg(\sum_{k=0}^p \omega_kD_{kl} \vI_{i,j}^{mk} + \delta_{lp}\frac{\beta_{s_{j+1/2}}^m-\vI^{mp}_{i,j}}{2} + \delta_{l0}\frac{\beta_{s_{j-1/2}}^m+\vI^{m0}_{i,j}}{2}\Bigg),\\
				&\frac{1}{\omega_0} \Big(\frac{(\beta_{s_{i-1/2}}^m-\xoverline{u}_{i-1/2}^m)\hat{\rho}_{i-1/2}^m}{2\rho^{0m}_{i,j}} + \frac{\epsilon_{\nu_{i-1/2}}^m}{\alpha^{0m}_{i,j}}\Big), \frac{1}{\omega_p} \Big(\frac{(\beta_{s_{i+1/2}}^m+\xoverline{u}_{i+1/2}^m)\hat{\rho}_{i+1/2}^m}{2\rho^{pm}_{i,j}} + \frac{\epsilon_{\nu_{i+1/2}}^m}{\alpha^{pm}_{i,j}}\Big), \\
				&\frac{1}{\omega_0} \Big(\frac{(\beta_{s_{j-1/2}}^m-\xoverline{v}_{j-1/2}^m)\hat{\rho}_{j-1/2}^m}{2\rho^{m0}_{i,j}} + \frac{\epsilon_{\nu_{j-1/2}}^m}{\alpha^{m0}_{i,j}}\Big), \frac{1}{\omega_p} \Big(\frac{(\beta_{s_{j+1/2}}^m+\xoverline{v}_{j+1/2}^m)\hat{\rho}_{j+1/2}^m}{2\rho^{mp}_{i,j}} + \frac{\epsilon_{\nu_{j+1/2}}^m}{\alpha^{mp}_{i,j}}\Big)\Bigg)^{(n)} < \frac{1}{2},
    \end{aligned}
\end{equation}
\end{linenomath*}
where 
\begin{linenomath*}
\begin{align*}
    \beta_{s_{i+1/2}}^m &= \max_{i_p=1,2}\big(|u^{pm,n}_{i_p,i,j}|, |u^{0m,n}_{i_p,i+1,j}|\big), \quad \beta_{s_{j+1/2}}^m = \max_{i_p=1,2}\big(|v^{mp,n}_{i_p,i,j}|,|v^{m0,n}_{i_p,i,j+1}|\big), \quad 0\leqslant m \leqslant p, \\
		\xoverline{u}_{i+1/2}^m &= \tfrac{u^{pm,n}_{i,j}+u^{0m,n}_{i+1,j}}{2}, \quad \xoverline{v}_{j+1/2}^m = \tfrac{v^{mp,n}_{i,j}+v^{m0,n}_{i+1,j}}{2}, \quad 0\leqslant m \leqslant p, \\
		\hat{\rho}_{i+1/2}^m &= \tfrac{\rho^{0m,n}_{i+1,j}-\rho^{pm,n}_{i,j}}{\ln\rho^{0m,n}_{i+1,j}-\ln\rho^{pm,n}_{i,j}}, \quad  \hat{\rho}_{j+1/2}^m = \tfrac{\rho^{m0,n}_{i,j+1}-\rho^{mp,n}_{i,j}}{\ln\rho^{m0,n}_{i,j+1}-\ln\rho^{mp,n}_{i,j}},\quad 0\leqslant m \leqslant p,
\end{align*}
\end{linenomath*}
where $u$, $v$, and $\rho$ refer either to phase ${\bf u}_1$, or to ${\bf u}_2$ in (\cref{eq:2D_CFL_cond}).

\bibliographystyle{siam}
\bibliography{main}

\end{document}